\theoremstyle{plain}
\newtheorem{theorem}{Theorem}[section]
\newtheorem{claim}{Claim}[section]
\newtheorem{corollary}{Corollary}[section]
\newtheorem{definition}{Definition}[section]
\newtheorem{example}{Example}[section]
\newtheorem{lemma}{Lemma}[section]
\newtheorem{proposition}{Proposition}[section]
\newtheorem{remark}{Remark}[section]
\newtheorem{theorema}{Theorem}[subsection]
\newtheorem{question}{Question}
\newcommand{\bc}{\begin{center}}
	\newcommand{\ec}{\end{center}}
\def\R{{\mathbb{R}}}
\def\||{\parallel}
\begin{document}
	
	\title[Uniqueness of Equil. Measure for a Family of Part. Hyperb. Horseshoes]{Uniqueness of Equilibrium Measure for a Family of Partially Hyperbolic Horseshoes}

	
	
	\author[K. Oliveira]{Krerley Oliveira}
	\address{Krerley Oliveira, Instituto de Matem\'atica, Universidade Federal de Alagoas, 57072-090 Macei\'o, Brazil} \email{krerley@gmail.com}
	
	\author[M. Oliveira]{Marlon Oliveira}
	\address{Marlon Oliveira, Departamento de Matem\'atica e Inform\'atica, Universidade Estadual do Maranh\~ao,  65055-310  S\~ao Lu\'is, Brazil} \email{marlon.cs.oliveira@gmail.com}
	
	\author[E. Santana]{Eduardo Santana}
	\address{Eduardo Santana, Universidade Federal de Alagoas, 57200-000 Penedo, Brazil}
	\email{jemsmath@gmail.com}
	


	\date{\today}

	
	
	\maketitle
	\begin{abstract}
		In this paper, we show the uniqueness of equilibrium state for a family of partially hyperbolic horseshoes, introduced in \cite{DHRS} for some classes of continuous potentials. For the first class, the method used here is making use of the Sarig's theory for countable shifts. For this purpose, we study the dynamics of an induced map associated to the horseshoe map, we build a symbolic system with infinitely many symbols that is topologically conjugated to this induced map and we show that the induced potential is locally H\"{o}lder and recurrent. For the second class, by following ideas of \cite{RS2} and \cite{RS}, we prove that uniqueness for the horseshoe is equivalent to uniqueness for the restriction to a non-uniformly expanding map, which is a hyperbolic potential and then has uniqueness. Both classes include potentials with high variation, differently from previous results for potentials with low variation. We also prove uniqueness when the potential presents its supremum at a special fixed point and less than the pressure.
	\end{abstract}

	\bigskip


	\section{Introduction}\label{Introduction}
	\vspace{0.50cm}
	
	The theory of equilibrium states on dynamical systems was firstly developed by Sinai, Ruelle and Bowen in the sixties and seventies. 
	It was based on applications of techniques of Statistical Mechanics to smooth dynamics.
	Given a continuous map $f: M \to M$ on a compact metric space $M$ and a continuous potential $\phi : M \to \mathbb{R}$, an  {\textit{equilibrium state}} is an 
	invariant measure that satisfies a variational principle, that is, a measure $\mu$ such that
	$$
	\displaystyle h_{\mu}(f) + \int \phi d\mu = \sup_{\eta \in \mathcal{M}_{f}(M)} \bigg{\{} h_{\eta}(f) + \int \phi d\eta \bigg{\}},
	$$
	where $\mathcal{M}_{f}(M)$ is the set of $f$-invariant probabilities on $M$ and $h_{\eta}(f)$ is the so-called metric entropy of $\eta$.

	In the context of uniform hyperbolicity, which includes uniformly expanding maps, equilibrium states do exist and are unique if the potential is H\"older continuous
	and the map is transitive. In addition, the theory for finite shifts was developed and used to achieve the results for smooth dynamics.
	
	Beyond uniform hyperbolicity, the theory is still far from complete. It was studied by several authors, including Bruin, Keller, Demers, Li, Rivera-Letelier, Iommi and Todd 
	\cite{BK,IT1,IT2,LRL2011,LRL2014} for interval maps; Denker and Urbanski  \cite{DU} for rational maps; Leplaideur, Oliveira and Rios 
	\cite{LOR} for partially hyperbolic horseshoes; Buzzi, Sarig and Yuri \cite{BS,Y}, for countable Markov shifts and for piecewise expanding maps in one and higher dimensions. 
	For local diffeomorphisms with some kind of non-uniform expansion, there are results due to Oliveira \cite{O}; Arbieto, Matheus and Oliveira \cite{AMO};
	Varandas and Viana \cite{VV}. All of whom proved the existence and uniqueness of equilibrium states for potentials with low oscillation. Also, for this type of maps,
	Ramos and Viana  \cite{RV}  proved it for potentials so-called \textit{hyperbolic}, which includes the previous ones. The hyperbolicity of the potential is
	characterized by the fact that the pressure emanates from the hyperbolic region. In all these studies the maps do not have the presence of critical sets and recently, Alves, Oliveira and Santana proved the existence of at most finitely many equilibrium states for hyperbolic potentials, possible with the presence of a critical set (see \cite{AOS}). More recently, Santana completed this by showing uniqueness in \cite{S}. Moreover, in this work it is proved that the class of hyperbolic potentials is equivalent to the class of continuous zooming potential (which satisfies a key inequality between free energies). It includes the null potential, which implies existence and uniqueness of measure of maximal entropy. A similar work is developed in \cite{PV}, where they deal with expanding measures and potentials.
	
	In this work, we deal with equilbrium states in the context of partially hyperbolic dynamics. More precisely, for a special family of partially hyperbolic horseshoes introduced by \textit{Diaz et al} in \cite{DHRS}. For this family, \textit{Leplaideur, Oliveira, Rios} proved in \cite{LOR} that for any continuous potential we have existence of equilibrium states and for a residual subset of continuous potentials they obtained uniqueness. After that, \textit{Arbieto, Prudente} proved uniqueness in \cite{AP} for a class of H\"{o}lder potentials that are constant along the center-stable direction. Later on, \textit{Rios, Siqueira} obtained uniqueness in \cite{RS} for H\"{o}lder potentials with low variation and constant along the unstable direction. Finally, \textit{Ramos, Siqueira} proved the uniqueness of equilibrium state for this horseshoe class and H\"{o}lder potentials with low variation without restrictions in the directions. 
	
	Our results prove uniqueness for classes of continuous potentials with high variation with some restrictions. First, we prove it for what we call \textit{admissible potentials}, where the corresponding potentials for the subshift associated to the horseshoe are locally 
	H\"{o}lder. We create a countable alphabet over the subshift and a full shift to use the Sarig's theory on thermodynamic formalism for countable shifts.  We also prove uniqueness for what we call \textit{projective hyperbolic potentials}, with a condition for the restriction to a non-uniformly expanding map which implies hyperbolicity. We do it by proving that the condition implies that the restriction is an expanding potential and based on a result in 
	\cite{AOS} we have equivalence with hyperbolic potentials. So, by using a result in  \cite{RV} we obtain uniqueness. 

    To obtain all this, we prove that the uniqueness of equilibrium state for the horseshoe is equivalent to the uniqueness for the restriction. Moreover, we also give a result for uniqueness when the potential gives supremum at very important fixed point and it is less than the pressure.
	
	\section{Setup and Main Results}
	
	We emphasize that the question concerning the uniqueness of equilibrium states and phase transitions for partially hyperbolic maps is wide open. In this paper, we make a contribution to this theme by proving the uniqueness of equilibrium states for a very interesting family of partially hyperbolic horseshoes, introduced by \emph{Diaz et al} in \cite{DHRS} and two classes of continuous potentials. We reproduce here the ideas for the construction of this family of horseshoes. Consider the cube $[0,1]\times[0,1]\times[0,1]$, the parallelepipeds:
	
	\[
	R_{0}:=I\times I\times[0,1/6]   \      \      \text{and}     \       \    R_{1}:=I\times I\times[5/6,1]
	\]
	and constants $0<\lambda_{0}<\frac{1}{3}$, $\beta_{0}>6$, $0<\sigma <
	\frac{1}{3}$ and $3<\beta_{1}<4$. Let $f$ be the  time-one map of the vector field $y'=(1-y)y$, i.e.,   $f$ is defined by
	\[
	f^{n}(y)=\frac{1}{1-(1-\frac{1}{y})e^{-n}},
	\]
	for every $n\in\mathbb{Z}$ and $y\neq 0$. Define the family of \textit{horseshoes maps} $F=F_{\lambda_0,\beta_0, \beta_1,\sigma}:R_0\cup R_1 \rightarrow \mathbb{R}^{3}$  by
	\[
	F(x,y,z)=(\lambda_{0}x,f(y),\beta_{0}z),
	\]
	whenever $(x,y,z)\in R_{0}$ and
	\[
	F(x,y,z)=\left(\frac{3}{4}-\lambda_{0}x,\sigma(1-y),\beta_{1}(z-\frac{5}{6})\right),
	\] for $(x,y,z)\in R_{1}$.
	
	Let $\Lambda$ be the maximal invariant set 
	$$	\Lambda=\cap_{n\in\mathbb{Z}}F^{n}(R_0\cup R_1).$$

	The set $\Lambda$ is a partially hyperbolic set  and has an heterodimensional cycle, i.e., the points $Q=(0,0,0)$ and $P=(0,1,0)$ are saddles of $F$ with indices 2 and 1 respectively, and such that
	\[
	W^{s}(P)\cap W^{u}(Q)\neq \emptyset   \    \     \   \text{and}   \      \      \     W^{s}(Q)\cap W^{u}(P)\neq \emptyset.
	\]
	The homoclinic class of $P$, defined by $H(P,F)= \overline{W^{s}(P)\cap W^{u}(P)}$, coincides with $\Lambda$ and contains the homoclinic class of the point $Q$ that is trivial, i.e.,   
	$\Lambda=H(P,F) \supset H(Q,F) = \{Q\}$.
	
	Given $X=(x^s,x^c,x^u)\in\Lambda$, we consider the \emph{central manifold}
	\[
	W^{c}(X)=\left\{(x_{s},y,x_{u}); \ y\in[0,1]\right\},
	\]
	tangent to the central subspace $E^c$ at $X$ and the invariant subset
	\[
	\hat{\Lambda}=\{X\in\Lambda \ ; \ \Lambda\cap W^{c}(X)=\{X\} \},
	\]
	composed by the points whose central manifold is trivial. The set $\hat{\Lambda}^c$ can be decomposed as disjoint uncountable union of these segments and in addition the sets $\hat{\Lambda}$ and $\hat{\Lambda}^c$ are dense on $\Lambda$. We summarize these results in the following theorem:
	\begin{theorem}\label{t.DHRS}(\cite{DHRS}, Theorem 1.)
		Consider $F:R\rightarrow \mathbb{R}^{3}$ the horseshoe map, then
		\begin{enumerate}
			\item The diffeomorphism $F$ has a heterodimensional cycle associated with the saddles $P$ and $Q$. Moreover, there exists
			a non-transverse intersection between $W^{s}(Q)$ and $W^{u}(P)$ whose orbit is contained in $H(P,F)$.
			\item The homoclinic class of $Q$ is trivial and contained in the non-trivial homoclinic class of $P$. In particular, $H(P,F)$
			is not hyperbolic.
			 For $\Sigma_{11}$ the subshift of 
				finite type where only the transition $1 \to 1$ is forbidden, there is a surjection
			\[
			h:\Lambda\rightarrow\Sigma_{11}     \   \   \   \    \    \textrm{with}      \   \   \   \    \    \   h\circ F=\sigma\circ h.
			\]
			Moreover, there exist infinitely many non-trivial central segments $J=\{x\}\times[a,b]\times\{z\}$ contained in the homoclinic class $H(P,F)$ and $h$ is constant on $J$.
		\end{enumerate}
	\end{theorem}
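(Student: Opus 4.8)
The plan is to read everything off the explicit formula for $F$, handling the three coordinates separately. In every branch the $x$-coordinate is contracted with rate $\lambda_0<\tfrac13$ (and on $R_1$ also reflected and translated), the $z$-coordinate is dilated (rates $\beta_0>6$ on $R_0$ and $\beta_1\in(3,4)$ on $R_1$) and translated, and the $y$-coordinate evolves by the orientation-preserving bijection $f$ of $[0,1]$ on $R_0$ and by the affine contraction $y\mapsto\sigma(1-y)$, $\sigma<\tfrac13$, on $R_1$. Since $f$ is the time-one map of $y'=y(1-y)$ one has $f(0)=0$, $f(1)=1$, $f'(0)=e$ and $f'(1)=e^{-1}$; hence $Q=(0,0,0)$ and $P=(0,1,0)$ lie in $R_0$ and are fixed by $F$, with $DF_Q=\operatorname{diag}(\lambda_0,e,\beta_0)$ and $DF_P=\operatorname{diag}(\lambda_0,e^{-1},\beta_0)$. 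As $\lambda_0<1<\beta_0$ this gives $\dim W^u(Q)=2$ and $\dim W^u(P)=1$, i.e.\ indices $2$ and $1$, so $P$ and $Q$ are hyperbolic of distinct index. One leg of the cycle is visible at once: the central segment $J_0=\{0\}\times[0,1]\times\{0\}$ is $F$-invariant and $F$ acts on it by $f$; since $f^{-n}\to0$ and $f^{n}\to1$ pointwise on $(0,1)$, every point of the open sub-segment $\{0\}\times(0,1)\times\{0\}$ has backward orbit converging to $Q$ and forward orbit converging to $P$, whence $W^u(Q)\cap W^s(P)\neq\emptyset$.

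The other leg is the subtle point, and it is exactly what the constants $3/4$, $5/6$, $\lambda_0$, $\beta_0$, $\beta_1$ are tuned for. One follows $W^u_{\mathrm{loc}}(P)=\{0\}\times\{1\}\times[0,1/6]$ under forward iteration: while it stays in $R_0$ it remains on the line $\{x=0,\ y=1\}$ and its $z$-coordinate is multiplied by $\beta_0>6$, so after finitely many steps it re-enters the cube through $R_1$ (its $z$-coordinate landing in $[5/6,1]$); applying $F|_{R_1}$ sends $(0,1,z)$ to $\bigl(\tfrac34,\,0,\,\beta_1(z-\tfrac56)\bigr)$, placing the arc on $\{x=\tfrac34,\ y=0\}$, and one then checks --- this is the explicit computation the parameters are chosen to pass --- that further iteration carries it onto the $x$-axis $W^s_{\mathrm{loc}}(Q)=[0,1]\times\{0\}\times\{0\}$. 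Because $W^u(P)$ and $W^s(Q)$ are one-dimensional in a three-dimensional manifold, any intersection of them is automatically non-transverse; tracking its itinerary shows the connecting orbit never leaves $R_0\cup R_1$ and is a limit of homoclinic orbits of $P$, hence lies in $H(P,F)$. Combined with the previous paragraph this gives a heterodimensional cycle between $P$ and $Q$, which is assertion (1).

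For (2) I would set up the coding. Define $h(X)_n=i$ whenever $F^n(X)\in R_i$ (well defined on $\Lambda$ since $R_0,R_1$ are disjoint); then $h\circ F=\sigma\circ h$ is immediate. Looking only at the $z$-coordinate, if $F^n(X)\in R_1$ its $z$-coordinate lies in $[5/6,1]$, so the $z$-coordinate of $F^{n+1}(X)$ equals $\beta_1(z-\tfrac56)\le\beta_1/6<\tfrac23<\tfrac56$ and $F^{n+1}(X)\notin R_1$; thus $11$ never occurs, while $\beta_0>6$ and $\beta_1>3$ make the $z$-overlaps producing $00$, $01$ and $10$ non-empty, so $h(\Lambda)\subseteq\Sigma_{11}$. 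Surjectivity onto $\Sigma_{11}$ is the standard horseshoe argument: for each admissible finite word the set of points of $\Lambda$ realizing it over the corresponding window is a non-empty product set whose $x$- and $z$-extents shrink geometrically (rates $\lambda_0$ forward and $\beta_i^{-1}$ backward, all $<\tfrac13$) while the $y$-extent stays inside $[0,1]$; by compactness the nested intersection over all windows is non-empty, yielding a point with the prescribed bi-infinite itinerary. Hence $h:\Lambda\to\Sigma_{11}$ is a surjection semiconjugating $F|_\Lambda$ to the full irreducible subshift $\sigma|_{\Sigma_{11}}$, injectivity failing precisely along the central segments.

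It remains to treat $H(Q,F)$, the non-hyperbolicity of $H(P,F)$, and the central segments. For $H(Q,F)=\overline{W^u(Q)\cap W^s(Q)}$: locally $W^u(Q)$ sits in $\{x=0\}$ and $W^s(Q)$ on the $x$-axis $\{y=z=0\}$, and following the formulas one checks that the only return of one of these manifolds to a neighbourhood of the other is at $Q$ itself --- a homoclinic point of $Q$ would have to keep $x=z=0$ throughout an entire block of $0$'s while the $y$-coordinate simultaneously converges to a fixed point of $f$, forcing the point to be $Q$. So $H(Q,F)=\{Q\}\subseteq H(P,F)$, and $H(P,F)=\Lambda$ is non-trivial (it contains $Q$, $P$ and $J_0$). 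Non-hyperbolicity is then immediate from the neutral direction: at $X=(0,y,0)\in J_0$ with $y\in(0,1)$ one has $DF^{n}(X)e_y=(f^n)'(y)\,e_y$ with
$(f^n)'(y)=\dfrac{e^{-n}/y^2}{\bigl(1-(1-\tfrac1y)e^{-n}\bigr)^2}\longrightarrow 0$
both as $n\to+\infty$ and as $n\to-\infty$; a non-zero vector contracted in both time directions cannot belong to a hyperbolic splitting $E^s\oplus E^u$, so $\Lambda=H(P,F)$ is not hyperbolic. Finally, since $h$ collapses only the $y$-direction, the fibre $h^{-1}(a)$ is a segment $\{x_0\}\times[y^-,y^+]\times\{z_0\}$, and it is non-degenerate when $a$ contains long blocks of $0$'s, because along such a block $f$ re-inflates the $y$-interval back toward $[0,1)$ while an intervening $1$ contracts it only by the fixed factor $\sigma$; taking $a=\overline{1\,0^N}$ one obtains, for each large $N$, a distinct non-trivial central segment $J=\{x\}\times[c_N,d_N]\times\{z\}\subseteq\Lambda=H(P,F)$ on which $h$ is constant by construction. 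The main obstacle is the second paragraph: verifying that the chosen parameters really carry the one-dimensional $W^u(P)$ around to meet the one-dimensional $W^s(Q)$ and that the connecting orbit lies in the homoclinic class; the remaining points are bookkeeping with the piecewise-affine formula for $F$.
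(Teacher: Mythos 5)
The paper does not prove this statement at all --- it is quoted verbatim from \cite{DHRS}, Theorem 1 --- so your attempt has to stand as a proof from scratch. Its elementary parts are correct and completable: the fixed points, the derivatives $\mathrm{diag}(\lambda_0,e,\beta_0)$, $\mathrm{diag}(\lambda_0,e^{-1},\beta_0)$ and the indices; the leg $W^u(Q)\cap W^s(P)\supset\{0\}\times(0,1)\times\{0\}$; the other leg, where following $\{0\}\times\{1\}\times[0,1/6]$ through $R_1$ gives $F^{n+1}(0,1,\tfrac56\beta_0^{-n})=(3/4,0,0)\in W^u(P)\cap W^s(Q)$, non-transverse by the dimension count $1+1<3$; the forbidden transition $1\to1$ and the surjectivity of $h$ via the $z$-crossing property; the triviality of $H(Q,F)$ via the $y$-dynamics; and the neutral-central-segment mechanism for non-hyperbolicity. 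But you flag the wrong step as the main obstacle: that tracking computation is routine, while every claim involving membership in $H(P,F)$ is asserted, not proved --- that the non-transverse heteroclinic orbit lies in $H(P,F)$, that $Q\in H(P,F)$ (needed for ``contained in the non-trivial homoclinic class of $P$''), that $H(P,F)=\Lambda$, and that the central segments lie in $H(P,F)$. Without at least one of these, your final argument only shows that $\Lambda$ is not hyperbolic, not that $H(P,F)$ is. Producing transverse homoclinic points of $P$ that accumulate on these orbits and segments is precisely the substantive content of \cite{DHRS}; it is not bookkeeping.

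There is also an outright error in the last step: the fibres over the periodic itineraries $a=\overline{1\,0^N}$ are trivial, not non-trivial central segments. Item (2) of Theorem~\ref{t.LOR}, quoted in this very paper from \cite{LOR}, states that if $h(F^n(X))\in[1\underbrace{0\dots0}_{k}1]$ for infinitely many $n\in\mathbb{Z}$ and some fixed $k$, then $\Lambda\cap W^c(X)=\{X\}$; the sequence $\overline{1\,0^N}$ meets $[1\,0^N1]$ along an arithmetic progression of times, so its fibre is a single point. Your heuristic --- that the block $0^N$ ``re-inflates'' the $y$-interval while a symbol $1$ only contracts by $\sigma$ --- fails quantitatively: the expansion of $f^N$ is confined to a neighbourhood of $y=0$ of size comparable to $e^{-N}$, and Lemma~\ref{l.LOR} together with Remark~\ref{contracSIF} shows that each occurrence of a fixed block $0^k1$ contributes a definite contraction factor $\theta<1$ to the central derivative, so any itinerary in which some fixed block type recurs infinitely often has a trivial fibre. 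The non-trivial segments have to be obtained along itineraries whose blocks of zeros grow (this is how \cite{DHRS} produces them, through the heterodimensional cycle), and one must then additionally show that those segments are accumulated by homoclinic points of $P$.
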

	
	The next lemma gives us important information about the invariant measures $\delta_{Q}$ and $\delta_{P}$.
	
	\begin{lemma}\label{Red. Med.}
		If $\mu$ is an $F$-invariant measure such that $\mu(R_{1})=0$, then $\mu$ is  one of the Dirac measures supported in the points
		$P$ and $Q$ or a convex combination of these measures.
	\end{lemma}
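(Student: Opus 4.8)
The plan is to use $F$-invariance together with $\mu(R_1)=0$ to force $\mu$ onto the central segment joining $P$ and $Q$, and then to exploit the fact that along that segment $F$ is conjugate to the time-one map of the flow $y'=(1-y)y$, which has no recurrence away from its two fixed points.

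\textbf{Step 1 (pushing $\mu$ onto the orbits that stay in $R_0$).} Since $R_0\cap R_1=\emptyset$ and the $F$-invariant measure $\mu$ is carried by the maximal invariant set $\Lambda\subset R_0\cup R_1$, the hypothesis $\mu(R_1)=0$ gives $\mu(R_0)=1$. The map $F$ is invertible on $\Lambda$: it is injective on $R_0\cup R_1$ because $F(R_0)$ lies in $\{x\le\lambda_0\}$ while $F(R_1)$ lies in $\{x\ge 3/4-\lambda_0\}$, and $\lambda_0<1/3<3/8$. Hence invariance yields $\mu(F^{-n}(R_0))=1$ for every $n\in\mathbb{Z}$, and intersecting countably many full-measure sets gives $\mu(\Lambda_0)=1$, where $\Lambda_0:=\bigcap_{n\in\mathbb{Z}}F^{-n}(R_0)$ is the set of points of $\Lambda$ whose entire bi-infinite orbit remains in $R_0$.

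\textbf{Step 2 (identifying $\Lambda_0$).} On $R_0$ one has $F^{n}(x,y,z)=(\lambda_0^{\,n}x,\,f^{n}(y),\,\beta_0^{\,n}z)$ for $n\ge 0$; since $\beta_0>6$, the third coordinate of the forward orbit can remain in $[0,1/6]$ only if $z=0$. With $z=0$ the backward iterates are $F^{-m}(x,y,0)=(\lambda_0^{-m}x,\,f^{-m}(y),\,0)$, and since $\lambda_0<1/3$ the first coordinate stays in $[0,1]$ only if $x=0$. Conversely $F$ preserves $\{0\}\times[0,1]\times\{0\}$ because $f$ preserves $[0,1]$. Thus $\Lambda_0=\{0\}\times[0,1]\times\{0\}$, the map $(0,y,0)\mapsto y$ conjugates $F|_{\Lambda_0}$ to $f:[0,1]\to[0,1]$, and under this conjugacy $Q$ and $P$ correspond to the fixed points $0$ and $1$.

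\textbf{Step 3 (invariant measures of $f$ on $[0,1]$).} Let $\nu$ be the image of $\mu$ under $(0,y,0)\mapsto y$, an $f$-invariant probability on $[0,1]$. Since $f$ is the time-one map of $y'=(1-y)y$ we have $f(y)\ge y$ on $[0,1]$ with equality exactly at $y\in\{0,1\}$. Invariance gives $\int\big(f(y)-y\big)\,d\nu(y)=\int y\,d(f_*\nu)(y)-\int y\,d\nu(y)=0$, and as the integrand is nonnegative it must vanish $\nu$-almost everywhere; hence $\nu\big((0,1)\big)=0$, so $\nu=a\,\delta_0+(1-a)\,\delta_1$ for some $a\in[0,1]$ (equivalently, by Poincar\'e recurrence, since the non-wandering set of $f$ on $[0,1]$ is $\{0,1\}$). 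Pulling back, $\mu=a\,\delta_Q+(1-a)\,\delta_P$, which is the assertion. I do not expect a real obstacle here: the only mildly delicate points are the justification of the countable intersection in Step~1 and the explicit determination of $\Lambda_0$ in Step~2, both of which are routine.
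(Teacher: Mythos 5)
Your proof is correct, and it follows the same overall reduction as the paper (force the measure onto the central segment $\{0\}\times[0,1]\times\{0\}$, then use that the only recurrence of $f$ there is at the fixed points $0$ and $1$), but the implementation differs in two ways. The paper first assumes $\mu$ ergodic, argues with the \emph{support}: $\operatorname{supp}\mu$ is a closed invariant subset of $R_0$, and since every point of $R_0$ off the central curve eventually leaves $R_0$, the support lies in $\{0\}\times[0,1]\times\{0\}$; then, because every point of $\{0\}\times(0,1]\times\{0\}$ is attracted to $P$, the only ergodic possibilities are $\delta_Q$ and $\delta_P$, and the general statement follows from the ergodic decomposition theorem. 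You instead treat an arbitrary invariant measure at once: the countable intersection $\mu\bigl(\bigcap_{n\in\mathbb Z}F^{-n}(R_0)\bigr)=1$ replaces the support argument (using injectivity of $F$ on $R_0\cup R_1$, which you verify), and on the segment you replace the attraction/recurrence step and the ergodic decomposition by the monotonicity identity $\int\bigl(f(y)-y\bigr)\,d\nu=0$ with $f(y)\ge y$ and equality only at $\{0,1\}$, which directly kills the mass of $(0,1)$. What each buys: the paper's argument is shorter and more dynamical but leans on ergodic decomposition and an unproved (though easy) attraction claim; yours is more elementary and self-contained, handles non-ergodic measures without decomposition, and makes the identification of the bi-infinitely $R_0$-confined set explicit via the rates $\beta_0>6$ and $\lambda_0<1/3$. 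No gaps of substance; the measurability of the forward images $F^{m}(R_0)$ in your Step 1 is routine, as you note, since these are continuous injective images of compact sets.
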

	\begin{proof}
		Given an ergodic $F$-invariant measure $\mu$ such that $\mu(R_{1})=0$, we have supp$\mu \subset R_{0}$, and as the set supp$\mu$ is invariant, we conclude that it must be contained in $\{0\}\times[0,1]\times\{0\}$, since any point in $R_0$ and outside this central curve, eventually leaves  $R_0$ under $F$-iterations.  Thus, using that any point of $\{0\}\times(0,1]\times\{0\}$ is  attracted  by $P$, $\mu$ must be $\delta_Q$ or $\delta_P$. If $\mu$ is an invariant measure such that $\mu(R_{1})=0$, by the Ergodic Decomposition Theorem we conclude  that $\mu$ must to be a convex combination of these two Dirac measures.
	\end{proof}

	The study of equilibrium states for the horseshoe map $F$ started in \cite{LOR}. In this work, the authors proved  that every ergodic invariant measure is \textit{hyperbolic} (in the sense that the Lyapunov exponents are all different from zero) and  there is a gap in the set of the central Lyapunov exponents. In addition, they showed that every recurrent point different from $Q$ and $P$ belongs to $\hat{\Lambda}$. As a consequence of these results, they obtained the existence of equilibrium state for every continuous potentials and uniqueness for a residual set of potentials. Moreover, they presented a family of continuous potentials H\"{o}lder continuous that has a \textit{phase transition}. More precisely, the following statement holds.
	
	\begin{theorem}\label{t.LOR}(\cite{LOR}, Theorem 2.1, 2.2 , Proposition 3.3, Remark 4.)
		
		Let $F$ be partially hyperbolic horseshoe map. Then
		\begin{enumerate}
			\item For any recurrent point $X$ different from $Q$:
			\[
			\liminf_{n\rightarrow +\infty}\frac{1}{n}\log|DF^{n}(X)|_{E^{c}}|\leq0.
			\]
			Moreover, any ergodic invariant measure for $F$ different from $\delta_{Q}$ has negative central Lyapunov exponent.	
			\item Let $X$ be a point in $\Lambda$. If there exists some natural number $k$ such that $h(F^{n}(X))\in[1\!\underbrace{0...0}_{k}\!1]$ for infinitely many $n\in\mathbb{Z}$, then
			\[
			\Lambda\cap W^{c}(X)=\{X\}.
			\]	
			\item Any continuous potential $\phi:\Lambda\rightarrow \mathbb{R}$ admits an equilibrium state,
			and there exists a residual set in $C^{0}(\Lambda)$ such that the equilibrium state is unique.
			
			\item  The family of potentials  $\phi_{t}=t\log|DF|_{E^{c}}|$, for $t\in \mathbb{R}$, has a  \textit{phase transition}, i.e., there exists $t_{0} > 0$ such that $\phi_{t_{0}}$
			admits at least two different equilibrium states.
		\end{enumerate} 	
	\end{theorem}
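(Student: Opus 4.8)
I would prove the four assertions in turn, the first being the dynamical core and the others following from it by more standard arguments. For item (1), write $\pi_c$ for the central ($y$) coordinate. On $R_0$ one has $\pi_c\circ F=f(\pi_c)$ and $|DF|_{E^c}|=f'(\pi_c)$, while on $R_1$ one has $\pi_c\circ F=\sigma(1-\pi_c)$ and $|DF|_{E^c}|=\sigma<1/3$. Since $f$ is the time-one map of $y'=y(1-y)$, the explicit formula $f^m(y)=(1-(1-1/y)e^{-m})^{-1}$ gives, after differentiation, $(f^m)'(y)=e^{-m}\big((1-e^{-m})y+e^{-m}\big)^{-2}$, together with the cohomological identity $\log f'(y)=\log f(y)+\log(1-f(y))-\log y-\log(1-y)$ for $y\in(0,1)$; in particular, for fixed $y>0$, a sojourn of length $m$ inside $R_0$ contracts the central direction like $e^{-m}$. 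Now let $X\ne Q$ be recurrent. If the forward orbit of $X$ is eventually contained in $R_0$, then $F^N(X)$ lies on $\{z=0\}$ (otherwise the unstable coordinate escapes $[0,1/6]$), and since such an orbit with central coordinate $0$ converges to $Q$, which would force $X=Q$, we must have $X=P$, so $\frac1n\log|DF^n(X)|_{E^c}|\to\log f'(1)=-1\le 0$. If instead the orbit returns to $R_1$ infinitely often, telescoping the cohomological identity over the $R_0$-blocks and adding the $\log\sigma$ contributions of the $R_1$-blocks shows that the Birkhoff averages $\frac1n\log|DF^n(X)|_{E^c}|$ cannot be bounded below by a positive constant: a nonnegative lower limit would require the orbit to spend an asymptotically full-density set of times with central coordinate near $0$, i.e.\ near $Q$, which for a recurrent point again means $X=Q$. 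For the measure statement, an ergodic $\mu\ne\delta_Q$ with $\mu(R_1)=0$ equals $\delta_P$ by Lemma \ref{Red. Med.}, so $\lambda^c(\mu):=\int\log|DF|_{E^c}|\,d\mu=\log f'(1)=-1<0$; if $\mu(R_1)>0$ one integrates the cohomological identity (the hypothesis $\mu\ne\delta_Q$ ensures $\mu$ has no mass on $\{y=0\}$, so $\log\pi_c\in L^1(\mu)$; mass near $\{y=1\}$ is controlled by comparison with $\delta_P$), uses $F$-invariance to cancel the telescoping terms, and obtains $\lambda^c(\mu)<0$. I expect the bookkeeping near the two saddles $P$ and $Q$, precisely where the cohomological identity degenerates, to be the main obstacle in (1).

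For item (2), suppose for contradiction that $J=\{x\}\times[a,b]\times\{z\}\subset\Lambda\cap W^c(X)$ with $a<b$. Since whether a central leaf lies in $R_0$ or $R_1$ depends only on its (leaf-constant) unstable coordinate and $F$ maps central leaves into central leaves, every point of $J$ has the same itinerary as $X$, and $F^n(J)\subset R_0\cup R_1$ for all $n\in\mathbb Z$. At each (say, forward) time $n$ at which the itinerary displays the block $1\,\underbrace{0\cdots0}_{k}\,1$ — an infinite set by hypothesis — the central interval $F^{n+1}(J)$ enters $R_0$ inside $[0,\sigma]$, and over the following $k$ steps in $R_0$ its length is multiplied by $(f^k)'$ evaluated there, which, using $(f^k)'(y)=e^{-k}\big((1-e^{-k})y+e^{-k}\big)^{-2}$ together with a lower bound on the central coordinate at which the block enters $R_0$ (coming from the preceding $R_1$-step $y\mapsto\sigma(1-y)$), is controlled by a factor bounded away from $1$. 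Since this happens infinitely often, the length of $J$ must be $0$, a contradiction; hence $\Lambda\cap W^c(X)=\{X\}$. The delicate point is to make the contraction factor uniform over the infinitely many occurrences, which requires also tracking the central position, not only the length, along each block.

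For item (3), since $F$ is smooth on a compact set the entropy map $\mu\mapsto h_\mu(F)$ is upper semicontinuous on the compact set $\mathcal M_F(\Lambda)$, so $\mu\mapsto h_\mu(F)+\int\phi\,d\mu$ is upper semicontinuous and attains its supremum; this gives existence of an equilibrium state for every continuous $\phi$. The topological pressure $P(\cdot)$ is a $1$-Lipschitz convex function on the separable Banach space $C^0(\Lambda)$, hence (Mazur) is G\^ateaux differentiable on a residual set; at a point $\phi$ of differentiability the tangent functional to the graph of $P$ is unique, so the equilibrium state is unique.

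For item (4), take $\phi_t=t\log|DF|_{E^c}|$ with $t\ge 0$. Every ergodic $\mu\ne\delta_Q$ satisfies $h_\mu(F)+t\lambda^c(\mu)\le h_\mu(F)\le h_{\mathrm{top}}(F)$ by item (1), while $\delta_Q$ gives $h_{\delta_Q}(F)+t\lambda^c(\delta_Q)=0+t\log f'(0)=t$; since $h_{\mathrm{top}}(F)>0$, it follows that $P(\phi_0)=h_{\mathrm{top}}(F)>0$ and that $P(\phi_t)=t$, attained only by $\delta_Q$, for all $t>h_{\mathrm{top}}(F)$. Thus the convex nonnegative function $t\mapsto P(\phi_t)-t$ vanishes exactly on a half-line $[t_0,\infty)$ with $t_0>0$. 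At $t_0$ the right derivative of $t\mapsto P(\phi_t)$ is $1$; for $t<t_0$ one has $P(\phi_t)>t$, so $\delta_Q$ is not an equilibrium state for $\phi_t$, every ergodic component of every equilibrium state of $\phi_t$ is $\ne\delta_Q$, hence $\int\log|DF|_{E^c}|\,d\mu<0$ for every equilibrium state $\mu$ of $\phi_t$, and therefore the right derivative of $t'\mapsto P(\phi_{t'})$ at $t$ — which equals the maximum of $\int\log|DF|_{E^c}|\,d\mu$ over equilibrium states of $\phi_t$ — is negative; letting $t\uparrow t_0$ shows the left derivative at $t_0$ is $\le 0<1$. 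Since the one-sided derivatives of $t\mapsto P(\phi_t)$ at $t_0$ are the minimum and the maximum of $\int\log|DF|_{E^c}|\,d\mu$ over the equilibrium states of $\phi_{t_0}$ and these differ, $\phi_{t_0}$ has at least two distinct equilibrium states.
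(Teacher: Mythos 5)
This theorem is not proved in the paper at all: it is quoted from \cite{LOR} (Theorems 2.1, 2.2, Proposition 3.3, Remark 4), so there is no in-paper argument to compare yours with, and I can only assess your attempt on its own terms. Items (3) and (4) follow the standard route and are essentially sound: existence plus residual uniqueness via upper semicontinuity of the entropy map, convexity of the pressure and Mazur's theorem, and the phase transition via the two one-sided derivatives of $t\mapsto P(\phi_t)$ at the first zero of $P(\phi_t)-t$. But both items hinge on upper semicontinuity of $\mu\mapsto h_\mu(F)$ (also needed to identify tangent functionals with equilibrium states), and your justification ``$F$ is smooth on a compact set'' is not an argument; in this setting one gets it either from Newhouse--Yomdin for $C^\infty$ maps or, more in the spirit of \cite{LOR}, from the semiconjugacy $h$ onto $\Sigma_{11}$ together with the fact that the fibres of $h$ (central segments) carry zero entropy.

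The genuine gaps are in items (1) and (2). In (1), the step ``a nonnegative lower limit would require full density of times with central coordinate near $0$, i.e.\ near $Q$, which for a recurrent point means $X=Q$'' fails twice: having central coordinate near $0$ is not the same as being near $Q$, and recurrence gives no control on densities (an orbit may spend density-one time near $Q$ and still accumulate on $X\neq Q$). Ironically, the identity you set up already yields the claim: with $u=\log[y(1-y)]$ one has $\log|DF|_{E^{c}}|=u\circ F-u+\psi$, where $\psi=0$ on $R_{0}$ and $\psi\leq 0$ on $R_{1}$, so $S_n:=\sum_{k=0}^{n-1}\log|DF|_{E^{c}}|(F^{k}X)\leq \log\tfrac14-u(X)$ whenever the central coordinate of $X$ lies in $(0,1)$ (a forward-invariant interval), giving even $\limsup\frac1n S_n\leq 0$ with no density argument. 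Likewise ``$\mu$ has no mass on $\{y=0\}$, so $\log\pi_c\in L^{1}(\mu)$'' is false as stated (absence of an atom does not give integrability); one needs instead, e.g., the lemma that a measurable coboundary bounded below integrates to zero, plus an argument that $\mu(\{y\in\{0,1\}\})=0$ for ergodic $\mu\neq\delta_Q,\delta_P$. In (2), the missing step is precisely the technical core of \cite{LOR} (its Lemma 3.1, quoted here as Lemma~\ref{l.LOR}, and Remark~\ref{contracSIF}): there is no ``lower bound on the central coordinate at which the block enters $R_{0}$ coming from the preceding $R_{1}$-step'', since $y\mapsto\sigma(1-y)$ only gives the upper bound $\sigma$, and when the previous central coordinate is close to $1$ the entering coordinate is close to $0$, so the factor over one occurrence of $[1\,0^{k}\,1]$ can be as large as $\sigma^{2}e^{k}>1$. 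The contraction only appears after compensating such expanding blocks against the strongly contracting history that produced a coordinate near $1$ --- exactly the bookkeeping you explicitly defer --- and the case where the block occurs only for negative $n$ (allowed by the hypothesis ``infinitely many $n\in\mathbb{Z}$'') is not covered by your forward-iteration argument.
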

	
	\begin{remark}\label{Remark1}
		Arbieto and Prudente in \cite{AP} proved that the horseshoe map $F$ has an unique maximal entropy measure $\mu_{max}$. Since $h_{top}(F)=\log \frac{1+\sqrt{5}}{2}>0$, using Lemma \ref{Red. Med.} we obtain that $\mu_{\max}(R_{1})>0$. Moreover, by Theorem \ref{t.LOR} item (2) the set $\hat{\Lambda}$ contains all recurrent points, so we conclude that  $\mu_{max}(\hat{\Lambda})=1$.
	\end{remark}
	
	In \cite{AP}, it was obtained the uniqueness of equilibrium state for the horseshoe map $F$ with respect to H\"{o}lder continuous potentials $\phi:\Lambda\rightarrow \mathbb{R}$ that do not depend on the center-stable directions, that is, such that there exists a H\"{o}lder  continuous potential $\theta$ such that $\phi (X) = \theta (z)$ for every point $X=(x,y,z)\in \Lambda$. The uniqueness also was obtained in \cite{RS}, considering H\"{o}lder  continuous potentials $\phi$ on $\Lambda$ with small variation and that do not depend on the unstable direction, that is, such that there exists a H\"{o}lder  continuous potential $\theta$ such that $\phi (X) = \theta (x,y)$ for every point $X=(x,y,z)\in \Lambda$. The following result in \cite{RS2} extends it without the restriction on the unstable direction.
	\begin{theorem}\label{RS2}
		Let $F: \Lambda \rightarrow \Lambda$ be the horseshoe map and let $\phi:\Lambda \rightarrow  \mathbb{R}$ be a H\"{o}lder  continuous potential that satisfies the condition:
		\begin{equation}\label{smallvariation}
			\sup\phi - \inf \phi < \frac{\log\omega}{2},   \    \     \    \    \   (\textrm{\textit{small variation}})
		\end{equation}
		where $\omega =\frac{1+\sqrt{5}}{2}$. Then there exists a unique equilibrium state for the system $F$ with respect to the potential $\phi$. 
	\end{theorem}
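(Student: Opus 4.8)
The plan is to reduce the uniqueness question for $F$ to a thermodynamic statement about a non-uniformly expanding quotient map, and then to apply the criterion of Ramos--Viana \cite{RV} for hyperbolic potentials (existence being already furnished by Theorem \ref{t.LOR}(3)). First I would rule out the measures carried by the heterodimensional cycle. By Remark \ref{Remark1} the measure of maximal entropy satisfies $h_{\mu_{\max}}(F)=\log\omega$ and $\mu_{\max}(R_{1})>0$, so $P(F,\phi)\ge\log\omega+\inf\phi$; on the other hand, by Lemma \ref{Red. Med.} any invariant measure $\nu$ with $\nu(R_{1})=0$ is a convex combination of $\delta_{P}$ and $\delta_{Q}$, hence has zero entropy and $h_{\nu}(F)+\int\phi\,d\nu\le\sup\phi$. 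Since \eqref{smallvariation} in particular forces $\sup\phi-\inf\phi<\log\omega$, such $\nu$ cannot be an equilibrium state, so every equilibrium state $\mu$ has $\mu(R_{1})>0$. Feeding this into ergodicity together with Theorem \ref{t.LOR}(2) --- applied to a block $10^{k}1$ with $\mu([10^{k}1])>0$, which exists because a $\mu$-generic point visits $R_{1}$ infinitely often and is not asymptotic to $Q$ --- I would then conclude $\mu(\hat\Lambda)=1$: every equilibrium state lives on the points with trivial central manifold.

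Next I would pass to the quotient of $F$ by the strong-stable foliation (collapsing the $x$-direction), obtaining a factor map $\hat F$ on a subset $\hat\Lambda^{+}$ of the $(y,z)$-square, semiconjugate to the golden mean subshift $\Sigma_{11}$ exactly as $F$ is. Since $\phi$ is H\"older and stable leaves are uniformly contracted, a Sinai-type cohomology argument replaces $\phi$ by a cohomologous H\"older potential that is constant on stable leaves and descends to a H\"older potential $\hat\phi$ on $\hat\Lambda^{+}$; as cohomologous potentials share equilibrium states and the stable collapse is a principal extension, the equilibrium states of $(F,\phi)$ correspond bijectively to those of $(\hat F,\hat\phi)$. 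So it suffices to prove uniqueness for $(\hat F,\hat\phi)$. The map $\hat F$ is uniformly expanding in $z$ and, once the cycle $\{P,Q\}$ is removed, expanding in the central direction $y$ along the orbits selected in the first step, so $(\hat F,\hat\phi)$ fits --- after passing, if needed, to an induced full-branch model or a countable Markov recoding --- into the non-uniformly expanding framework of \cite{RV}.

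The heart of the matter is to verify that $\hat\phi$ is a \emph{hyperbolic potential} for $\hat F$ in the sense of \cite{RV}: the pressure must not emanate from the non-expanding region, i.e. the pressure of the sub-systems concentrating near the cycle must stay strictly below $P(\hat F,\hat\phi)$. Here the key computation is that an orbit staying in $R_{0}$ for $k$ steps has central derivative $(f^{k})'(y)=e^{-k}\bigl(f^{k}(y)/y\bigr)^{2}$, so along a return to $R_{1}$ the accumulated central distortion is of order $e^{-k}$ up to the quadratic boundary term $\bigl(f^{k}(y)/y\bigr)^{2}$; running this along an arbitrary itinerary and weighing the resulting exponential against the count $\sim\omega^{n}$ of admissible words of length $n$ and against the oscillation of $\hat\phi$ is what produces the threshold, with \eqref{smallvariation} being precisely the inequality that makes the comparison close (the factor $\tfrac12$ reflecting that quadratic term). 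Granting hyperbolicity of $\hat\phi$, \cite{RV} yields a unique equilibrium state for $(\hat F,\hat\phi)$, and lifting it through the correspondence above gives the unique equilibrium state for $(F,\phi)$. The hard part will be this pressure estimate with the sharp constant, aggravated by the fact that the relevant dynamics lives on the non-compact set $\hat\Lambda$ and the quotient is not smooth, which is what forces the detour through an induced or countable-alphabet model; by contrast, the cohomological reduction to a stable-leaf-constant potential is routine.
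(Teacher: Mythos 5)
There is no proof of this statement in the paper to compare against: Theorem \ref{RS2} is quoted verbatim from \cite{RS2}, and the machinery the paper itself builds around it (Sections \ref{equivalence}--\ref{Rectangles}) follows the Ramos--Siqueira strategy of projecting $F^{-1}$ along the $z$-axis onto the center-stable planes, obtaining the non-uniformly expanding map $G:\Omega\to\Omega$, replacing $\phi$ by a cohomologous potential independent of $z$, and transferring equilibrium states between $(F^{-1},\varphi)$ and $(G,\phi_{|\Omega})$. Your first step (excluding $\delta_P$, $\delta_Q$ and their combinations, and concluding that equilibrium states charge $R_1$ and live on $\hat\Lambda$) is fine and consistent with what the paper does elsewhere. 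The structural problem is your choice of quotient: you collapse the strong-stable ($x$-) direction of $F$ and work with the factor $\hat F$ on the $(y,z)$-coordinates, claiming that $\hat F$ is "expanding in the central direction $y$ along the orbits selected in the first step." This contradicts Theorem \ref{t.LOR}(1): every ergodic measure other than $\delta_Q$ has \emph{negative} central Lyapunov exponent, so along exactly the orbits that matter the center is contracted under forward iteration, $\hat F$ is not (non-uniformly) expanding, and the Ramos--Viana framework \cite{RV} does not apply to it. The expansion in the center only appears after reversing time, which is why \cite{RS,RS2} (and Theorem \ref{transfer} here) project $F^{-1}$, collapsing the $z$-direction, to get $G$ with $x$ uniformly expanded by $\alpha=\lambda_0^{-1}$ and $y$ expanded on average.

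Beyond the wrong quotient, the heart of the argument --- verifying that the projected potential is hyperbolic with the sharp threshold $\tfrac{\log\omega}{2}$ --- is not carried out: you offer a heuristic about the identity $(f^{k})'(y)=e^{-k}(f^{k}(y)/y)^{2}$ and a word count $\sim\omega^{n}$, and explicitly defer "the pressure estimate with the sharp constant." That estimate is precisely where the small-variation hypothesis does its work (in \cite{RS,RS2} it guarantees that equilibrium measures give enough weight to the region where $G$ is expanding, producing positive frequency of hyperbolic times), so leaving it as an admission, on top of applying it to a factor that is not expanding, leaves the proposal with a genuine gap rather than a complete alternative proof. The cohomological reduction to a potential constant along the collapsed direction and the measure-lifting correspondence are indeed the routine parts, and the paper proves the analogous statements carefully (Proposition \ref{zero entropy}, Theorem \ref{lift}, Propositions \ref{lift}--\ref{proj}) for the correct projection.
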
 
	
	In \cite{LRL2014}, for rational maps, it was considered a class of potential called hyperbolic that include potentials that satisfies  \eqref{smallvariation} and also some potential with large variation (\cite{LRL2014}, Lemma A.1) . In \cite{RV} it was introduced an another class of hyperbolic potentials that include potentials that satisfy the condition \eqref{smallvariation}. In this way, questions naturally arise:
	\begin{question}
		Are there hyperbolic potentials in the senses of the works \cite{LRL2014} and \cite{RV} with large variation and that admit uniqueness of equilibrium states for the Horseshoe $F$?
	\end{question}
	
	
	Our results provide answers to this question. That is, we obtain hyperbolic potentials in both contexts that admit a unique equilibrium state and with big variation.
	
	In order to state our main results, we need some definitions.

	In the Section~\ref{Construction of I.E}, we construct an inducing scheme using a parametrized family of subsets $\Sigma_\alpha \subset \Sigma$ and a respective inducing time $\rho: \Sigma_\alpha \rightarrow \mathbb{N}$ such that the induced map $T=\sigma^\rho: \Sigma_\alpha \rightarrow \Sigma_\alpha$ is topologically conjugated to a full shift $\sigma: S^\mathbb{Z} \rightarrow S^\mathbb{Z}$ on an countable alphabet $S$, i.e., there exists a continuous homeomorphism $\Pi: S^{\mathbb{Z}} \rightarrow \Sigma_\alpha$ such that $\Pi \circ  \sigma = T \circ \Pi$. Given a potential $\varphi:\Sigma \rightarrow \mathbb{R}$, we define the induced potential as $\varphi_\rho(w) = \sum_{k=0}^{\rho(w)-1} \varphi(\sigma^{k}(w))$ and $\Psi = \varphi_\rho \circ \Pi$.
	
	Now, we define the class of potentials that we consider in our first main result.
	
	\begin{definition}\label{def.admissible}
		A continuous potential $\phi:\Lambda \longrightarrow \mathbb{R}$ is called admissible if satisfies:
		\begin{itemize}
			\item[$(C_{1})$] The  potential $\Psi = \varphi_\rho \circ \Pi$ associated to $\varphi:=\phi\circ h^{-1}:\Sigma \rightarrow \mathbb{R}$ is locally H\"{o}lder  continuous.
			\item[$(C_{2})$]  There exists a natural $n \in \mathbb{N}$ such that
			\[
			P_{top}(\phi)>\sup_{X\in\Lambda}\frac{\phi_n(X)}{n},
			\]
			where $\phi_{n}(X)=\sum_{k=0}^{n-1}\phi(F^{k}(X))$.
		\end{itemize}
	\end{definition}
	
	\begin{remark}\label{Remark2}
		If $\psi=\phi_n/n$ then $P_{top}(\psi)=P_{top}(\phi)$ and the equilibrium states for $\psi$  and $\phi$ coincide. Thus, we can assume without loss of generality that $n=1$. Another remark about admissible potentials is that since the condition $(C_2)$ is open in the $C^0$-topology and the topological entropy of $F$ is positive, it is easy to see that $\phi$ satisfy $(C_2)$ if $\sup |\phi|$ is small enough. 
	\end{remark}
	
	\begin{remark}
		Let $\mu$ be an $F$-invariant probability measure on $\Lambda$. Denote by  $\lambda^{c}_{\mu}=\int \log|DF|_{E^{c}}| d\mu$ the central Lyapunov exponent associated to $\mu$. 
		Using arguments similar to [\cite{LOR}, Lemma 4.3 and Corollary 4.4],  it is possible to show that the central Lyapunov exponents of the equilibrium states associated to admissible potentials are negative.
	\end{remark}

	Now, we state our first main result:
	
	\begin{theorema}\label{maintheorem}
		Let $F:\Lambda \longrightarrow \Lambda$ be the horseshoe map. If $\phi:\Lambda \longrightarrow \mathbb{R}$ is an admissible potential, then  there exists a unique equilibrium state for $\phi$.
	\end{theorema}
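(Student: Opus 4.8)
The plan is to transfer the problem to a countable Markov shift and invoke Sarig's thermodynamic formalism. First I would recall the inducing scheme constructed in Section~\ref{Construction of I.E}: the induced map $T=\sigma^{\rho}:\Sigma_{\alpha}\to\Sigma_{\alpha}$ is topologically conjugate, via $\Pi:S^{\mathbb{Z}}\to\Sigma_{\alpha}$, to the full shift $\sigma:S^{\mathbb{Z}}\to S^{\mathbb{Z}}$ on the countable alphabet $S$, and the induced potential is $\Psi=\varphi_{\rho}\circ\Pi$ where $\varphi=\phi\circ h^{-1}$. By hypothesis $(C_{1})$, $\Psi$ is locally H\"{o}lder continuous. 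The first substantive step is to check that $\Psi$ is \emph{positively recurrent} in the sense of Sarig: one must show that the Gurevich pressure $P_{G}(\Psi)$ is finite and that the pressure is attained, i.e. the induced system has a conformal measure and an invariant density. I expect this to follow from condition $(C_{2})$: the inequality $P_{top}(\phi)>\sup_{X}\phi_{1}(X)$ rules out the possibility that pressure escapes to the ``boundary'' fixed point $Q$ (equivalently, to the tower's base), which is exactly the mechanism producing phase transitions in Theorem~\ref{t.LOR}(4). Concretely, one estimates $\sum_{n}Z_{n}(\Psi)$ geometrically using the H\"{o}lder constants and the gap in central Lyapunov exponents from Theorem~\ref{t.LOR}(1), and shows the sum converges with the right abscissa, so that $\Psi$ is recurrent, indeed positively recurrent.

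Once positive recurrence is established, Sarig's theorems (the countable-alphabet Ruelle--Perron--Frobenius theorem together with the uniqueness statement for positively recurrent locally H\"{o}lder potentials on a topologically mixing countable Markov shift) yield a unique Ruelle--Perron--Frobenius measure, hence a unique equilibrium state $\nu$ for $\Psi$ on $S^{\mathbb{Z}}$, provided $\nu$ has finite entropy and the integrability condition $\int\rho\,d\nu<\infty$ holds. Checking $\int\rho\,d(\Pi_{*}\nu)<\infty$ is the crucial finiteness step and again should come from $(C_{2})$ via a large-deviations/Kac-type estimate bounding the measure of long excursions. The next step is to push $\nu$ back up the tower: using the standard inducing/Abramov formalism one defines, from the unique equilibrium state for the induced potential $\Psi$, an $F$-invariant probability $\mu$ on $\Lambda$ with $h_{\mu}(F)+\int\phi\,d\mu=P_{top}(\phi)$. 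Here I would use that $h$ semiconjugates $F$ to the subshift $\Sigma_{11}$ and that, by Lemma~\ref{Red. Med.} and Remark~\ref{Remark1}, any equilibrium state we care about gives full measure to $\hat{\Lambda}$ and positive measure to $R_{1}$, so it ``sees'' the inducing domain and $h$ is essentially injective on the relevant set; this makes the correspondence between equilibrium states of $F$ and of $\sigma$ on $\Sigma_{11}$ a bijection on the relevant classes.

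The final step is uniqueness upstairs. Suppose $\mu$ is any equilibrium state for $(F,\phi)$. By the remark on negative central Lyapunov exponents and Theorem~\ref{t.LOR}(2), $\mu$ is supported (mod $0$) on $\hat{\Lambda}$ and $\mu(R_{1})>0$, so almost every point returns to the inducing domain $\Sigma_{\alpha}$ infinitely often; projecting $\mu$ and inducing it produces a $T$-invariant measure $\bar{\mu}$ which, by the Abramov formula and the variational principle applied twice, must be an equilibrium state for $\Psi$. By Sarig's uniqueness $\bar{\mu}=\nu$, and since the de-induction map (building an $F$-invariant measure from a $T$-invariant one with finite return time) is injective, $\mu$ is uniquely determined. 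I expect the main obstacle to be the positive-recurrence and integrability estimates: one must quantitatively control the induced potential $\Psi$ and the return time $\rho$ along excursions that shadow the fixed point $Q$, showing that condition $(C_{2})$ forces exponential-type decay of the partition functions $Z_{n}(\Psi)$ and of $\nu(\{\rho>n\})$; without a clean such estimate neither the existence of the RPF measure nor the finiteness of $\int\rho\,d\nu$ is guaranteed, and it is precisely here that the phase transition of Theorem~\ref{t.LOR}(4) could reappear.
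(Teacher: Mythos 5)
Your proposal follows essentially the same route as the paper: push any equilibrium state down through $h$, lift it to the inducing scheme $(\rho,\Sigma_{\alpha})$ (the paper does this via the Pesin--Senti--Zhang liftability criterion with $A=[1]$, after using $(C_{2})$ and Lemma~\ref{Red. Med.} to rule out $\delta_{Q},\delta_{P}$ and get $\mu(R_{1})>\alpha$), and then apply Sarig-type theory for the countable full shift to the locally H\"older induced potential $\Psi$. The estimates you flag as the main obstacle are exactly what the paper supplies in Claims~\ref{afirmation1}--\ref{afirmation2} and the $c(\alpha)$ cylinder-counting bound: condition $(C_{2})$ alone gives $e^{\varphi_{\rho}(w)}\leq e^{-i\epsilon_{0}}$ on $\Sigma_{i}$, which yields finiteness of the Gurevich pressure, positive recurrence, $\int\rho\,d\nu<\infty$, and finite entropy, so your plan is sound and matches the paper's proof in substance.
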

	
	In \cite{RS2}, the authors proved that if $\phi$ is a H\"{o}lder  continuous potential with small variation, i.e., $\sup \phi - \inf \phi$ is small enough,  then $\phi$ has a unique equilibrium state. The result of Theorem A differs from \cite{RS2} due to the strategy of the proof of uniqueness of equilibrium states and also due to the fact that the class of admissible potential contains potentials that do not satisfy the condition of small variation. Indeed, in Section~\ref{Example}, we construct new examples of potentials that are admissible and, consequently, have a unique equilibrium state. In fact, we show that if $\phi$ is a H\"{o}lder  potential that is  constant on the set $\{(x,y,z); z<c\}$, for some $5/6<c<1$  and $\sup \phi = \phi(Q)$, then there exists a non-empty open interval $I\subset \mathbb{R}$ such that if $t \in I$ then  $t\phi$ is admissible. A precise description of $I$ is given in Proposition~\ref{Example Prop.}.

	Now we reproduce the definition of a map $G$, as introduced in \cite{RS}, which is related to the projection of $F^{-1}$ in two center-stable planes. We consider an abstract space consisting of three rectangles $S_{1}$, $S_{2}$ and
	$S_{3}$ defined as follows:
	\[
	S_{1} = [0,\lambda_{0}] \times [0,1] \times \{0\},
	\]
	\[   
	S_{2} = [3/4 - \lambda_{0},3/4] \times [0,\sigma] \times \{0\},
	\]
	\[
	S_{3} =	[0,\lambda_{0}] \times [0,1] \times \{5/6\}.
	\]
	
	The rectangles are inside two planes 
	$P_{0}=[0,1]\times [0,1] \times \{0\}$ and $P_{1}=[0,1]\times [0,1] \times \{5/6\}$. Let $g_{0} : [0,1] \to \mathbb{R}$, $g_{0}(y) = f^{-1}(y)$ and let $g_{1} : [0,1] \to \mathbb{R}$, $g_{1}(y) = 1 - \sigma^{-1}y$. Let $\alpha = \lambda_{0}^{-1}$.
	
	We define a map $G : \cup_{i=1}^{3} S_{i}
	\to P_{1} \cup P_{2}$ by its restrictions $G_{i}$ to each rectangle $S_{i}$ as follows:	
	
	\[
	G_{1}(x,y,z) = (\alpha x, g_{0}(y),0),
	\]
	\[   
	G_{2}(x,y,z) = (\alpha(3/4 - x), g_{1}(y), 5/6),
	\]
	\[
	G_{3}(x,y,z) = (\alpha x, g_{0}(y),0).
	\]
	The behavior of the map $G$ is similar on $S_{1}$ and $S_{3}$. In these rectangles, the vertical direction is expanding. In the horizontal direction, $G$ sends the points from the right side to the left side, except for the extreme points whose the $x$ coordinates are fixed. In the rectangle $S_{2}$ both directions are expanding. 
	
	We consider $\Omega$ the maximal invariant set of the union of the rectangles:
	\[
	\displaystyle \Omega : = \bigcap_{n \in \mathbb{N}} G^{-n}\bigg{(} \bigcup_{i=1}^{3} S_{i} \bigg{)},
	\]
	and from now on we call $G$ the restriction $G_{|\Omega} : \Omega \to 
	\Omega$. See details of the map $G$ in \cite{RS} and \cite{RS2}, as the proof that this map is non-uniformly expanding. We also observe that $G$ is strongly mixing.

	
	The map $G$ is very important because we can use results for non-uniformly expanding maps and transfer them to the horseshoe $F$, as in the following main result, which allows potentials with big variation. This is a novelty with respect to previous results. 
	
	\begin{theorema} \label{transfer}
		Let $F : \Lambda \to \Lambda$ be the horseshoe map and $G:\Omega \to \Omega$ be the projection of $F^{-1}$ as defined above. The system $(F^{-1}, \phi)$ has uniqueness of equilibrium state if, and only if, the system $(G,\phi_{|\Omega})$ does so.
	\end{theorema}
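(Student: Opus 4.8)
The plan is to realize $(\Omega,G)$ as a factor of $(\Lambda,F^{-1})$ that preserves all thermodynamic data, so that equilibrium states correspond bijectively. (Recall first that passing from $F$ to $F^{-1}$ costs nothing: $\mathcal M_{F}(\Lambda)=\mathcal M_{F^{-1}}(\Lambda)$, $h_\mu(F)=h_\mu(F^{-1})$ and $\int\phi\,d\mu$ is unchanged, so $(F,\phi)$ and $(F^{-1},\phi)$ have literally the same equilibrium states; it is $F^{-1}$, not $F$, that projects to $G$.) Projecting a point of $\Lambda$ onto the center-stable planes $P_0,P_1$ --- that is, collapsing the $z$-coordinate, which spans the strong-stable direction of $F^{-1}$ --- gives a continuous surjection $\pi:\Lambda\to\Omega$ with $\pi\circ F^{-1}=G\circ\pi$; this is exactly the projection recalled before the statement, studied in \cite{RS,RS2}. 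I will then show that
\[
\pi_*:\mathcal M_{F^{-1}}(\Lambda)\longrightarrow\mathcal M_{G}(\Omega)
\]
is an affine bijection under which metric entropy and the integral of the potential are preserved; granting this, $\mu$ is an equilibrium state for $(F^{-1},\phi)$ iff $\pi_*\mu$ is an equilibrium state for $(G,\phi_{|\Omega})$, so one of these sets is a singleton exactly when the other is.

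\textbf{Bijectivity of $\pi_*$.} First I would show that $(\Lambda,F^{-1})$ is, away from the exceptional sets mentioned below, the natural extension of $(\Omega,G)$. The point is that $\Omega$ is the maximal invariant set of $G$, so each of its points has a full $G$-orbit, while $F^{-1}$ contracts the $z$-fibers uniformly; hence a compatible $G$-orbit determines a unique $z$-coordinate (an intersection of nested, shrinking intervals governed by the itinerary), and therefore a unique point of $\Lambda$. Consequently $\pi$ is the canonical factor map of a natural extension, $\pi_*$ is affine and continuous, it is surjective via the inverse-limit lift, and it is injective because an $F^{-1}$-invariant measure is reconstructed from its projection by Rohlin's uniqueness of the invariant lift. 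Two places need care. The trivial measures: by Lemma \ref{Red. Med.} the invariant measures supported off $R_1$ are the convex combinations of $\delta_P$ and $\delta_Q$, and $P$, $Q$ and their images are common fixed points of $F^{-1}$ and $G$, so they match up trivially. The collapsed central segments $J$ of Theorem \ref{t.DHRS}, on which the coding, hence $\pi$, is constant: by Theorem \ref{t.LOR}(2) and Remark \ref{Remark1} every recurrent point other than $P,Q$, and so the support of every ergodic invariant measure other than $\delta_P,\delta_Q$, misses these segments, so they are invisible to all the measures involved.

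\textbf{Preservation of entropy and of $\int\phi$.} Each fiber $\pi^{-1}(\omega)$ lies inside one strong-stable leaf of $F^{-1}$, along which $F^{-1}$ contracts exponentially; thus the topological entropy of $F^{-1}$ on each fiber is zero, and the Ledrappier--Walters relative variational principle gives $\sup\{h_\mu(F^{-1}):\pi_*\mu=\nu\}=h_\nu(G)$ for every $\nu$, which together with the bijectivity above yields $h_\mu(F^{-1})=h_{\pi_*\mu}(G)$ for every $\mu$ (equivalently, one invokes that natural extensions preserve entropy). For the potential, $\phi_{|\Omega}$ is the potential induced by $\phi$ on the quotient $\Omega$; since the fibers of $\pi$ sit in contracting leaves, $\phi-\phi_{|\Omega}\circ\pi$ has uniformly bounded Birkhoff sums along $F^{-1}$ --- and for the potentials considered here it vanishes identically, $\phi$ not depending on the collapsed direction --- so $\int_\Lambda\phi\,d\mu=\int_\Omega\phi_{|\Omega}\,d(\pi_*\mu)$. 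Adding the two identities, $h_\mu(F^{-1})+\int\phi\,d\mu=h_{\pi_*\mu}(G)+\int\phi_{|\Omega}\,d(\pi_*\mu)$; taking the supremum over $\mu$ gives $P_{top}(F^{-1},\phi)=P_{top}(G,\phi_{|\Omega})$, and $\mu$ attains the left-hand supremum iff $\pi_*\mu$ attains the right-hand one. The equivalence follows.

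\textbf{Main obstacle.} The formal transfer is routine; the real work is in justifying the natural-extension picture in the presence of the non-hyperbolic fixed points $P,Q$ and of the uncountable family of collapsed central segments. Concretely, I expect the delicate steps to be: (i) proving that these exceptional sets carry no mass for any invariant measure other than the completely explicit $\delta_P,\delta_Q$, so that $\pi$ restricts to a genuine inverse-limit factor map where it matters --- this is where Lemma \ref{Red. Med.}, Theorem \ref{t.LOR}(2) and Remark \ref{Remark1} enter; and (ii) pinning down precisely in what sense $\phi_{|\Omega}$ is the potential induced by $\phi$, so that the identity of integrals is exact rather than merely cohomological --- immediate if $\phi$ does not depend on the strong-unstable ($z$) direction of $F$, and otherwise requiring the normalization indicated above.
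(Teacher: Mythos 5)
Your overall architecture is the same as the paper's: the projection $\pi$ that collapses the $z$-coordinate semiconjugates $F^{-1}$ to $G$, the fibers carry zero entropy ([\cite{RS}, Lemma 5.1]), the Ledrappier--Walters relative variational principle \cite{LW} gives one inequality between free energies, and $G$-invariant measures are lifted back through the increasing $\sigma$-algebras $F^{-n}\pi^{-1}(\mathcal{A}_{\Omega})$, which generate the Borel $\sigma$-algebra of $\Lambda$; your ``natural extension plus Rohlin'' packaging is exactly this, stated as a bijection $\pi_*$ instead of the paper's lift-and-project propositions. For potentials that do not depend on the $z$-coordinate your argument is essentially the paper's proof.

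The genuine gap is in the step ``preservation of $\int\phi$'' for general $\phi$. The claim that $\psi=\phi-\phi_{|\Omega}\circ\pi$ has uniformly bounded Birkhoff sums along $F^{-1}$ is false: the $j$-th summand is controlled by $d\bigl(F^{-j}X,\pi(F^{-j}X)\bigr)$, the height of $F^{-j}X$ inside its slab, and this does not decay along backward orbits. Indeed, if $F^{-(j-1)}X\in R_1$ then $F^{-j}X\in R_0$ (the transition $1\to 1$ is forbidden) and its height is $z(F^{-(j-1)}X)/\beta_0\geq 5/(6\beta_0)$, a constant; since by Lemma \ref{Red. Med.} every invariant measure other than combinations of $\delta_P,\delta_Q$ charges $R_1$, typical backward orbits visit $R_1$ with positive frequency and the sums grow linearly. (What is true, and what you seem to have in mind, is $d(F^{-j}X,F^{-j}(\pi X))\to 0$; but the summands compare $F^{-j}X$ with its own projection $G^{j}(\pi X)$, not with $F^{-j}(\pi X)$.) Correspondingly the identity $\int\phi\,d\mu=\int\phi_{|\Omega}\,d(\pi_*\mu)$ fails in general (take $\phi(x,y,z)=z$), so the transfer cannot be run directly on a $z$-dependent potential. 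The paper's remedy, which your proof needs at this point, is Proposition 3.1 of \cite{RS}: every H\"older $\phi$ is cohomologous, for the horseshoe dynamics, to a H\"older potential independent of $z$; one transfers that representative, equilibrium states being unchanged by the coboundary. Note that this is where the H\"older hypothesis enters, and that ``$\phi_{|\Omega}$'' in the statement must then be read through this cohomologous representative. A minor further slip: $\pi$ is not constant on the central segments $J$ of Theorem \ref{t.DHRS} (the coding $h$ is; $\pi$ only collapses $z$ and is injective on each $J$), so those segments are no obstruction to your inverse-limit picture and need no special treatment.
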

	
	We define a class of potentials that we will use to obtain uniqueness of equilibrium states in our third main result. They are related to the non-uniformly expanding map $G$ as follows.

	\begin{definition}\label{def.hyperbolic}
		A continuous potential $\phi:\Lambda \longrightarrow \mathbb{R}$ is called projective hyperbolic if the following holds:
		\begin{itemize}
			\item[$(D_{1})$] The restriction $\varphi:=\phi_{|\Omega}$ is H\"{o}lder and there exists a measurable function $u:\Omega \to \mathbb{R}$ such that  $\varphi \geq u - u \circ G$.
			\item[$(D_{2})$] If $\mathcal{E}$ denotes the set of $G$-invariant expanding measures on $\Omega$, we have that
			\[
			\sup_{\nu \in \mathcal{E}^{c}}\bigg{\{}\int \varphi d\nu\bigg{\}} < h(G) - \sup_{\nu \in \mathcal{E}^{c}}\{h_{\nu}(G)\}.
			\]  
		\end{itemize}
	\end{definition}
	The class of projective hyperbolic potentials contains some potentials with big variation, as will be seen in Section \ref{Example Hyperbolic}.

	We can now state our third main result as a class of potentials that includes potentials that do not satisfy the condition of small variation and also constant on the fibers in the $z$-axis:
	
	\begin{theorema}\label{hyperbolic}
		Let $F:\Lambda \longrightarrow \Lambda$ be the horseshoe map. If $\phi:\Lambda \longrightarrow \mathbb{R}$ is a projective hyperbolic potential, then  there exists a unique equilibrium state $\mu_{\varphi
		}$ for the system $G$ with respect to the potential $\varphi = \phi_{|\Omega}$. It is equivalent to existing a unique equilibrium state for the horseshoe $F$.
	\end{theorema}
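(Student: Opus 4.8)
The plan is to deduce the theorem from established thermodynamic formalism for non-uniformly expanding maps, using the dictionary between $F$ and $G$ furnished by Theorem~\ref{transfer}. I would proceed in four steps: (i) show that conditions $(D_1)$ and $(D_2)$ force $\varphi=\phi_{|\Omega}$ to be an expanding (zooming) potential for $G$; (ii) invoke the equivalence between expanding potentials and hyperbolic potentials proved in \cite{AOS}, so that $\varphi$ becomes a H\"{o}lder hyperbolic potential for the strongly mixing, non-uniformly expanding map $G$; (iii) apply the uniqueness theorem of \cite{RV} to obtain a unique equilibrium state $\mu_\varphi$ for $(G,\varphi)$; and (iv) transport uniqueness to $F$ using Theorem~\ref{transfer} together with the remark that $F$ and $F^{-1}$ have the same equilibrium states, which also yields the asserted equivalence.

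Step (i) is the only computational point, and it is essentially a rearrangement of $(D_1)$ and $(D_2)$. From $(D_1)$, telescoping $\varphi\ge u-u\circ G$ gives $\sum_{k=0}^{n-1}\varphi\circ G^{k}\ge u-u\circ G^{n}$; dividing by $n$, integrating against an ergodic $G$-invariant probability measure $\nu$, and noting that the Birkhoff averages of the right-hand side tend to $0$ (a standard argument; here $\varphi$ is bounded because $\Omega$ is compact), one obtains $\int\varphi\,d\nu\ge 0$, hence $\int\varphi\,d\nu\ge 0$ for every $G$-invariant probability measure by the ergodic decomposition theorem. By the variational principle this forces
\[
P_{top}(G,\varphi)=\sup_{\nu}\Bigl\{h_{\nu}(G)+\int\varphi\,d\nu\Bigr\}\ \ge\ \sup_{\nu}h_{\nu}(G)=h(G).
\]
On the other hand, writing $\mathcal{E}^{c}$ for the non-expanding $G$-invariant measures, the subadditivity of the supremum together with $(D_2)$ gives
\[
\sup_{\nu\in\mathcal{E}^{c}}\Bigl\{h_{\nu}(G)+\int\varphi\,d\nu\Bigr\}\ \le\ \sup_{\nu\in\mathcal{E}^{c}}h_{\nu}(G)+\sup_{\nu\in\mathcal{E}^{c}}\int\varphi\,d\nu\ <\ h(G)\ \le\ P_{top}(G,\varphi).
\]
Thus the pressure of $\varphi$ strictly dominates the free energy of every non-expanding measure, i.e.\ the pressure emanates from the expanding region; combined with the H\"{o}lder regularity of $\varphi$ from $(D_1)$, this is precisely what it means for $\varphi$ to be an expanding/hyperbolic potential for $G$, so steps (ii)--(iii) apply and produce $\mu_\varphi$.

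For step (iv), $F:\Lambda\to\Lambda$ is a diffeomorphism, so a Borel probability measure is $F$-invariant if and only if it is $F^{-1}$-invariant; moreover $h_{\mu}(F)=h_{\mu}(F^{-1})$ for every such $\mu$, and the integral $\int\phi\,d\mu$ depends only on $\mu$. Hence the variational problems for $(F,\phi)$ and $(F^{-1},\phi)$ are identical, so $(F,\phi)$ has a unique equilibrium state if and only if $(F^{-1},\phi)$ does. By Theorem~\ref{transfer} this is equivalent to uniqueness for $(G,\phi_{|\Omega})$, which step (iii) establishes; this proves the stated equivalence and, in particular, the existence of a unique equilibrium state for $F$. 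I expect the main obstacle to lie not in the rearrangement above but in fitting $G$ into the framework of \cite{RV} and \cite{AOS}: one must pin down the non-uniform expansion estimates for $G$ and the structure of its parabolic non-expanding set lying over the fixed point coming from $P$, verify that the class $\mathcal{E}$ of expanding measures of Definition~\ref{def.hyperbolic} coincides with the notion used in those references, and check that the coboundary function $u$ can be chosen with the integrability those arguments require. The non-compactness intrinsic to the inducing structure behind $G$ is where its strong mixing and its semiconjugacy to $\Sigma_{11}$ (Theorem~\ref{t.DHRS}) are needed.
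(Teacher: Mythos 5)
Your proposal is correct and follows essentially the same route as the paper: showing via the coboundary inequality and $(D_2)$ that $\varphi=\phi_{|\Omega}$ is an expanding potential (Proposition~\ref{projectiveexpanding}), upgrading to hyperbolic via Theorem~\ref{HYPERZOOM} from \cite{AOS}, applying \cite{RV} to $(G,\varphi)$, and transferring to $F$ by Theorem~\ref{transfer}. Your step (iv) and the Birkhoff/ergodic-decomposition justification of $\int\varphi\,d\nu\geq 0$ merely spell out details the paper leaves implicit.
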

	
	We observe that in Section \ref{Rectangles} we show that if $\phi$ is a projective hyperbolic potential, then $\varphi = \phi_{|\Omega}$ is a hyperbolic potential (Proposition \ref{projectiveexpanding}) and since $G$ is a non-uniformly expanding map that is also strongly mixing (with density of pre-images of the points in the expanding set), we can obtain uniqueness of equilibrium states by \cite{RV}[Theorem 2].
	
	In the following, we state our fourth main result, which gives a condition for uniqueness of potentials with supremum at the fixed point $Q$ and it is less than the pressure.
    
    \begin{theorema}\label{ThQ}
     Let $F : \Lambda \to \Lambda$ be the horseshoe, $\eta$ the unique measure of maximal entropy for the non-uniformly expanding map $G : \Omega \to \Omega$ and $\phi$ a H\"older potential such that it holds that $\sup \phi_{|\Omega}  = \phi_{|\Omega}(Q)  < P(\phi_{|\Omega})$. Then, we obtain uniqueness of equilibrium state. Additionally, 	we have either every H\"older potential $\varphi$ such that $\sup \varphi_{|\Omega} < P(\varphi_{|\Omega})$ has uniqueness of equilibrium state or there exists a H\"older potential $\phi$ such that $\phi_{|\Omega}(Q)< \sup \phi_{|\Omega} < P(\phi_{|\Omega})$ and $\phi$ does not have uniqueness of equilibrium states.
    \end{theorema}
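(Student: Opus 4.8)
The plan is to reduce everything to the thermodynamic formalism of the non-uniformly expanding map $G:\Omega\to\Omega$ and to the dichotomy already available for hyperbolic potentials. First I would record the structural facts we need: $G$ is non-uniformly expanding, strongly mixing, with density of preimages in the expanding set, so by \cite{RV}[Theorem 2] every \emph{hyperbolic} potential for $G$ has a unique equilibrium state, and by \cite{AOS} the hyperbolic potentials coincide with the expanding (zooming) potentials, i.e.\ those satisfying the free-energy inequality $\sup_{\nu\in\mathcal{E}^c}\int\varphi\,d\nu < h(G)-\sup_{\nu\in\mathcal{E}^c}\{h_\nu(G)\}$ together with the coboundary-domination condition $(D_1)$. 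The key observation is that the fixed point $Q$ sits in the non-expanding part: the orbit $\{Q\}$ is the trivial homoclinic class, so $\delta_Q\in\mathcal{E}^c$, and moreover every $G$-invariant measure $\nu$ with $\nu\notin\mathcal{E}$ has $\int\log|DG|_{E^c}|\,d\nu\le 0$ (this is the analogue, via the identification of $G$ with the projection of $F^{-1}$, of Theorem \ref{t.LOR}(1)); such measures are, up to the dynamics on the relevant central segments, essentially carried near the $Q$--$P$ cycle and in particular $\int\varphi\,d\nu\le\sup\varphi_{|\Omega}=\varphi_{|\Omega}(Q)$.

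With that in hand, the first main step is: under the hypothesis $\sup\varphi_{|\Omega}=\varphi_{|\Omega}(Q)<P(\varphi_{|\Omega})$, verify that $\varphi=\phi_{|\Omega}$ is a hyperbolic (equivalently, expanding/zooming) potential for $G$. For $(D_2)$: since $h_\nu(G)\ge 0$ and, for $\nu\in\mathcal{E}^c$, $\int\varphi\,d\nu\le\varphi_{|\Omega}(Q)$ while $\delta_Q\in\mathcal{E}^c$ forces $h_{top}(G)>\sup_{\nu\in\mathcal{E}^c}h_\nu(G)$ is NOT automatic — rather I would argue directly from the variational principle restricted to $\mathcal{E}^c$: $\sup_{\nu\in\mathcal{E}^c}\{h_\nu(G)+\int\varphi\,d\nu\}\le \sup_{\nu\in\mathcal{E}^c}h_\nu(G)+\varphi_{|\Omega}(Q)$; since the measure of maximal entropy $\eta$ of $G$ is expanding (as $\eta(\hat\Lambda$-analogue$)=1$, cf.\ Remark \ref{Remark1}) we have $\sup_{\nu\in\mathcal{E}^c}h_\nu(G)<h(G)$, and combining with $\varphi_{|\Omega}(Q)<P(\varphi_{|\Omega})$ — after noting $P(\varphi_{|\Omega})$ is attained on $\mathcal{E}$ once this is known — yields the strict inequality $(D_2)$. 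For $(D_1)$, Hölder continuity is assumed and the coboundary bound $\varphi\ge u-u\circ G$ for a measurable $u$ is the standard "expanding potential" normalization, which for zooming potentials follows from the fact that the pressure emanates from the expanding region; alternatively one shows $\varphi$ is cohomologous (measurably) to a potential whose ergodic sums along non-expanding orbits are controlled. Then $\varphi$ is projective hyperbolic in the sense of Definition \ref{def.hyperbolic}, and Theorem \ref{hyperbolic} (equivalently \cite{RV}[Theorem 2] plus Theorem \ref{transfer}) gives a unique equilibrium state for $(G,\varphi)$, hence for $(F^{-1},\phi)$ and thus for $(F,\phi)$.

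For the dichotomy in the second sentence, I would argue by contradiction/case analysis on the location of $\sup\varphi_{|\Omega}$ relative to the value at $Q$. Suppose it is \emph{not} the case that every Hölder $\varphi$ with $\sup\varphi_{|\Omega}<P(\varphi_{|\Omega})$ has a unique equilibrium state; pick such a $\varphi$ with non-unique equilibrium state. If $\sup\varphi_{|\Omega}=\varphi_{|\Omega}(Q)$ we contradict the first part, so necessarily $\sup\varphi_{|\Omega}>\varphi_{|\Omega}(Q)$; since also $\varphi_{|\Omega}(Q)\le\sup\varphi_{|\Omega}$ trivially and $\sup\varphi_{|\Omega}<P(\varphi_{|\Omega})$ by assumption, this is exactly a potential $\phi$ with $\phi_{|\Omega}(Q)<\sup\phi_{|\Omega}<P(\phi_{|\Omega})$ and non-unique equilibrium states, which is the second alternative. (If no such bad $\varphi$ exists, the first alternative holds outright.) I expect the main obstacle to be the first main step — specifically, rigorously showing that $\sup_{\nu\in\mathcal{E}^c}\int\varphi\,d\nu\le\varphi_{|\Omega}(Q)$ and that the pressure $P(\varphi_{|\Omega})$ is attained on expanding measures, i.e.\ that the supremum of free energy over $\mathcal{E}^c$ is strictly dominated; this is where the structure of $G$ near the heterodimensional cycle (the precise analogue of Theorem \ref{t.LOR}(1)--(2) for the projected map $G$, and the behavior of invariant measures supported on the non-trivial central segments) must be invoked carefully rather than quoted verbatim.
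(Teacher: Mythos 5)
Your argument for the dichotomy (second sentence of Theorem \ref{ThQ}) is correct and is essentially the contrapositive argument the paper uses: a non-unique H\"older potential with $\sup\varphi_{|\Omega}<P(\varphi_{|\Omega})$ cannot have its supremum at $Q$ by the first part, hence realizes the second alternative. (The paper additionally perturbs such a $\varphi$ to an explicit potential $(1+t\,d(\cdot,X))\varphi$ with the same equilibrium states, but your shorter argument already yields the stated dichotomy.)

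The first part, however, has a genuine gap, and it is exactly at the point you flag as "the main obstacle". What you must verify is the expanding condition $\sup_{\nu\in\mathcal{E}^{c}}\{h_{\nu}(G)+\int\varphi\,d\nu\}<\sup_{\nu\in\mathcal{E}}\{h_{\nu}(G)+\int\varphi\,d\nu\}$ (not $(D_{2})$ of Definition \ref{def.hyperbolic}, which is not invariant under adding constants and is only one half of projective hyperbolicity; your appeal to $(D_{1})$ as something that "follows" from being expanding is circular, since in the paper $(D_{1})$ is an assumption used in Proposition \ref{projectiveexpanding}). Normalizing $\varphi_{|\Omega}(Q)=\sup\varphi_{|\Omega}=0$, the expanding condition reduces to $\sup_{\nu\in\mathcal{E}^{c}}h_{\nu}(G)<P(\varphi_{|\Omega})$, and your hypotheses give only $P(\varphi_{|\Omega})>0$ together with the claim $\sup_{\nu\in\mathcal{E}^{c}}h_{\nu}(G)<h(G)$ --- which itself does not follow from the measure of maximal entropy being expanding (that only rules out the supremum being \emph{attained} in $\mathcal{E}^{c}$, and $\mathcal{E}^{c}$ is not closed), and which in any case does not compare $\sup_{\nu\in\mathcal{E}^{c}}h_{\nu}(G)$ with $P(\varphi_{|\Omega})$. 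A telling symptom: your verification never uses that $Q$ is a \emph{fixed point}, only the value $\varphi_{|\Omega}(Q)$ as the supremum, and the bound $\int\varphi\,d\nu\le\sup\varphi_{|\Omega}$ holds for every potential; if the argument worked it would prove uniqueness for every H\"older potential with $\sup\varphi_{|\Omega}<P(\varphi_{|\Omega})$, which would render the theorem's dichotomy empty. The paper's mechanism is different and is precisely what is missing from your proposal: it first proves (Lemma 1 of Section \ref{Q}) that the \emph{stronger} hypothesis $\sup\phi_{|\Omega}<P(\phi_{|\Omega})-h_{\eta}(G)$ forces the expanding property, because there the problematic term $\sup_{\nu\in\mathcal{E}^{c}}h_{\nu}(G)$ is simply dominated by $h_{\eta}(G)$; and then (Lemma \ref{cohomologous}) it uses the fixed-point hypothesis through a cohomology trick, replacing $\phi_{|\Omega}$ by $(1-t)\phi_{|\Omega}+t\,\phi_{|\Omega}\circ G=\phi_{|\Omega}+u-u\circ G$ with $u=-t\phi_{|\Omega}$, which preserves pressure and equilibrium states, and choosing $t$ so that the new potential satisfies the stronger hypothesis; uniqueness for $G$ is then transported to $F$ via Theorem \ref{transfer} and \cite{RV}. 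Your proposal contains no substitute for this reduction step, so the passage from $\sup\phi_{|\Omega}=\phi_{|\Omega}(Q)<P(\phi_{|\Omega})$ to hyperbolicity of $\phi_{|\Omega}$ is not established.
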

    
	The Theorems \ref{maintheorem} and \ref{hyperbolic} of this article differs from Theorem \ref{RS2} because of the potentials in Theorem \ref{maintheorem} and \ref{hyperbolic} include examples with big variation and also the strategy used to obtain uniqueness can be applied for other classes of systems. The same can be said for our Theorem \ref{hyperbolic} in this article, by comparing with the Theorem B in \cite{RS}.
	
	The work is organized as follows:  Section~\ref{Construction of I.E} is devoted to the  construction of  the symbolic structures that allow us to make use of Sarig's theory for countable shifts and show our first main  theorem. In the Section~\ref{Proof Main Theorem} we prove the first main result and we describe new classes of potentials that are admissible in Section~\ref{Example} and fits in the hypothesis of Theorem \ref{maintheorem}. Section \ref{equivalence} is devoted to the proof of Theorem \ref{transfer}. In Section \ref{versus} we prove an equivalence between hyperbolic and expanding potentials (see definitions there), which will be useful to show that the restrictions of projective hyperbolic potentials are hyperbolic potentials. In Section \ref{Rectangles} we prove our second main result. Section \ref{Q} is devoted to prove our third main result. Finally, in Section \ref{Example} we give a family of examples of admissible potentials and Section \ref{Example Hyperbolic} gives a construction of projective hyperbolic potentials.

	\section{Constructing the Inducing Scheme}\label{Construction of I.E}
	\vspace{0.50cm}
	
	In this section, we construct an inducing scheme associated to the horseshoe map $F$ that admits a symbolic representation as a shift over a countable alphabet.
	
	We define the sequences
	
	\[
	d_{n}^{+}(w) = \frac{\sharp\{ k \ ; \ w_{k}=1
		, \ 0\leq k \leq n-1\}}{n}.
	\]
	For each $\alpha>0$ we consider  the subset
	\begin{equation}\label{Sigmaalpha}
		\Sigma_{\alpha}^{+}=\{w\in [1] ;  \  \overline{\lim_{n}} \ d^{+}_{n}(w)>\alpha\}.
	\end{equation}
	
	The set $\Sigma_{\alpha}^{+}$ is composed by the sequences  with upper frequency of  digits 1  greater than $\alpha$. 
	
	
	We define on $\Sigma_{\alpha}^{+}$ the \emph{$\alpha$-return function}
	$\rho:\Sigma_{\alpha}^{+}\rightarrow\mathbb{N}$ by
	
	\[
	\rho(w)=\min\{k > 1 \ ; \ w_{k-1}=1  \  \  \textrm{and}  \  \  \ d_{k}^{+}(w)>\alpha\}.
	\]
	Note that $\Sigma_\alpha^{+}$ can be decomposed into level sets of the function $\rho$,
	$$
	\Sigma_{\alpha}^{+}= \bigcup_{i}\Sigma_{i},
	$$ where $\Sigma_{i}$ is given by $\Sigma_i=\{w\in\Sigma_{\alpha}^{+}; \rho(w)=i\}$.
	
	Now, we consider the set 
	\[
	\Sigma_{\alpha}^{-}=\{w\in [1] \,; \, \exists \, n_{k}\rightarrow +\infty, \,  \sigma^{-(n_{1}+n_{2}+\dots +n_{k})}(w)\in \Sigma_{n_{k}}, \forall k\geq 1 \}.
	\]
	
	We define the set $\Sigma_{\alpha} :=\Sigma_{\alpha}^{+}\cap \Sigma_{\alpha}^{-}$, which is  invariant under $\sigma^{\rho}$. By using Item 2 of Theorem \ref{t.LOR}, we have that for any $\omega \in \Sigma_{\alpha}$, the set $h^{-1}(\omega)$ is a single point, i.e.,  $\Sigma_{\alpha}\subset \Sigma=h(\hat{\Lambda})$.\\
	
	We define  the \emph{induced map} $T:\Sigma_{\alpha}\rightarrow \Sigma_{\alpha}$  associated to $\rho$ as
	$$
	T(\omega)=\sigma^{\rho(\omega)}(\omega).
	$$
	
	Consider the \textit{tower} associated to $\rho$, defined by
	\begin{equation}\label{tower}
		W=\bigcup_{i>1}\bigcup^{i-1}_{k=0}\sigma^{k}(\Sigma_{i}).
	\end{equation}
	Let $\nu$ be a $T$-invariant measure such that $\int \rho \, d\nu<\infty$, then we define the $\sigma$-invariant measure, that will be called
	\textit{lifted} of $\nu$ by
	\begin{equation}\label{MP}
		\mathcal{L}(\nu)(A):= \left( \int \rho d\nu \right) ^{-1}\sum_{i>1}\sum^{i-1}_{k=0}\nu(\sigma^{-k}(A)\cap \Sigma_{i})
	\end{equation}
	for $A\subset\Sigma_{11}$.
	
	The map $\mathcal{L}$ that acts on $\mathcal{M}_{T}(W)$ is not necessarily surjective over $\mathcal{M}_{\sigma}(\Sigma_{11})$. Given a measure  $\mu\in\mathcal{M}_{\sigma}(\Sigma_{11})$, if there exists $\nu$ in $\mathcal{M}_{T}(W)$ such that $\mathcal{L}(\nu)=\mu$, then we say that $\mu$
	is a \textit{liftable measure}.
	We denote this class of measures by
	\begin{equation}\label{Me. Lif}
		\mathcal{M}_{L}(\sigma,W):=\{\mu\in\mathcal{M}_{\sigma};  \  \exists \ \nu \ , \  \mathcal{
			L}(\nu)= \mu , \,  \,  \mu(W)=1\, \}.
	\end{equation}
	Given a measurable set $A\subset\Sigma$ and $i>1$, we define
	\[
	e(i,A):=\frac{\sharp\{0\leq k \leq i-1; \ \sigma^{k}(\Sigma_{i})\cap A\neq \emptyset \}}{i}.
	\]
	The following lemma will be useful to ensure that a measure is liftable.
	\begin{theorem}[\cite{PSZlifted}, Theorem 3.1]\label{Med. Lev.}
		An ergodic $\sigma$-invariant measure $\mu$ that satisfies $\mu(W)>0$ is a liftable measure if
		there exists a number $N\geq0$ and a subset $A\subset\Sigma$ such that
		\[
		\mu(A) > \sup_{i>N}e(i,A).
		\]
	\end{theorem}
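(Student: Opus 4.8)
The plan is to construct the lifted $T$-invariant measure $\nu$ explicitly and to read off its properties from Birkhoff's ergodic theorem; the hypothesis $\mu(A)>\sup_{i>N}e(i,A)$ will be used only to guarantee that $\mu$ is carried by the tower $W$ of \eqref{tower} and that the induced time $\rho$ is $\mu$-integrable, which is exactly what the standard correspondence between $\sigma$-invariant and $T$-invariant measures (Kac's lemma) requires. First I would dispose of two easy points. Since $e(i,A)\ge 0$ for every $i$, the hypothesis already forces $\mu(A)>0$. And since $\mu$ is ergodic with $\mu(W)>0$, some column $\sigma^{k}(\Sigma_{i})$ has positive $\mu$-measure; as $\sigma$ is invertible on $\Sigma_{11}$, $\sigma$-invariance gives $\mu(\Sigma_{i})>0$, so the base $\bigcup_{i>1}\Sigma_{i}$ of the tower has positive measure and a $\mu$-generic orbit returns to it infinitely often. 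An orbit can moreover enter $W$ only through the base, since lying at the $k$-th floor ($k\ge 1$) of the $i$-th column forces its $\sigma$-preimage to lie at floor $k-1$ of the same column. Hence each sojourn of a $\mu$-generic orbit inside $W$ is a concatenation of full columns, and up to time $n$ the orbit traverses some number $M_n\to\infty$ of columns, of heights $i_1,\dots,i_{M_n}$, whose sum equals the time spent in $W$ up to an error bounded by $i_{M_n}$.

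The heart of the proof is a Kac-type bound on the visits to $A$. While the orbit climbs the $m$-th column it runs successively through $\Sigma_{i_m},\sigma(\Sigma_{i_m}),\dots,\sigma^{i_m-1}(\Sigma_{i_m})$, so it can meet $A$ at the $k$-th floor only when $\sigma^{k}(\Sigma_{i_m})\cap A\neq\emptyset$; that column therefore contributes at most $i_m\,e(i_m,A)$ visits to $A$. Summing over the first $M_n$ columns, dividing by $n$, and applying Birkhoff's theorem to $\mathbf 1_A$ gives
\[
\mu(A\cap W)\;\le\;\mu(W)\cdot\limsup_{M\to\infty}\frac{\sum_{m=1}^{M}i_m\,e(i_m,A)}{\sum_{m=1}^{M}i_m}.
\]
Splitting each sum according to whether $i_m\le N$ or $i_m>N$ (there being only finitely many heights $\le N$, while $e(i,A)\le 1$ always), the right-hand side is bounded by
\[
\mu(W)\left(\limsup_{M\to\infty}\frac{N\,\sharp\{m\le M:\ i_m\le N\}}{\sum_{m=1}^{M}i_m}\;+\;\sup_{i>N}e(i,A)\right).
\]

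Now I would argue by contradiction: if $\mu$ were not liftable then, since $\mathcal L(\nu)$ in \eqref{MP} is always supported on $W$, either $\mu(W)<1$ or $\rho$ is not $\mu$-integrable over the base, and in either case the base is visited with vanishing frequency relative to the time spent in $W$, i.e.\ $M_n=o\bigl(\sum_{m\le M_n}i_m\bigr)$. Then the first term in the last display disappears, leaving $\mu(A\cap W)\le\sup_{i>N}e(i,A)$; a parallel bookkeeping of the time the orbit spends outside $W$ (again small relative to the $W$-time in this regime) shows $\mu(A)=\mu(A\cap W)$, so $\mu(A)\le\sup_{i>N}e(i,A)$, contradicting the hypothesis. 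Consequently $\mu(W)=1$ and $\rho$ is integrable over the base $\Sigma_\alpha$ of \eqref{Sigmaalpha}; collapsing the columns of $W$, i.e.\ taking $\nu$ to be the normalization of $\mu|_{\Sigma_\alpha}$, yields by Kac's lemma a $T$-invariant probability with $\int\rho\,d\nu=1/\mu(\Sigma_\alpha)<\infty$ whose lift \eqref{MP} is a $\sigma$-invariant probability on $W$ with the same column statistics as $\mu$; ergodicity of $\mu$ then forces $\mathcal L(\nu)=\mu$, so $\mu\in\mathcal M_L(\sigma,W)$ as in \eqref{Me. Lif}, which is the assertion. The step I expect to be genuinely delicate --- and where the construction of Section~\ref{Construction of I.E} must really be used --- is making the excursion decomposition and the two Cesàro limits above rigorous: $\rho$ is not the first-return time of $\sigma$ to the base, the columns $\sigma^{k}(\Sigma_{i})$ need not be pairwise disjoint, and the orbit may wander outside $W$ between sojourns, so one has to verify that column overlaps and out-of-$W$ excursions do not perturb the limiting frequencies and do not spoil the identity $\mu(A)=\mu(A\cap W)$ in the non-liftable regime; this is precisely where the definitions of $\Sigma_\alpha^{\pm}$ and of $\rho$, together with ergodicity of $\mu$, are needed.
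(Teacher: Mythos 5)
First, a point of comparison: the paper itself does not prove this statement; it is quoted verbatim from Pesin--Senti--Zhang \cite{PSZlifted}, so your proposal has to be measured against that proof. Your central counting estimate is the right one and is indeed the engine of the PSZ argument: while a generic orbit climbs a column of height $i$ it can meet $A$ at most $i\,e(i,A)$ times, so Birkhoff's theorem applied to the indicator of $A$ together with $\mu(A)>\sup_{i>N}e(i,A)$ forces a generic orbit to spend a definite fraction of its time in columns of height at most $N$. Note also that $\mu(W)=1$ is automatic and needs none of this: $\sigma(W)\subset W$, $\sigma$ is invertible and measure preserving, hence $W$ is invariant mod $0$ and ergodicity with $\mu(W)>0$ gives $\mu(W)=1$; so that branch of your dichotomy is vacuous.

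The genuine gap is in how you pass from this estimate to liftability. First, the contradiction scheme ``if $\mu$ were not liftable then either $\mu(W)<1$ or $\rho$ is not $\mu$-integrable over the base, and in either case the base is visited with vanishing relative frequency'' is asserted, not proved: non-liftability means there is no $T$-invariant $\nu$ with $\int\rho\,d\nu<\infty$ and $\mathcal{L}(\nu)=\mu$, and this says nothing a priori about Birkhoff frequencies of $\mu$-generic orbits; you are using the converse of what you need. Second, and more seriously, the closing construction --- take $\nu$ to be the normalization of $\mu|_{\Sigma_\alpha}$ and invoke Kac's lemma to get $T$-invariance, $\int\rho\,d\nu=1/\mu(\Sigma_\alpha)$ and $\mathcal{L}(\nu)=\mu$ --- is exactly the step that fails in this setting: $T=\sigma^{\rho}$ is \emph{not} the first-return map of $\sigma$ to $\Sigma_\alpha$ (in general $\rho$ exceeds the first return time, and the sets $\sigma^{k}(\Sigma_i)$ overlap), so the normalized restriction need not be $T$-invariant, the Kac formula is false, and its lift need not be $\mu$. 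This is precisely the situation the criterion is designed to handle: in \cite{PSZlifted} the measure $\nu$ is obtained as a weak$^*$ limit of induced empirical (averaged push-forward) measures on the base, and the hypothesis on $A$ is used twice --- to prevent escape of mass to columns of unbounded height and to obtain $\int\rho\,d\nu<\infty$ --- before $\mathcal{L}(\nu)=\mu$ is identified via $\mathcal{L}(\nu)\ll\mu$ and ergodicity. You explicitly flag this construction as the ``genuinely delicate'' step and leave it unresolved; since it is the actual content of the theorem, the proposal as written is a correct heuristic outline but not a proof.
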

	Let $\varphi:\Sigma \rightarrow \mathbb{R}$ be a potential, we define the \textit{induced potential} associated to  $\varphi$ by
	\begin{equation}\label{inducedpotential}
		\varphi_{\rho}(w):=\sum^{i-1}_{k=0}\varphi(\sigma^{k}(w))
	\end{equation}
	whenever $w\in \Sigma_{i}$.
	
	The generalized Ka\v{c}-Abramov formula gives an interesting relation between the entropy of the original system and the entropy of the induced system.
	\begin{proposition}[see \cite{Z}, Theorem 5.1 and \cite{PS},Theorem 2.3]
		If $\nu$ is a measure in $\mathcal{M}_T(W)$ such that $\int\rho \, d\nu<\infty$, then
		\[
		h_{\mathcal{L}(\nu)}(\sigma)\int \rho \, d\nu=h_{\nu}(T).
		\]
		Let $\varphi$ be a potential and $\varphi_{\rho}$ its induced potential. If $\int\varphi_{\rho} \,  d\nu<\infty$ then
		\[
		\int \varphi \, d\mathcal{L}(\nu) \int \rho \, d\nu =\int \varphi_{\rho} \, d\nu.
		\]
	\end{proposition}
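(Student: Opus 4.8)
The statement bundles two classical facts about inducing schemes: an Abramov-type identity for metric entropies and a Ka\v{c}-type identity for the Birkhoff sums $\varphi_\rho$ of the potential. The potential half is essentially a bookkeeping computation straight from the definition of $\mathcal{L}(\nu)$, while the entropy half rests on the Kakutani skyscraper picture together with Abramov's theorem. The plan is to set up, once and for all, the identification of $(W,\sigma,\mathcal{L}(\nu))$ with the skyscraper built over the base $(\Sigma_\alpha,T,\nu)$ with roof function $\rho$, and then to read off both identities from it.

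\emph{The Ka\v{c} identity for the potential.} Unwinding definitions, and using that $\rho\equiv i$ on $\Sigma_i$, for $\varphi\ge 0$ one has by Tonelli
\[
\int \varphi_{\rho}\,d\nu=\sum_{i>1}\int_{\Sigma_i}\sum_{k=0}^{i-1}\varphi(\sigma^{k}w)\,d\nu(w)=\sum_{i>1}\sum_{k=0}^{i-1}\int \mathbf{1}_{\Sigma_i}\cdot(\varphi\circ\sigma^{k})\,d\nu ,
\]
and each summand equals the integral of $\varphi$ against the measure $A\mapsto\nu(\sigma^{-k}(A)\cap\Sigma_i)$; summing over $i$ and $k$ and dividing by $\int\rho\,d\nu$ reproduces $\int\varphi\,d\mathcal{L}(\nu)$ by the very definition \eqref{MP}. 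For a signed $\varphi$ one runs this first for $|\varphi|$; the hypothesis $\int\varphi_{\rho}\,d\nu<\infty$ legitimizes the interchange of sum and integral, and rearranging gives $\int\varphi\,d\mathcal{L}(\nu)\cdot\int\rho\,d\nu=\int\varphi_{\rho}\,d\nu$. Taking $\varphi\equiv 1$ in the nonnegative computation also records $\mathcal{L}(\nu)(W)=1$, which is needed below.

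\emph{The Abramov identity for entropy.} Realize $W$ as the skyscraper $\widehat W=\{(w,k):w\in\Sigma_\alpha,\ 0\le k\le\rho(w)-1\}$ via $\pi(w,k)=\sigma^{k}(w)$, with $\widehat\sigma(w,k)=(w,k+1)$ if $k+1<\rho(w)$ and $\widehat\sigma(w,k)=(Tw,0)$ if $k+1=\rho(w)$, and with $\widehat\nu$ the normalization of $\sum_{k\ge0}(\nu|_{\{\rho>k\}})\times\delta_k$. One checks that $\pi$ intertwines $\widehat\sigma$ with $\sigma|_W$ and that $\pi_*\widehat\nu=\mathcal{L}(\nu)$ — the latter being exactly formula \eqref{MP}. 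Granting that $\pi$ is a bijection modulo $\mathcal{L}(\nu)$-null sets, the two systems are isomorphic, hence $h_{\mathcal{L}(\nu)}(\sigma)=h_{\widehat\nu}(\widehat\sigma)$ (using $\mathcal{L}(\nu)(W)=1$), and Abramov's theorem for the skyscraper yields $h_{\widehat\nu}(\widehat\sigma)\,\int\rho\,d\nu=h_\nu(T)$. Equivalently one reproves Abramov inline, by lifting a finite generating partition of $(\Sigma_\alpha,T,\nu)$ to a countable partition of $W$ refined by the level sets $\{\rho=i\}$ and comparing the two entropies through the standard Rokhlin-tower estimates; in either guise this is precisely the content of \cite{Z}[Theorem 5.1], with \cite{PS}[Theorem 2.3] covering the potential half, both stated for inducing schemes with possibly unbounded roof where the $\sigma$-finite lift is normalized using $\int\rho\,d\nu<\infty$.

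\emph{Main obstacle.} The one genuinely delicate point is the essential injectivity of $\pi$, i.e.\ that the skyscraper levels $\sigma^{k}(\Sigma_i)$ ($0\le k<i$) are $\mathcal{L}(\nu)$-essentially disjoint, equivalently that for $\nu$-a.e.\ $w$ one has $\sigma^{m}(w)\notin\Sigma_\alpha$ for $0<m<\rho(w)$. Because $\rho$ is the $\alpha$-return function, not the first return to a single cylinder, this does not follow from minimality of $\rho$ alone: one must combine $\rho(w)=\min\{k>1:w_{k-1}=1,\ d_k^{+}(w)>\alpha\}$ with the definition of $\Sigma_\alpha^{-}$ — which splits the $\sigma$-past of a $\nu$-typical point into consecutive blocks $\sigma^{-(n_1+\cdots+n_k)}(w)\in\Sigma_{n_k}$ and thereby forces its whole $T$-orbit to lie in $\Sigma_\alpha$ — to rule out a re-entry into $\Sigma_\alpha$ strictly between two consecutive $\rho$-times. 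Once this is in place the rest is the routine Kakutani--Rokhlin and Abramov bookkeeping, and the integrability hypotheses $\int\rho\,d\nu<\infty$ and $\int\varphi_{\rho}\,d\nu<\infty$ are exactly what allow one to pass from the $\sigma$-finite tower measure to the probability $\mathcal{L}(\nu)$ and to invoke Fubini.
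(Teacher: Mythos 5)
The paper itself offers no proof of this proposition: it is imported verbatim from Zweim\"uller \cite{Z} (Theorem 5.1) and Pesin--Senti \cite{PS} (Theorem 2.3), so your attempt has to stand on its own. Its first half does: the Ka\v{c}-type identity is exactly the Tonelli computation against the definition \eqref{MP} of $\mathcal{L}(\nu)$, and your bookkeeping is correct. (Only a small imprecision: $\int\varphi_\rho\,d\nu<\infty$ alone does not control $\int|\varphi|_\rho\,d\nu$ for signed $\varphi$; what makes the rearrangement legitimate is that $\varphi$ is continuous on a compact space, so $|\varphi_\rho|\le\sup|\varphi|\cdot\rho$ and $\int\rho\,d\nu<\infty$ does the job.)

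The entropy half, however, has a genuine gap at exactly the point you flag and then wave through: the essential injectivity of $\pi(w,k)=\sigma^k(w)$, i.e.\ the $\mathcal{L}(\nu)$-essential disjointness of the levels $\sigma^k(\Sigma_i)$. Without it, $\pi$ is only a (countable-to-one) factor map, which yields $h_{\mathcal{L}(\nu)}(\sigma)\int\rho\,d\nu\le h_\nu(T)$ for free but not the reverse inequality, and your chain of equalities is not established. The mechanism you hint at does not close it: the definition of $\Sigma_\alpha^-$ applied to $w$ only provides a backward block decomposition for $w$ itself, whereas what must be excluded is a \emph{second, interleaved} decomposition witnessing $\sigma^m(w)\in\Sigma_\alpha^-$ for some $0<m<\rho(w)$; ``minimality of $\rho$ plus $\Sigma_\alpha^-$'' does not by itself forbid two such grids on one orbit. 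What does work is a frequency double-count: if the grids interleave, one finds a $\rho$-block $[c,e)$ of the second grid with $c<0<e<\rho(w)$; minimality of $\rho(\sigma^c w)$ at the admissible return candidate sitting at position $0$ forces the $1$-frequency of $w$ on $[c,0)$ to be $\le\alpha$, minimality of $\rho(w)$ at the candidate sitting at position $e$ forces frequency $\le\alpha$ on $[0,e)$, while the return condition for $[c,e)$ forces frequency $>\alpha$ on $[c,e)=[c,0)\cup[0,e)$ --- a contradiction (the degenerate cases $e=1$, $c=-1$ are excluded because the word $11$ is forbidden in $\Sigma_{11}$, so consecutive block endpoints/startpoints cannot be adjacent). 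Alternatively, the issue can be bypassed entirely, which is what the paper does: Zweim\"uller's theorem for general(ized) induced transformations and the Pesin--Senti inducing-scheme formalism give both identities without requiring injectivity of the level map. As submitted, though, your proof asserts rather than proves its only non-classical step.
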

	
	We define the quantity 
	\[
	P_{L}(\varphi):=\sup_{\mu\in \mathcal{M}_{L}(\sigma,W)}\{h_{\mu}(\sigma)+\int_{W}\varphi \,  d\mu\},
	\]
	that will be called \textit{relative pressure}. An invariant probability measure that attains the supremum is called  \textit{relative equilibrium state}.
	
	Now, it will be established a relation between the induced system and a countable Markov shift.
	

	Firstly, note that each $\Sigma_{k}$
	can be decomposed as a disjoint finite union of $k$-cylinders, that is, $\Sigma_{k} = \bigcup _{i=1}^{r_{k}} D_{i}^{k}$, where
	$D_{i}^{k}=[w_{0}w_{1} \dots w_{k-1}]$. 
	Let $\hat{\pi}$ be the map that sends a cylinder to a word 
	\[
	\hat{\pi}(D_{i}^{k})=w_{0}w_{1} \dots w_{k-1}.
	\]
	Using the new alphabet $S=\{D_{i}^{k} ;\,  k\geq 1,  1 \leq i \leq r_{k}\}$ we define the \emph{coding map}   $\Pi:S^{\mathbb{Z}}\rightarrow \Sigma_{\alpha}$ by the following amalgamation
	\[
	\Pi(\hat{w})=(\dots \hat{\pi}(D_{i_{k_{-1}}}^{k_{-1}}) \hat{\pi}(D_{i_{k_{0}}}^{k_{0}})\hat{\pi}(D_{i_{k_{1}}}^{k_{1}})\dots),
	\]
	where $\hat{w}=(D_{i_{k_{n}}}^{k_{n}})_{n\in\mathbb{Z}}\in S^{\mathbb{Z}}$.
	
	Observe that by construction $\Pi$ is well defined, and it is a conjugacy between $T$ and $\sigma$ (the countable Markov shift that acts on $S^{\mathbb{Z}}$). In fact, given a sequence $w\in \Sigma_{\alpha}$, by definition we have that there exists a (unique)  sequence $\hat{w}=(D_{i_{k_{n}}}^{k_{n}})_{n\in\mathbb{Z}}$ in $S^{\mathbb{Z}}$, such that $\sigma^{k_{n}}(w)\in D^{k_{n+1}}_{i_{k_{n+1}}}$ for $n\geq 0$  and $\sigma^{-k_{n}}(w)\in \Sigma_{k_{n}}$ for $n<0$ i.e.,  $\Pi(\hat{w})=w$.

	Now, we discuss the thermodynamic formalism of countable Markov shifts. The main references are \cite{PSZ}, \cite{Sar99}, \cite{Sar03} and \cite{Sar15}.
	
	Let $\sigma$ be the full shift on $S^\mathbb{Z}$ and $\Psi:S^{\mathbb{Z}} \rightarrow \mathbb{R}$ a potential, the \textit{$k$-variation}
	is defined by
	\[
	\mathrm{Var}_{k}(\Psi)=\sup_{[i_{-k+1}\dots i_{k-1}]} \ \sup_{\hat{w},\hat{s}\in [i_{-k+1}\dots i_{k-1}]}\{|\Psi(\hat{w})-\Psi(\hat{s})|\},
	\]
	where $[i_{-k+1} \dots i_{k-1}]$ is the two-sided cylinder  of all the sequences $w=(w_{i})_{i\in\mathbb{Z}}$ with $w_{-k+1}=i_{-k+i}$, \dots, $w_{0}=i_{0}, \dots, w_{k-1}=i_{k-i}$.
	
	We say that
	$\Psi$ has \textit{strongly summable variations} if
	\[
	\sum_{k\geq1}k\mathrm{Var}_{k}(\Psi)<\infty,
	\]
	and $\Psi$ is \textit{locally H\"{o}lder  continuous} if there exists $C>0$ and $a\in(0,1)$ such that for all $k\geq1$
	\[
	\mathrm{Var}_{k}(\Psi)\leq Ca^{k}.
	\]
	If $\Psi:S^{\mathbb{Z}}\rightarrow \mathbb{R}$ is locally H\"{o}lder  continuous  then the \textit{Gurevich pressure} of $\Psi$ is defined by
	\begin{equation}\label{Pre. Gur}
		P_{G}(\Psi,a):=\lim_{n\rightarrow \infty}\frac{1}{n}\log\sum_{\stackrel{\sigma^{n}(\hat{w})=\hat{w}}{\hat{w}\in [a]}}\exp(\Psi_{n}(\hat{w}))\mathcal{X}_{[a]}(\hat{w}),
	\end{equation}
	where $\mathcal{X}_{[a]}$ is the indicator function on the cylinder set $[a]$.
	
	In \cite{Sar99} it was proved that the limit \eqref{Pre. Gur} exists and it is independent from $a\in S$. We denote it by $P_{G}(\Psi)$.
	
	Let $\mathcal{M}_{\sigma}(S^{\mathbb{Z}})$ be the set of  $\sigma$-invariant Borel probabilities on $S^{\mathbb{Z}}$ and
	\[
	\mathcal{M}_{\sigma}(\Psi):=\{\eta\in\mathcal{M}_{\sigma}(S^{\mathbb{Z}});\int\Psi \, d\eta>-\infty\}.
	\]
	A measure $\eta_{\Psi}$ in $\mathcal{M}_{\sigma}(S^{\mathbb{Z}})$ is an equilibrium state for a potential $\Psi$ if satisfies
	\[
	h_{\eta_{\Psi}}(\sigma)+\int\Psi d\eta_{\Psi} = \sup_{\eta\in\mathcal{M}_{\sigma}(\Psi)}\{h_{\eta}(\sigma)+\int\Psi d\eta\}.
	\]
	We say that $\eta$ is a Gibbs measure for $\Psi$ if there exists a constant $C$ such that for a cylinder $[i_{0}\dots i_{n-1}]$ in $S^{\mathbb{Z}}$
	and  $\hat{w}\in[i_{0}\dots i_{k-1}]$ we have
	\[
	C^{-1}\leq\frac{\eta([i_{0}\dots i_{n-1}])}{e^{(\Psi_{n}(\hat{w})-nP_{G}(\Psi))}}\leq C.
	\]
	The next result can be found in \cite{PSZ}[Theorem 3.1].
	\begin{theorem}\label{Pri. Var}
		Assume that $\sup_{\hat{w}\in S^{\mathbb{Z}}}\Psi(\hat{w})<\infty$ and $\Psi$ has strongly summable variations. Then
		\begin{enumerate}
			\item the variational principle for $\Psi$ holds
			\[
			P_{G}(\Psi)=\sup_{\eta\in\mathcal{M}_{\sigma}(\Psi)}\{h_{\eta}(\sigma)+\int\Psi \, d\eta\}.
			\]
			\item If $P_{G}(\Psi)<\infty$, then there exists a unique $\sigma$-invariant ergodic Gibbs measure $\eta_{\Psi}$ for $\Psi$.
			\item If furthermore, $h_{\eta_{\Psi}}(\sigma)<\infty$, then $\eta_{\Psi}\in\mathcal{M}_{\sigma}(\Psi)$ and it is the unique
			equilibrium measure for $\Psi$.
		\end{enumerate}
	\end{theorem}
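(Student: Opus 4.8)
The plan is to deduce Theorem~\ref{Pri. Var} from Sarig's thermodynamic formalism for countable Markov shifts, after two reductions: passing from the two-sided shift to a one-sided one, and observing that the full shift on $S$ has the big-images-and-preimages (BIP) property. For \textbf{Step~1 (reduction to a one-sided shift)}, let $\pi\colon S^{\Z}\to S^{\N}$ be the projection onto the forward coordinates. The hypothesis $\sum_{k\geq1}k\,\mathrm{Var}_{k}(\Psi)<\infty$ is exactly what lets one run the telescoping argument of Sinai (cf.\ Bowen): it produces a bounded continuous $u\colon S^{\Z}\to\R$ such that $\widetilde{\Psi}:=\Psi+u-u\circ\sigma$ depends only on the forward coordinates, say $\widetilde{\Psi}=\Psi^{+}\circ\pi$, and such that $\Psi^{+}$ has summable variations on $S^{\N}$ — hence is locally H\"older there — with $\sup\Psi^{+}=\sup\widetilde{\Psi}<\infty$. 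Since $u$ is bounded, $P_{G}(\Psi)=P_{G}(\Psi^{+})$, the sets of $\sigma$-invariant probabilities are identified, $\int\Psi\,d\eta$ changes only by a telescoping term that vanishes, so equilibrium states correspond, and Gibbs measures for $\Psi$ and $\Psi^{+}$ correspond via the natural extension $S^{\N}\to S^{\Z}$. Thus it suffices to prove (1)--(3) for the one-sided full shift on the countable alphabet $S$ with the locally H\"older potential $\Psi^{+}$; this shift is topologically mixing and satisfies BIP, since every symbol maps onto every symbol in one step.

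\textbf{Step~2 (variational principle, item~1).} Exhaust $S$ by finite subsets $S_{1}\subset S_{2}\subset\cdots$ and let $X_{m}\subset S^{\N}$ be the full shift on $S_{m}$: each $X_{m}$ is a compact, topologically mixing subshift of finite type, so the classical variational principle gives $P_{\mathrm{top}}(\Psi^{+}|_{X_{m}})=\sup\{h_{\eta}(\sigma)+\int\Psi^{+}d\eta:\eta\in\mathcal{M}_{\sigma}(X_{m})\}$. By Sarig's approximation property for the Gurevich pressure of a locally H\"older potential, $P_{G}(\Psi^{+})=\sup_{m}P_{\mathrm{top}}(\Psi^{+}|_{X_{m}})$, whence $P_{G}(\Psi^{+})\leq\sup_{\eta}\{h_{\eta}(\sigma)+\int\Psi^{+}d\eta\}$. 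For the reverse inequality, fix $\eta\in\mathcal{M}_{\sigma}(\Psi^{+})$; using a countable generating partition, Abramov--Kac induction on a cylinder $[a]$, and the fact that the orbit segments returning to $[a]$ are exactly what the partition sums defining $P_{G}$ count, one gets $h_{\eta}(\sigma)+\int\Psi^{+}d\eta\leq P_{G}(\Psi^{+})$; this is where the non-compactness of $S^{\N}$ bites, and one controls the entropy and integral carried by the tail of the alphabet using $\sup\Psi^{+}<\infty$. Combining gives item~1 for $\Psi^{+}$, hence for $\Psi$.

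\textbf{Step~3 (Gibbs and equilibrium measures, items~2 and~3).} Assume $P_{G}(\Psi)=P_{G}(\Psi^{+})<\infty$. Since the one-sided full shift on $S$ has BIP, Sarig's existence theorem for Gibbs measures applies: $\Psi^{+}$ is positive recurrent, and the Generalized Ruelle--Perron--Frobenius theorem yields $\lambda=e^{P_{G}(\Psi^{+})}$, a conservative conformal measure $\nu$ with $L_{\Psi^{+}}^{*}\nu=\lambda\nu$, and a positive continuous $h$ with $L_{\Psi^{+}}h=\lambda h$, $\int h\,d\nu=1$; then $\eta_{\Psi^{+}}:=h\,\nu$ is $\sigma$-invariant, ergodic, and Gibbs for $\Psi^{+}$, and any Gibbs measure has every cylinder mass pinned down up to the uniform Gibbs constant, so it is unique. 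Pulling back through the natural extension proves item~2. For item~3, suppose $h_{\eta_{\Psi}}(\sigma)<\infty$: the Gibbs bounds together with $\sup\Psi<\infty$ and $P_{G}(\Psi)<\infty$ force $\int\Psi\,d\eta_{\Psi}$ to be finite, so $\eta_{\Psi}\in\mathcal{M}_{\sigma}(\Psi)$, and the Gibbs property with the Shannon--McMillan--Breiman theorem gives $h_{\eta_{\Psi}}(\sigma)+\int\Psi\,d\eta_{\Psi}=P_{G}(\Psi)$, so by item~1 it is an equilibrium state. Any equilibrium state $\mu\in\mathcal{M}_{\sigma}(\Psi)$ of finite entropy transfers to a finite-entropy measure attaining $h_{\mu}+\int\Psi^{+}d\mu=P_{G}(\Psi^{+})$; comparing $\mu$ with the conformal pair $(h,\nu)$ by the relative-entropy (Ruelle-inequality) argument of Buzzi--Sarig for BIP shifts shows $\mu\ll\eta_{\Psi^{+}}$, hence $\mu=\eta_{\Psi^{+}}$ by ergodicity, and pulling back gives uniqueness of the equilibrium measure for $\Psi$.

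\textbf{Main obstacle.} The two genuinely delicate points, beyond invoking Sarig's package, are: (i) performing the two-sided-to-one-sided reduction so the transferred potential stays locally H\"older — this is precisely where the \emph{strong} summability $\sum_{k}k\,\mathrm{Var}_{k}(\Psi)<\infty$ is used, rather than mere summability of $\mathrm{Var}_{k}(\Psi)$; and (ii) the ``easy half'' of the variational principle and the passage from a Gibbs measure to the unique equilibrium state, both of which require controlling the escape of entropy and of $\int\Psi$ into the non-compact part of the alphabet — the essential difference from the classical finite-alphabet theory. Everything else is a routine application of the countable-shift machinery once BIP and $P_{G}(\Psi)<\infty$ are in hand.
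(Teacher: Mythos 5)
Your proposal cannot be compared line-by-line with a proof in the paper, because the paper offers none: Theorem~\ref{Pri. Var} is imported verbatim from [PSZ, Theorem 3.1] (Pesin--Senti--Zhang), which in turn rests on Sarig's one-sided countable-shift formalism and the Buzzi--Sarig uniqueness theorem. What you wrote is essentially a reconstruction of that standard route -- reduce the two-sided shift to a one-sided full shift by the Sinai/Bowen coboundary trick (this is exactly where the strong summability $\sum_k k\,\mathrm{Var}_k(\Psi)<\infty$ is needed, as you correctly identify), then invoke Sarig's variational principle, the BIP existence/uniqueness theorem for Gibbs measures, and the finite-entropy argument for the unique equilibrium state, transporting everything back through the natural extension since the correcting function $u$ is bounded. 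So the approach is the right one and is, for practical purposes, the proof behind the citation.

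One inaccuracy should be fixed: after the reduction you assert that $\Psi^{+}$ has summable variations ``hence is locally H\"older''; the implication goes the wrong way, since summable variations is strictly weaker than the exponential decay $\mathrm{Var}_k\leq Ca^{k}$, and indeed strong summability of $\Psi$ only yields $\mathrm{Var}_n(\Psi^{+})\lesssim\sum_{k\geq n/2}\mathrm{Var}_k(\Psi)$, which is summable but need not decay exponentially. This is harmless for the theorem as stated, because Sarig's variational principle, the BIP Gibbs theorem and Buzzi--Sarig uniqueness all require only summable variations (plus $\sup\Psi^{+}<\infty$ and finite Gurevich pressure), so you should simply delete the H\"older claim rather than rely on it; but be aware that local H\"olderness does matter elsewhere in the paper (e.g.\ Proposition~\ref{CLT}), so the distinction is not cosmetic. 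Apart from that, the sketchy parts (the ``hard half'' of the variational principle in Step~2, the equivalence-plus-ergodicity argument for uniqueness of the Gibbs measure) are precisely the content of Sarig's published theorems, so leaving them at citation level is consistent with how the paper itself treats this result.
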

	We say that an invariant Borel probability measure $\eta$ associated to the continuous transformation  $T$ has
	\textit{exponential decay of correlations} for a class $\mathcal{H}$ of functions
	if there exists $0<\theta<1$ such that, for any $h_{1}, h_{2}\in \mathcal{H}$,
	\[
	\Big| \int h_{1}(T^{n}(x))h_{2}(x) \, d\eta - \int h_{1}(x) \, d\eta \int h_{2}(x) \, d\eta \Big| \leq K\theta^{n},
	\]
	for some $K=K(h_{1},h_{2})>0$. \\
	The measure $\eta$ satisfies the \textit{central limit theorem} (CLT) for functions in $\mathcal{H}$ if there exist $\sigma \in \R$ such that $\frac{1}{\sqrt{n}} \sum^{n-1}_{i=0}(h(T^{i}(x))- \int h \,  d\eta)$ converges in law to a normal distribution $\mathcal{N}(0,\sigma)$ for any $h\in \mathcal{H}$. 
		\begin{proposition}[\cite{PSZ} Theorem 4.7]\label{CLT}
			Assume that $P_{G}(\Psi)$, $\sup_{\hat{w}\in S^{\mathbb{Z}}}(\Psi(\hat{w}))<\infty$ and that $\Psi$ is locally H\"{o}lder  continuous.
			If $h_{\eta_{\Psi}}(\sigma)<\infty$, then the measure $\eta_{\Psi}$ has exponential decay of correlations and satisfies the
			CLT with respect to the class of bounded locally H\"{o}lder continuous.
	\end{proposition}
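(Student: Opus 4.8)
Since Proposition \ref{CLT} is quoted verbatim from \cite{PSZ}[Theorem 4.7], the plan is to recall the transfer-operator argument behind it rather than to reprove it in full. The key structural point in our setting is that $\sigma\colon S^{\mathbb{Z}}\to S^{\mathbb{Z}}$ is a \emph{full} shift, so the ``big images and preimages'' condition required by Sarig's theory holds automatically. First I would reduce to the one-sided shift $\sigma_{+}\colon S^{\mathbb{N}}\to S^{\mathbb{N}}$: by Sinai's lemma a locally H\"older potential on $S^{\mathbb{Z}}$ is cohomologous, through a bounded locally H\"older transfer function, to a potential $\Psi_{+}$ depending only on the non-negative coordinates, and the bounded locally H\"older test functions $h_{1},h_{2}$ may likewise be replaced by functions of the future up to bounded locally H\"older coboundaries; since neither exponential decay of correlations nor the CLT for Birkhoff sums is affected by such coboundaries, it suffices to treat $(\sigma_{+},\Psi_{+})$.

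On the one-sided full shift I would invoke Sarig's Ruelle--Perron--Frobenius theorem: under $\sup\Psi<\infty$, $P_{G}(\Psi)<\infty$ and local H\"older continuity there are a positive locally H\"older eigenfunction $\Phi$ and a conformal eigenmeasure $m$ with $\eta_{\Psi}=\Phi\,m$, and the normalized transfer operator
\[
\widehat{\LL}g(x)=\sum_{\sigma_{+}(y)=x}e^{\Psi_{+}(y)+\log\Phi(y)-\log\Phi(x)-P_{G}(\Psi)}\,g(y)
\]
satisfies $\widehat{\LL}1=1$, has $\eta_{\Psi}$ as the fixed point of its dual, and, crucially, enjoys a \emph{spectral gap} on the space of bounded locally H\"older observables: there are $C>0$ and $\theta\in(0,1)$ with $\big\|\widehat{\LL}^{\,n}g-\int g\,d\eta_{\Psi}\big\|\le C\theta^{n}\|g\|$. (The hypothesis $h_{\eta_{\Psi}}(\sigma)<\infty$ is used, through Theorem \ref{Pri. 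Var}, to know that $\eta_{\Psi}$ is the actual equilibrium state.) Exponential decay of correlations then follows at once from the identity
\[
\int (h_{1}\circ\sigma_{+}^{\,n})\,h_{2}\,d\eta_{\Psi}-\int h_{1}\,d\eta_{\Psi}\int h_{2}\,d\eta_{\Psi}=\int h_{1}\Big(\widehat{\LL}^{\,n}h_{2}-\int h_{2}\,d\eta_{\Psi}\Big)\,d\eta_{\Psi}
\]
combined with the gap estimate, after approximating the bounded locally H\"older function $h_{2}$ in the relevant norm.

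For the CLT I would run the Nagaev--Guivarc'h perturbation scheme. With $\psi=h_{1}-\int h_{1}\,d\eta_{\Psi}$, consider the twisted operators $\widehat{\LL}_{t}g=\widehat{\LL}(e^{it\psi}g)$, which form an analytic family near $t=0$; the spectral gap persists for small $t$ and produces a simple leading eigenvalue $\lambda(t)$ with $\lambda(0)=1$, $\lambda'(0)=0$ and $\lambda''(0)=-\Sigma^{2}$, where $\Sigma^{2}=\int\psi^{2}\,d\eta_{\Psi}+2\sum_{k\ge1}\int\psi\,(\psi\circ\sigma_{+}^{\,k})\,d\eta_{\Psi}\ge0$ converges thanks to the decay of correlations just obtained. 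Routine Fourier-analytic bookkeeping then gives $\int e^{it\,n^{-1/2}\sum_{j=0}^{n-1}\psi\circ\sigma_{+}^{\,j}}\,d\eta_{\Psi}\to e^{-\Sigma^{2}t^{2}/2}$, i.e.\ convergence in law to $\mathcal{N}(0,\Sigma^{2})$; alternatively one may use Gordin's martingale approximation $\psi=u-u\circ\sigma_{+}+m$ with $u=\sum_{k\ge0}\widehat{\LL}^{\,k}\psi$, bounded because of the gap, together with the martingale central limit theorem.

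The one genuinely hard ingredient --- and the reason the statement is cited rather than reproved --- is Sarig's RPF theorem itself: obtaining a true spectral gap, not merely quasi-compactness, for $\widehat{\LL}$ on the locally H\"older space over a countable alphabet, which is where the full-shift structure and sharp control of the variations $\mathrm{Var}_{k}$ are essential, and where one must also accommodate the fact that the test functions are only \emph{bounded} locally H\"older by density arguments and the uniform Gibbs bounds from Theorem \ref{Pri. Var}. Granting that input, the passage to exponential decay of correlations and to the CLT is the standard spectral package sketched above and requires only routine bookkeeping.
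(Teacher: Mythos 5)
This proposition is imported verbatim from \cite{PSZ} (Theorem 4.7); the paper gives no proof of it, so there is nothing internal to compare your argument against line by line. Your sketch is the standard argument underlying such statements and is sound as an outline: reduction to the one-sided shift by Sinai's coboundary trick (which does work for locally H\"older potentials on a countable alphabet, the transfer function being bounded by $\sum_k \mathrm{Var}_k$), Sarig's Ruelle--Perron--Frobenius theorem with spectral gap for the normalized transfer operator --- where you correctly identify that the full-shift (BIP) structure is what makes the gap available and that this is the genuinely hard, cited ingredient --- and then the Nagaev--Guivarc'h perturbation scheme or Gordin's martingale approximation for the CLT, with the correlation-decay identity handled through $\widehat{\LL}$. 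It is worth noting that where the present paper actually needs statistical properties downstream (in the proof of Theorem \ref{Uni.Esq.}), it does not argue on the countable shift at all: it verifies an exponential tail for the inducing time (Lemma \ref{III}) and invokes Young's tower results \cite{Y} for the projected measure $\mathcal{L}(\nu)$. So your transfer-operator route and the paper's tower route are the two standard, complementary ways to such results; yours works directly with $\eta_{\Psi}$ on $S^{\mathbb{Z}}$, the paper's with the lifted measure. Two minor caveats if you were to flesh your sketch out: the asymptotic variance $\Sigma^{2}$ may vanish, in which case the limit law is degenerate (consistent with the paper's formulation of the CLT), and one should say explicitly that the spectral gap is applied on a Banach space of bounded locally H\"older functions, since local H\"older control alone does not bound the sup norm on a non-compact shift space.
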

	
	
	\section{Proof of Theorem \ref{maintheorem}}\label{Proof Main Theorem}
	\vspace{0.50cm}
	
	The proof of Theorem \ref{maintheorem} 
	is divided into two steps. The first one  is to show the uniqueness of equilibrium state for the induced map and the second one is to obtain the uniqueness for the horseshoe map.\\
	
	Consider the constant $\alpha \in(0,\frac{2}{3})$ and the inducing scheme $(\rho,\Sigma_{\alpha})$, obtained as in the Section \ref{Construction of I.E}, the next
	result gives us sufficient condition for a potential $\varphi$ to admit a unique relative equilibrium state.
	\begin{theorem}\label{Uni.Esq.}
		Let $\varphi:\Sigma \rightarrow \mathbb{R}$ be a continuous potential and $\varphi_\rho$ be the induced potential satisfying the following conditions: 
		\begin{itemize}
			\item[$(P_{1})$] The potential  $ \Psi:=\varphi_\rho\circ \Pi$ is locally H\"{o}lder  continuous.
			\item[$(P_{2})$] There exist a natural $n\in \mathbb{N}$ such that $P_{L}(\varphi)>\sup\frac{\varphi_{n}}{n}$.
		\end{itemize}
		Then, there exists a unique relative equilibrium state $\nu_{\varphi}$. Moreover, the measure $\nu_{\varphi}$ has exponential decay of correlations and satisfies the
		(CLT) with respect to class of functions whose induced functions on $\Sigma_{\alpha}$ are
		bounded H\"{o}lder  continuous.
	\end{theorem}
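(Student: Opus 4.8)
The plan is to transfer everything to the countable full shift $\sigma: S^{\mathbb{Z}} \to S^{\mathbb{Z}}$ via the conjugacy $\Pi$ and apply the Sarig-type machinery of Theorem~\ref{Pri. Var} and Proposition~\ref{CLT} to the potential $\Psi = \varphi_\rho \circ \Pi$. First, I would record that under $(P_1)$ the potential $\Psi$ is locally H\"older continuous, hence in particular it has strongly summable variations, so Theorem~\ref{Pri. Var} is applicable once we verify the two remaining hypotheses: $\sup_{\hat{w} \in S^{\mathbb{Z}}} \Psi(\hat{w}) < \infty$ and $P_G(\Psi) < \infty$. The finiteness of $\sup \Psi$ should follow from boundedness of the original continuous potential $\varphi$ together with a uniform control: on $\Sigma_i$ the induced potential $\varphi_\rho$ is a Birkhoff sum of length $i$, but the local H\"older estimate $\mathrm{Var}_k(\Psi) \le C a^k$ from $(P_1)$ already forces $\sup \Psi < \infty$ because $\Psi$ restricted to any one-symbol cylinder has bounded oscillation and, combined with the structure of the alphabet, the suprema over cylinders cannot blow up (this is exactly the point where condition $(C_1)/(P_1)$ is doing work). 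For $P_G(\Psi) < \infty$: the Gurevich pressure is bounded above by the relative pressure $P_L(\varphi)$ via the Ka\v{c}--Abramov formula, because any $\sigma$-invariant Gibbs-type measure on $S^{\mathbb{Z}}$ pushes, through $\Pi$ and the lift $\mathcal{L}$, to an element of $\mathcal{M}_L(\sigma, W)$; and $P_L(\varphi)$ is finite because the entropy of the shift is finite and $\varphi$ is bounded. Actually the cleaner route is to show directly that $P_G(\Psi)$ is finite by a counting estimate on periodic orbits in a fixed cylinder $[a]$, using $\Psi_n(\hat{w}) \le n \sup \Psi$ and that the number of admissible words of length $n$ in the $D_i^k$ alphabet grows at most exponentially (inherited from the subshift $\Sigma_{11}$ having finite topological entropy).

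Next I would invoke Theorem~\ref{Pri. Var}: parts (1)--(3) give the variational principle $P_G(\Psi) = \sup_{\eta \in \mathcal{M}_\sigma(\Psi)}\{h_\eta(\sigma) + \int \Psi\, d\eta\}$, a unique $\sigma$-invariant ergodic Gibbs measure $\eta_\Psi$, and — provided $h_{\eta_\Psi}(\sigma) < \infty$ — that $\eta_\Psi$ is the unique equilibrium state for $\Psi$. The hypothesis $h_{\eta_\Psi}(\sigma) < \infty$ holds since the Gibbs measure has entropy bounded by the (finite) topological entropy of $\Sigma_{11}$. Then I would push $\eta_\Psi$ forward: set $\hat{\nu} = \Pi_* \eta_\Psi$, a $T$-invariant measure on $\Sigma_\alpha$, and check that $\int \rho\, d\hat{\nu} < \infty$. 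This last integrability is where condition $(P_2)$ enters: the Gibbs property of $\eta_\Psi$ controls the measure of the level cylinders $\Sigma_i$ (equivalently the symbols $D_i^k$ of length $k=i$) in terms of $e^{\Psi - P_G(\Psi)}$, and the gap $P_L(\varphi) > \sup \varphi_n/n$ — which, after the reduction of Remark~\ref{Remark2} to $n=1$, reads $P_L(\varphi) > \sup \varphi$ — yields exponential decay of $\hat{\nu}(\rho = i)$ in $i$, hence $\int \rho\, d\hat{\nu} < \infty$ and even $\int \varphi_\rho\, d\hat{\nu} > -\infty$. Feeding $\hat{\nu}$ through the Ka\v{c}--Abramov/Abramov formula of the Proposition in Section~\ref{Construction of I.E}, the lift $\nu_\varphi := \mathcal{L}(\hat{\nu})$ is a well-defined $\sigma$-invariant probability with $h_{\nu_\varphi}(\sigma)\int\rho\,d\hat\nu = h_{\hat\nu}(T)$ and $\int \varphi\, d\nu_\varphi \int \rho\, d\hat\nu = \int \varphi_\rho\, d\hat\nu$, and one checks it realizes the relative pressure $P_L(\varphi)$ and lies in $\mathcal{M}_L(\sigma, W)$.

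For uniqueness of the relative equilibrium state I would argue by contradiction/lifting: any relative equilibrium state $\mu$ for $\varphi$ is by definition liftable, so $\mu = \mathcal{L}(\nu)$ for some $T$-invariant $\nu$ with $\int \rho\, d\nu < \infty$; transporting $\nu$ back through $\Pi^{-1}$ gives a $\sigma$-invariant measure on $S^{\mathbb{Z}}$, and the two Abramov identities show it is an equilibrium state for $\Psi$ (the normalization $\int \rho\, d\nu$ is common to numerator and denominator, so maximizing $h_\mu(\sigma) + \int\varphi\,d\mu$ over liftable $\mu$ is equivalent, after multiplying by $\int\rho\,d\nu$, to maximizing $h_\nu(T) + \int\varphi_\rho\,d\nu$, i.e. to maximizing $h + \int\Psi$ over $S^{\mathbb{Z}}$). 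By the uniqueness clause of Theorem~\ref{Pri. Var}(3) this forces the pullback to equal $\eta_\Psi$, hence $\mu = \nu_\varphi$. Finally, the statistical properties are immediate from Proposition~\ref{CLT}: $\eta_\Psi$ has exponential decay of correlations and satisfies the CLT for bounded locally H\"older continuous observables on $S^{\mathbb{Z}}$, and via the conjugacy $\Pi$ these translate to the stated properties of $\nu_\varphi$ for functions whose induced versions on $\Sigma_\alpha$ are bounded H\"older continuous.

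I expect the main obstacle to be the integrability step $\int \rho\, d\hat{\nu} < \infty$ and the accompanying verification that the pushforward/lift genuinely attains $P_L(\varphi)$ — in other words, extracting from the pressure-gap condition $(P_2)$ a quantitative tail bound on the return-time distribution under the Gibbs measure. The Gibbs inequality gives $\hat\nu(\Sigma_i) \asymp \sum_{D_j^i} e^{\Psi(D_j^i) - i P_G(\Psi)}$, and one needs $\sum_{D_j^i} e^{\Psi(D_j^i)} \lesssim e^{i(\sup\varphi + o(i)/i)}$ while $P_G(\Psi) \ge$ (something comparable to $P_L(\varphi) > \sup\varphi$), so the ratio decays geometrically; making the comparison between $P_G(\Psi)$ and $P_L(\varphi)$ precise, and handling the bookkeeping of how many symbols $D_j^i$ of "length" $i$ there are, is the delicate combinatorial heart of the argument. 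Everything else is a fairly mechanical application of the cited countable-shift theorems together with the Abramov–Ka\v{c} correspondence.
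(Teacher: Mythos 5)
Your overall architecture is the same as the paper's (transfer to $S^{\mathbb{Z}}$ via $\Pi$, apply Theorem~\ref{Pri. Var} to $\Psi$, push the Gibbs measure forward, get $\int\rho\,d\nu<\infty$ from the Gibbs property plus the pressure gap, then Abramov--Ka\v{c} and liftability for uniqueness of the relative equilibrium state), but two of the steps you sketch would fail as stated. First, $\sup\Psi<\infty$ does \emph{not} follow from $(P_1)$: local H\"older continuity only controls variations inside cylinders, while on $\Sigma_i$ the induced potential is a Birkhoff sum of length $i$, so if $\sup\varphi>0$ then $\sup_{\Sigma_i}\varphi_\rho$ grows linearly in $i$ and $\sup\Psi=+\infty$. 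The paper avoids this by first replacing $\varphi$ by the normalized potential $\varphi-P_L(\varphi)$; then $(P_2)$ (after the reduction to $n=1$) gives $\varphi_\rho(w)\le -i\epsilon_0$ on $\Sigma_i$, which is what makes $\sup\Psi<\infty$ and, crucially, makes $\sum_i i\sup_{\Sigma_i}e^{\varphi_\rho+i\epsilon}<\infty$. Second, your ``cleaner route'' to $P_G(\Psi)<\infty$ --- bounding the periodic-orbit sum by $e^{n\sup\Psi}$ times an exponentially growing number of admissible words --- breaks down over the countable alphabet $S$: there are infinitely many words of each length $n$ in $S^{\mathbb{Z}}$ (one for every choice of symbols $D^k_i$ with arbitrary $k$), so no finite-entropy count of words is available. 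The finiteness must come from summability over the alphabet, i.e. from the estimate $\sum_{\text{per}, [a]}e^{\Psi_n}\le\bigl(\sum_i\sup_{\Sigma_i}e^{\varphi_\rho}\bigr)^n$, whose right-hand side is finite only because of the normalized pressure-gap bound above; this is exactly the paper's first Claim. Relatedly, the combinatorial point you flag at the end (counting the $r_n$ symbols of length $n$) is resolved in the paper by choosing $\alpha$ so small that $\limsup\frac1n\log r_n\le c(\alpha)<P_L(\varphi)-\sup\varphi$; this choice is part of the setup, not an afterthought, since it feeds into $\int\rho\,d\nu<\infty$.

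Two further points are glossed over. Your uniqueness argument (``the normalization $\int\rho\,d\nu$ is common to numerator and denominator'') is not a literal cancellation, because different liftable measures have different values of $\int\rho\,d\nu$; what makes the correspondence between relative equilibrium states for $\varphi$ and equilibrium states for $\Psi$ work is the normalization by $P_L(\varphi)$ together with positive recurrence of $\Psi$ (finiteness of $P_G(\varphi_\rho+\delta\rho)$), which is how the paper invokes \cite{PS}. Finally, the exponential decay of correlations and the CLT for $\nu_\varphi=\mathcal{L}(\nu)$ concern the \emph{original} shift $\sigma$ on $\Sigma_{11}$, and they do not ``translate via the conjugacy'' from Proposition~\ref{CLT}, which only gives properties of $\eta_\Psi$ for the induced dynamics; the paper deduces them from the exponential tail $\nu(\rho\ge m)\le C\vartheta^m$ (which you essentially have) together with Young's tower theorems \cite{Y}. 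With these corrections your proposal matches the paper's proof; as written, the justification of $\sup\Psi<\infty$, the finiteness of the Gurevich pressure, and the passage to statistical properties of the lifted measure are genuine gaps.
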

	
	Note that the pressure $P_{L}(\varphi)$ is finite, so we take $\varphi_\rho=\overline{\varphi-P_{L}(\varphi)}$.
	\begin{claim}\label{afirmation1}
		If $\varphi$ satisfies $(P_{2})$, then there exists a constant $\epsilon>0$ such that
		\begin{equation}\label{I}
			\sum_{i>1}i\sup_{w\in \Sigma_{i}}e^{\varphi_\rho(w)+i\epsilon}< \infty.
		\end{equation}
	\end{claim}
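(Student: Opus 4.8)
The plan is to extract from condition $(P_2)$ a uniform gap between the relative pressure and the Birkhoff averages of $\varphi$, and then push this gap through the induced potential using a sub-additivity estimate on the return times. By Remark 2.2 (the analogue for the symbolic side) we may assume $n=1$, so $(P_2)$ reads $P_L(\varphi) > \sup_{w\in\Sigma}\varphi(w)$; call this difference $\delta := P_L(\varphi) - \sup\varphi > 0$. Recall that we have already normalized $\varphi_\rho = \overline{\varphi - P_L(\varphi)}$, i.e. the induced potential is computed for the potential $\varphi - P_L(\varphi)$, whose supremum is $-\delta$. Hence for $w\in\Sigma_i$,
\[
\varphi_\rho(w) = \sum_{k=0}^{i-1}\bigl(\varphi(\sigma^k w) - P_L(\varphi)\bigr) \leq -i\,\delta.
\]
This is the key pointwise bound: the induced (normalized) potential decays at least linearly in the return time $i$, with rate $\delta$.

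Next I would choose $\epsilon := \delta/2 > 0$ and estimate the series directly. From the bound above, $\sup_{w\in\Sigma_i} e^{\varphi_\rho(w)+i\epsilon} \leq e^{-i\delta + i\delta/2} = e^{-i\delta/2}$, so
\[
\sum_{i>1} i\,\sup_{w\in\Sigma_i} e^{\varphi_\rho(w)+i\epsilon} \;\leq\; \sum_{i>1} i\, e^{-i\delta/2} \;<\; \infty,
\]
the last series converging because $\sum_{i} i r^i < \infty$ for any $r=e^{-\delta/2}\in(0,1)$. This proves \eqref{I} with $\epsilon = \delta/2$.

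The only genuinely delicate point — and where I expect the argument actually needs care — is justifying the passage to $n=1$ and, more importantly, justifying that $\sup_w \varphi(w)$ (the global supremum of $\varphi$ on $\Sigma$, or at least on the tower $W$) is the right quantity to compare against $P_L(\varphi)$, since $(P_2)$ is stated in terms of $\sup \varphi_n/n$ and $P_L$ is a \emph{relative} pressure (a sup over liftable measures supported on $W$). If $(P_2)$ with general $n$ does not immediately reduce to the case $n=1$ for the relative pressure in the way Remark 2.2 handles the topological pressure, one should instead argue: for $w\in\Sigma_i$ with $i$ a multiple of $n$ (or up to a bounded error term absorbed into the constant), $\varphi_\rho(w) = \varphi_i(w) - i P_L(\varphi) \leq i\bigl(\sup \varphi_n/n\bigr) - i P_L(\varphi) + O(1)$, and then $(P_2)$ gives the linear decay with rate $\delta = P_L(\varphi) - \sup\varphi_n/n > 0$, the $O(1)$ boundary correction being harmless since it only changes the constant in front of the geometric series. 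Either way the conclusion is the same: choose $\epsilon$ strictly between $0$ and this $\delta$, and the series is dominated by $\sum_i i\,e^{-i(\delta-\epsilon)} < \infty$.
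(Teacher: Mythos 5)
Your argument is correct and matches the paper's own proof essentially step for step: both use Remark~\ref{Remark2} to reduce $(P_2)$ to a uniform gap $\epsilon_0=P_L(\varphi)-\sup\varphi>0$, deduce $\varphi_\rho(w)\leq -i\epsilon_0$ on $\Sigma_i$ for the normalized potential $\overline{\varphi-P_L(\varphi)}$, and then choose $\epsilon<\epsilon_0$ so that the series is dominated by $\sum_i i e^{-ci}<\infty$. Your explicit choice $\epsilon=\delta/2$ and the remark on the $n>1$ case are just slightly more careful versions of the same argument.
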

	\begin{proof}[Proof of Claim]
		By the property $(P_{2})$ and Remark \ref{Remark2} we get $\epsilon_{0}>0$ such that $P_{L}(\varphi)>\sup\varphi +\epsilon_{0}$ and thus for $P:=P_{L}(\varphi)$, $w\in\Sigma_{i}$ and all $i$ it occurs that
		\[
		-\epsilon_{0}> \frac{\sum^{i-1}_{k=0}\varphi(\sigma^{k}(w))}{i}-P.
		\]
		This implies that $\sum^{i-1}_{k=0}\varphi(\sigma^{k}(w))-iP<-i\epsilon_{0}$
		and $
		e^{\varphi_\rho(w)}<e^{-i\epsilon_{0}}$.
		Since the estimate is independent of $w$, it is possible to obtain $\epsilon$ small enough such that
		\begin{equation}\label{4}
			\sup\{e^{\varphi_\rho(w)+ i \epsilon} ; w\in \Sigma_{i}\}\leq e^{-i\epsilon_{0}}.
		\end{equation}
		Hence, by summing the expression \eqref{4} over $i$ we get
		\[
		\sum_{i}i \sup\{e^{\varphi_\rho(w) +  i\epsilon} ; w\in \Sigma_{i}\}\leq \sum_{i}ie^{-i\epsilon_{0}} < +\infty.
		\]
		Thus proving the Claim \ref{afirmation1}.
	\end{proof}
	We now consider the potential $\Psi=\varphi_\rho\circ\Pi$. 
	\begin{claim}\label{afirmation2}
		If $\varphi$ satisfies $(P_{1})$ and $(P_{2})$, then $\Psi$ the  Gurevich pressure is finite. 
	\end{claim}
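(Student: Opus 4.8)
The plan is to prove the sharper statement $P_{G}(\Psi)\le 0$, and to do so through the variational principle for countable Markov shifts (Theorem \ref{Pri. Var}(1)) rather than by estimating partition functions directly: a naive bound such as $P_{G}(\Psi)\le\log\sum_{b\in S}\sup_{[b]}e^{\Psi}$ does not close here, because the number $r_{i}$ of $i$-cylinders making up $\Sigma_{i}$ grows exponentially in $i$, so the uniform decay $e^{\varphi_{\rho}}\le e^{-i\epsilon_{0}}$ on $\Sigma_{i}$ would only suffice under a small-variation hypothesis on $\varphi$, which we are deliberately avoiding.

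First I would verify the hypotheses of Theorem \ref{Pri. Var}(1). The computation in the proof of Claim \ref{afirmation1} shows $\varphi_{\rho}(w)<-i\epsilon_{0}<0$ for every $w\in\Sigma_{i}$, where $\epsilon_{0}=P_{L}(\varphi)-\sup\varphi>0$ and $\varphi_{\rho}$ now denotes the induced potential of $\varphi-P_{L}(\varphi)$; hence $\Psi=\varphi_{\rho}\circ\Pi<0$ on $S^{\mathbb{Z}}$, so in particular $\sup_{\hat w}\Psi(\hat w)\le 0<\infty$. Moreover $(P_{1})$ gives $C>0$ and $a\in(0,1)$ with $\mathrm{Var}_{k}(\Psi)\le Ca^{k}$, so $\sum_{k\ge 1}k\,\mathrm{Var}_{k}(\Psi)\le C\sum_{k\ge 1}ka^{k}<\infty$, i.e.\ $\Psi$ has strongly summable variations. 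Theorem \ref{Pri. Var}(1) then gives $P_{G}(\Psi)=\sup_{\eta\in\mathcal{M}_{\sigma}(\Psi)}\{h_{\eta}(\sigma)+\int\Psi\,d\eta\}$, and it is enough to show that each term of this supremum is $\le 0$ (the supremum being over a non-empty set, since $\Psi<0$ is bounded and the fixed point of $\sigma$ carries an invariant measure with finite integral of $\Psi$).

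Next, fix $\eta\in\mathcal{M}_{\sigma}(\Psi)$, so $\int\Psi\,d\eta>-\infty$. Since $\Pi$ is a topological conjugacy between the countable full shift $\sigma$ and $T=\sigma^{\rho}$, the pushforward $\nu:=\Pi_{*}\eta$ is a $T$-invariant probability on $\Sigma_{\alpha}$ with $h_{\nu}(T)=h_{\eta}(\sigma)$ and $\int\varphi_{\rho}\,d\nu=\int\Psi\,d\eta$. On each $\Sigma_{i}$ one has $\rho\equiv i$ and $\varphi_{\rho}<-\epsilon_{0}\rho$, so
\[
-\infty<\int\Psi\,d\eta=\int\varphi_{\rho}\,d\nu\le-\epsilon_{0}\int\rho\,d\nu,
\]
which forces $\int\rho\,d\nu<\infty$. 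Hence $\mu:=\mathcal{L}(\nu)$ is a well-defined $\sigma$-invariant probability on $\Sigma_{11}$ with $\mu(W)=1$, so $\mu\in\mathcal{M}_{L}(\sigma,W)$, and the generalized Ka\v{c}--Abramov formula yields $h_{\eta}(\sigma)=h_{\mu}(\sigma)\int\rho\,d\nu$ together with $\int\Psi\,d\eta=\int\varphi_{\rho}\,d\nu=\big(\int\varphi\,d\mu-P_{L}(\varphi)\big)\int\rho\,d\nu$. Therefore
\[
h_{\eta}(\sigma)+\int\Psi\,d\eta=\Big(h_{\mu}(\sigma)+\int\varphi\,d\mu-P_{L}(\varphi)\Big)\int\rho\,d\nu\le 0,
\]
since $h_{\mu}(\sigma)+\int\varphi\,d\mu\le P_{L}(\varphi)$ by the definition of the relative pressure and $\int\rho\,d\nu\ge 2>0$. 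Taking the supremum over $\eta$ gives $P_{G}(\Psi)\le 0$, in particular finite.

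The one step carrying the real content is the integrability $\int\rho\,d\nu<\infty$: it is precisely where $(P_{2})$ enters, through the uniform exponential estimate $\varphi_{\rho}\le-\epsilon_{0}\rho$ established in Claim \ref{afirmation1}, and it is what legitimizes both the application of Ka\v{c}--Abramov and the membership $\mu\in\mathcal{M}_{L}(\sigma,W)$. Everything else is a translation along the conjugacy $\Pi$ and an unwinding of the definitions of $P_{L}(\varphi)$ and $\mathcal{L}$.
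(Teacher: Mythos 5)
Your proof is correct, but it takes a genuinely different route from the paper's. The paper never leaves the level of partition functions: it observes that each cylinder of length $n$ in $S^{\mathbb{Z}}$ contains exactly one point with $T^{n}(w)=w$, dominates the Gurevich sums over such periodic points by $\bigl(\sum_{i>1}\sup_{\Sigma_{i}}e^{\varphi_{\rho}}\bigr)^{n}$, and concludes $P_{G}(\Psi)\le\log\sum_{i>1}\sup_{\Sigma_{i}}e^{\varphi_{\rho}}<\infty$ directly from Claim \ref{afirmation1}; condition $(P_{1})$ enters only to make $P_{G}(\Psi)$ well defined. You instead invoke the variational principle of Theorem \ref{Pri. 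Var}(1) (legitimately: that item does not presuppose finiteness, so there is no circularity), transport an arbitrary $\eta\in\mathcal{M}_{\sigma}(\Psi)$ to $\nu=\Pi_{*}\eta$, use the pointwise bound $\varphi_{\rho}<-\epsilon_{0}\rho$ from Claim \ref{afirmation1} to force $\int\rho\,d\nu<\infty$, and then combine $\mathcal{L}$, the Ka\v{c}--Abramov formulas and the definition of $P_{L}(\varphi)$ to bound every free energy by $0$. What your route buys is a sharper conclusion, $P_{G}(\Psi)\le 0$, together with an early proof of facts the paper only establishes later inside the proof of Theorem \ref{Uni.Esq.} (finiteness of $\int\rho\,d\nu$ and membership of the projected measure in $\mathcal{M}_{L}(\sigma,W)$); what the paper's route buys is economy, since it uses nothing beyond Claim \ref{afirmation1} and the definition of the Gurevich pressure, and it also sidesteps the cylinder-counting issue you raise in your opening remark by taking suprema over the whole level sets $\Sigma_{i}$ rather than summing over the $r_{i}$ cylinders (the growth of $r_{i}$, controlled by choosing $\alpha$ small, only enters later in the proof of Theorem \ref{Uni.Esq.}). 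One small slip: ``$\Psi<0$ is bounded'' should read ``bounded above'' --- $\Psi$ is unbounded below --- but the Dirac mass at a fixed point of the full shift still shows $\mathcal{M}_{\sigma}(\Psi)\neq\emptyset$, so nothing is affected.
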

	\begin{proof}[Proof of Claim]
		By the condition $(P_{1})$ the Gurevich Pressure $P_{G}(\Psi)$ is well defined. Let $n$ be positive integer and a cylinder  $[D^{l_{1}}_{i_{1}}...D^{l_{n}}_{i_{n}}]$ in $S^{\mathbb{Z}}$, there exists a unique  sequence $w\in\Pi([D^{l_{1}}_{i_{1}}...D^{l_{n}}_{i_{n}}])$ such that $T^{n}(w)=w$.  Thus, by fixing $a>1$ and $1\leq j\leq r_{a}$ we get 
		\begin{eqnarray}\label{II}
			\left(\sum_{i>1}\sup_{w\in\Sigma_{i}}e^{\varphi_{\rho}(w)} \right)^{n} &=& \sum_{i_{1},...i_{n}}C_{i_{1}...i_{n}}
			\sup_{w\in \Sigma_{i_{1}}}e^{\varphi_{\rho}(w)}...\sup_{w\in \Sigma_{i_{n}}}e^{\varphi_{\rho}(w)} \nonumber \\
			&\geq&\sum_{i_{2},...i_{n}}C_{a,i_{2}...i_{n}}
			\sup_{w\in \Sigma_{a}}e^{\varphi_{\rho}(w)}\sup_{w\in \Sigma_{i_{2}}}e^{\varphi_{\rho}(w)}...\sup_{w\in \Sigma_{i_{n}}}e^{\varphi_{\rho}(w)}  \nonumber \\
			&\geq& \sum_{\stackrel{T^{n}(w)=w}{w\in D ^{a}_{j}}} e^{\sum^{n-1}_{k=0}\varphi_{\rho}(T^kw)}. \nonumber
		\end{eqnarray}
		
		Then by (\ref{I}) and \eqref{II}, we have
		\begin{eqnarray}
			P_{G}(\Psi) &=& \lim_{n\rightarrow \infty}\frac{1}{n}\log
			\sum_{\stackrel{\sigma^{n}(\hat{w})=\hat{w}}{\hat{w}\in [D^{a}_{j}]}}e^{\sum^{n-1}_{j=0}\Psi(\sigma^{j}(\hat{w}))} \nonumber\\
			&=&\lim_{n\rightarrow \infty}\frac{1}{n}\log
			\sum_{\stackrel{T^{n}(w)=w}{w\in D^{a}_{j}}}e^{\sum^{n-1}_{k=0}\varphi_{\rho}(T^{k}(w))} \nonumber\\
			&\leq & \lim_{n\rightarrow \infty}\frac{1}{n}\log
			\left( \sum_{i>1}\sup_{w\in \Sigma_{i}}e^{\varphi_{\rho}(w)} \right)^{n} < +\infty.\nonumber
		\end{eqnarray}
	\end{proof}
	
	Note that by using the same arguments we prove that the Gurevich pressure of the potential  $\Psi_{\delta}$ on $S^{\mathbb{Z}}$, given by $\Psi_{\delta}(\hat{w})= \varphi_{\rho}(w) + \delta \rho(w)$ is finite for some $\delta>0$ small enough. A potential $\Psi$ that satisfies this is called \textit{positive recurrent potential}.
	
	\begin{proof}[Proof of Theorem \ref{Uni.Esq.}]
		By Theorem \ref{Pri. Var}, we have the variational principle
		\[
		P_{G}(\Psi)=\sup_{\eta\in\mathcal{M}_{\sigma}(\Psi)}\{h_{\eta}(\sigma)+\int\Psi \, d\eta\}.
		\]
		Moreover, there exists a unique Gibbs measure $\eta_{\Psi}$ for $\Psi$, so the measure $\nu=\Pi_{\ast}\eta_{\Psi}$ also has this property with respect to the potential $\varphi_{\rho}$, that is, there exists a constants
		$K>0$ such that for $n>1$ and $\Sigma_{n}= \cup_{i=1}^{r_{n}}D_{i}^{n}$ we have
		\begin{equation}\label{Pro. Gib}
			K^{-1}\leq \frac{\nu(D_{i}^{n})}{e^{\varphi_{\rho}(w)-P}}\leq K,
		\end{equation}
		for $w\in D_{i}^{n}$ and $P=P_{G}(\Psi)$. \\
		Now, we define 
		\[
		c(\alpha) = \limsup \frac{1}{n} \log \left[ \sum_{k\leq \alpha n} {n \choose k }  \right] .
		\]
		By using standing theorem for $n!$, we can easily cheek that $\lim_{\alpha\rightarrow 0}c(\alpha)= 0$. Choose $\alpha>0$, such that $c(\alpha)< P_{L}(\varphi) - \sup(\varphi)$. 
		We observe that
		\begin{eqnarray}
			\limsup \frac{1}{n} \log r_{n}  & \leq & \limsup \frac{1}{n} \log \sharp\{ w=(w_{0}w_{1}...) \, ; \ d_{n-1}(w)\leq \alpha \}  \\ \nonumber
			& < & c(\alpha)< P_{L}(\varphi) - \sup(\varphi).  \\ \nonumber
		\end{eqnarray}
		Thus, for $n$ large enough we have that $r_{n} < e^{\epsilon n }$.
		Hence by summing \eqref{Pro. Gib} with respect to $n>1$ and using \eqref{I}, we have
		\begin{eqnarray}\label{II}
			\int_{\Sigma_{\alpha}}\rho \, d\nu &=& \sum_{n>1} n \nu (\Sigma_{n}) \nonumber\\
			&=& \sum_{n>1} n \sum_{i=1}^{r_{n}} \nu( D_{i}^{n}) \nonumber\\
			&\leq& \frac{K}{e^{P}} \sum_{n>1} n r_{n} \sup_{w\in D_{i}^{n}}\{e^{\varphi_{\rho}(w)+ \epsilon n}\}  < +\infty .
		\end{eqnarray}
		By using \eqref{II} and Abramov-Kac formulas, we have
		\begin{eqnarray}
			h_{\eta_{\Psi}}(\sigma)  & = & h_{\nu}(T) \nonumber\\
			& = & \int \rho \, d\nu \cdot h_{\mathcal{L}(\nu)}(\sigma) < + \infty. \nonumber
		\end{eqnarray}
		Since $ P_{L}(\varphi) > -\infty$ and $\int \varphi \, d\mathcal{L}(\nu) > -\infty$, we have
		\begin{eqnarray}
			\int\Psi \, d\eta_{\Psi} & = & \int \varphi_{\rho} \, d\nu\nonumber \\
			& = & \left(\int \rho \, d\nu \right) \cdot \int \varphi - P_{L}(\varphi) \, d\mathcal{L}(\nu) > - \infty . \nonumber
		\end{eqnarray}
		Hence $\eta_{\Psi}\in\mathcal{M}_{\sigma}(\Psi)$, and by using Theorem \ref{Pri. Var} once again, we obtain that the measure $\eta_{\Psi}$ is the unique equilibrium state of $\Psi$. However, we can define the measure $\mathcal{L}(\nu)$ that is contained in $\mathcal{M}_{L}(\sigma,W)$. By using that $\Psi$ is a positive recurrent potential and  [\cite{PS}, Theorem 4.4] we obtain that the
		measure $\mathcal{L}(\nu)$ is the unique relative equilibrium state for $P_{L}(\varphi)$.
		
		In order to complete the proof of Theorem~\ref{Uni.Esq.}, we just need to check the properties of Proposition~\ref{CLT}. We make use of the following lemma.
		\begin{lemma}\label{III}
			The measure $\nu$ has exponential tail, that is, there exist $C>0$ and $0 < \vartheta < 1$ such that for all $n>0$, 
			\[
			\nu(\{w\in \Sigma_{\alpha}; \rho(w)\geq m\}) \leq C \vartheta^{n}.
			\]
		\end{lemma}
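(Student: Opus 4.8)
The plan is to deduce the exponential-tail estimate for $\nu$ directly from the Gibbs property \eqref{Pro. Gib}, the summability estimate \eqref{I}, and the subexponential growth bound $r_{n}<e^{\epsilon n}$ that was already established in the proof of Theorem~\ref{Uni.Esq.}. The point is that $\{w\in\Sigma_{\alpha};\rho(w)\geq m\}$ is exactly the disjoint union $\bigcup_{n\geq m}\Sigma_{n}=\bigcup_{n\geq m}\bigcup_{i=1}^{r_{n}}D_{i}^{n}$, so one just needs to sum the Gibbs upper bound for $\nu(D_i^n)$ over the tail $n\geq m$.

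\textbf{Step 1.} Fix $\epsilon>0$ as in Claim~\ref{afirmation1} (so that \eqref{I} holds) and recall from the proof of Theorem~\ref{Uni.Esq.} that there is $n_{0}$ with $r_{n}<e^{\epsilon n}$ for all $n\geq n_{0}$; enlarging $n_{0}$ if needed, we may assume the tail we estimate starts past $n_0$. From \eqref{4} in Claim~\ref{afirmation1} we have $\sup_{w\in\Sigma_{n}}e^{\varphi_{\rho}(w)+n\epsilon}\leq e^{-n\epsilon_{0}}$ for a fixed $\epsilon_{0}>0$ (here $\varphi_\rho$ is the normalized potential $\overline{\varphi-P_L(\varphi)}$, so the pressure term is already absorbed).

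\textbf{Step 2.} Using the Gibbs inequality \eqref{Pro. Gib}, for each $n$ and each $1\leq i\leq r_n$ and $w\in D_i^n$ we have $\nu(D_i^n)\leq K e^{\varphi_\rho(w)-P}$ with $P=P_G(\Psi)$; summing over the $r_n$ cylinders composing $\Sigma_n$ gives
\[
\nu(\Sigma_{n})\leq K e^{-P}\, r_{n}\,\sup_{w\in\Sigma_{n}}e^{\varphi_{\rho}(w)}\leq K e^{-P}\, e^{\epsilon n}\, e^{-n\epsilon_{0}}e^{-n\epsilon}=K e^{-P}\, e^{-n\epsilon_{0}}.
\]
Then for $m\geq n_0$,
\[
\nu(\{w\in\Sigma_{\alpha};\rho(w)\geq m\})=\sum_{n\geq m}\nu(\Sigma_{n})\leq K e^{-P}\sum_{n\geq m}e^{-n\epsilon_{0}}=K e^{-P}\frac{e^{-m\epsilon_{0}}}{1-e^{-\epsilon_{0}}},
\]
which is of the claimed form $C\vartheta^{m}$ with $\vartheta=e^{-\epsilon_{0}}\in(0,1)$ and $C=K e^{-P}/(1-e^{-\epsilon_{0}})$; adjusting the constant $C$ handles the finitely many $m<n_0$.

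\textbf{The main obstacle} is bookkeeping rather than conceptual: one must be careful that the $\epsilon$ appearing in $r_n<e^{\epsilon n}$, in \eqref{I}, and in the normalization of $\varphi_\rho$ are consistently chosen so that the factors $e^{\epsilon n}$ from the cylinder count and $e^{-n\epsilon}$ from \eqref{4} cancel, leaving a genuine geometric decay $e^{-n\epsilon_0}$. This is exactly the choice made in Claim~\ref{afirmation1} and in the passage establishing $r_n<e^{\epsilon n}$, so no new estimate is needed; the lemma is really a repackaging of inequalities \eqref{I}, \eqref{Pro. Gib}, and the count $r_n<e^{\epsilon n}$ into a statement about $\nu(\{\rho\geq m\})$. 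Once the exponential tail is in hand, it feeds into Proposition~\ref{CLT} together with the already-verified local Hölder continuity and finiteness of $P_G(\Psi)$ to complete the proof of Theorem~\ref{Uni.Esq.}.
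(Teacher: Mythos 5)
Your proposal is correct and follows essentially the same route as the paper: the authors' proof is precisely to write $\nu(\{\rho\geq m\})=\sum_{n\geq m}\sum_{i=1}^{r_n}\nu(D_i^n)$ and then repeat the estimate used for \eqref{II}, i.e.\ the Gibbs bound \eqref{Pro. Gib} combined with $r_n<e^{\epsilon n}$ and the decay from Claim~\ref{afirmation1}. You have simply written out explicitly the cancellation of the $e^{\epsilon n}$ and $e^{-\epsilon n}$ factors and the geometric summation over $n\geq m$, which is exactly what the paper's ``in a similar way to \eqref{II}'' refers to.
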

		\begin{proof}
			Note that $\nu(\{w\in \Sigma_{\alpha}; \rho(w)\geq m\})  =  \sum_{n\geq m}^{\infty}\sum_{i=1}^{r_{n}} \nu( D_{i}^{n})$, then we obtain the result in a similar way to \eqref{II}.
		\end{proof}
		
		It follows that by Lemma \ref{III} and the results [\cite{Y}, Theorem 2, Theorem 3] that the measure $\mathcal{L}(\nu)$ has exponential decay of correlations and satisfies the
		(CLT) with respect to class of functions whose induced functions on $\Sigma_{\alpha}$ are
		bounded H\"{o}lder  continuous.
	\end{proof}

	\begin{proof}[ Proof of Theorem~\ref{maintheorem}]
		
		By using Theorem \ref{t.LOR}, we have that every continuous potential $\phi$ has an equilibrium state. Assume that $\mu_1$ and $\mu_2$ are ergodic equilibrium states for $F$ and let us prove that  $\mu_1=\mu_2$.
		
		By hypothesis $(C_{2})$, there exists a natural $n$ such that $P(\phi)>\sup \frac{\phi_n}{n}$, so
		\[
		\phi(Q)\leq\sup \frac{\phi_n}{n} < P(\phi).
		\]
		Then, the Dirac measure $\delta_Q$, can not be an equilibrium state for $\phi$. Analogously, we can obtain the same for $\delta_P$. Therefore, by Lemma \ref{Red. Med.}, we can consider a constant  $\alpha>0$ small enough such that $\mu_i(R_1)>\alpha$ for $i=1,2$.
		
		Now, we consider the  inducing scheme $(\rho,\Sigma_{\alpha})$ such that  $\Sigma_\alpha \subset \Sigma_{11}$ is the subset composed by the sequences with frequency of symbols 1's  at least $\alpha$, as defined in (\ref{Sigmaalpha}), and the associated
		tower $W$ as in (\ref{tower}).
		
		For each $i=1,2$, we denote by $\nu_i$ the push-forward of $\mu_i$ with respect to semi-conjugacy $h$, i.e., the measure defined on $A\subset \Sigma_{11}$ by
		$$
		\nu_i(A)=\mu_i(h^{-1}(A)).
		$$
		The measure $\nu_i$ is ergodic and satisfies  $\nu_i([1])=\mu_i(R_1)>\alpha>0$, so since $W$ is an $\sigma$-invariant set, by ergodicity of $\nu_i$ we have $\nu_{i}(W)=1$. Hence, applying the Lemma \ref{Med. Lev.} for $A=[1]$, we conclude that $\nu_{i}$ is a liftable measure, moreover it is
		a relative equilibrium state for the potential $\varphi=h_{\ast}\phi$. In the other hand, the potential $\phi$ satisfies the conditions $(C_{1})$ and $(C_{2})$,
		so $\varphi$ satisfies the properties $(P_{2})$ and $(P_{1})$. Then, by Theorem \ref{Uni.Esq.}, there exists a unique relative equilibrium state, so $\nu_{1}=\nu_{2}$. Therefore, we conclude that $\mu_{1}=\mu_{2}$. The uniqueness is proven.
	\end{proof}
	
	\section{Proof of Theorem \ref{transfer}}\label{equivalence}
	
	In this section, we prove that uniqueness of equilibrium state for the restriction of the potential to $\Omega$ and the map $G : \Omega \to \Omega$ is equivalent to uniqueness of equilibrium state for the horseshoe map $F : \Lambda \to \Lambda$. The technique is based on the strategy given in the works \cite{RS} and \cite{RS2}. First, we prove that potentials $\phi$ with uniqueness for the restriction $\phi_{|\Omega}$ and such that they do not depend on the $z$ coordinate have uniqueness. After that, we prove that potentials without this condition have cohomologous satisfying this and the uniqueness is obtained in the general case.
	
	In order to make the paper self contained, we will prove our result following the proofs in the works \cite{RS} and \cite{RS2} along the same lines. All the strategy works here because the unique property used with respect to the restriction to $\Omega$ is the uniqueness obtained from the small variation of the potential and the non-uniform expansion of the map $G$. 
	
	As it is said in \cite{RS}, some of the following ideas are based in the constructions in \cite{CN}. We define the projection of the parallelepipeds $R_{0}$ and $R_{1}$ onto the planes $P_{0}$ and $P_{1}$, $\pi : R_{0} \cup R_{1} \to P_{0} \cup P_{1}$ by
	\[
	\pi(x,y,z) = \left \{ 
	\begin{array}{cc}
		(x,y,0), & \,\, if \,\,(x,y,z), \in R_{0}, \\
		(x,y,\frac{5}{6}) & \,\, if \,\, (x,y,z) \in R_{1}. 
	\end{array}
	\right.
	\]
	It is straightforward to check that $\pi$ is continuous, surjective and
	\[
	\pi \circ F^{-1} = G \circ \pi.
	\]
	
	\begin{lemma}[\cite{RS}, Lemma 5.1]\label{zero entropy}
		For each $X \in \Omega$ we have $h(F^{-1}, \pi^{-1}(X)) = 0$.
	\end{lemma}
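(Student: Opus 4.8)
The plan is to use that, fibre by fibre, $F^{-1}$ is a uniform contraction, so that no exponential growth of orbit-separated points can occur along a single fibre. Recall that on each $R_i$ the map $F$ has the skew-product form $F(x,y,z)=(f_i(x),g_i(y),h_i(z))$, where $h_0(z)=\beta_0 z$ with $\beta_0>6$ and $h_1(z)=\beta_1(z-5/6)$ with $3<\beta_1<4$; since $\lambda_0<1/3$, the images $F(R_0)$ and $F(R_1)$ have disjoint $x$-ranges $[0,\lambda_0]$ and $[3/4-\lambda_0,3/4]$, so $F$ is injective on $R_0\cup R_1$, $F^{-1}$ is a well-defined homeomorphism of $\Lambda$, and each of its two inverse branches is again of skew-product form. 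A fibre $\pi^{-1}(X)$ lies in a vertical segment $\{x\}\times\{y\}\times J$ with $J\subset[0,1/6]$ or $J\subset[5/6,1]$, hence of length at most $1/6$; all points of $\pi^{-1}(X)\cap\Lambda$ share the same $x$-coordinate and therefore lie in the same one of $F(R_0),F(R_1)$, so a \emph{single} inverse branch acts on the whole fibre, affinely in $z$ with slope $1/\beta_0<1/6$ or $1/\beta_1<1/3$ and leaving $x,y$ fixed. Since $\pi\circ F^{-1}=G\circ\pi$, this branch maps $\pi^{-1}(X)$ into $\pi^{-1}(G(X))$, again a vertical segment of the same type.

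Iterating, $F^{-n}(\pi^{-1}(X))$ is contained in a vertical segment of length at most $\beta_1^{-n}/6\le 3^{-n}/6$, with $x,y$ constant along it, so $\diam F^{-n}(\pi^{-1}(X))\le 3^{-n}/6\to0$. Write $K:=\pi^{-1}(X)$, which is compact, and recall that $h(F^{-1},K)$ is computed from $(n,\epsilon)$-separated (equivalently $(n,\epsilon)$-spanning) subsets of $K$ for the dynamical metrics $d_n(p,q)=\max_{0\le j<n}d(F^{-j}p,F^{-j}q)$. Given $\epsilon>0$, pick $N=N(\epsilon)$ with $3^{-N}/6<\epsilon$. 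For $j\ge N$ and all $p,q\in K$ we have $d(F^{-j}p,F^{-j}q)\le\diam F^{-j}(K)<\epsilon$, so for every $n\ge N$ one has $d_n(p,q)>\epsilon$ if and only if $d_N(p,q)>\epsilon$; hence any $(n,\epsilon)$-separated subset of $K$ is already $(N,\epsilon)$-separated, and its cardinality is bounded by a finite $M(\epsilon)$ independent of $n$. Therefore
\[
h(F^{-1},K)=\lim_{\epsilon\to0}\ \limsup_{n\to\infty}\ \frac1n\log M(\epsilon)=0 .
\]

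The only delicate points are bookkeeping: checking single-valuedness of $F^{-1}$ (which rests on $\lambda_0<1/3$ separating $F(R_0)$ from $F(R_1)$), reading off the exact $z$-contraction rate of each branch from the formulas defining $F$, and making sure that $h(F^{-1},\pi^{-1}(X))$ is taken precisely in the Bowen fibre-entropy sense entering the Ledrappier--Walters relative variational principle, so that the lemma can be used downstream. None of these is a genuine obstacle; the content of the statement is simply that the fibres of $\pi$ are uniformly exponentially contracted by $F^{-1}$, which makes any orbit-based count along a fibre stabilise after finitely many steps.
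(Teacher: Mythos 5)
Your proof is correct: since $F(R_0)$ and $F(R_1)$ have disjoint $x$-ranges, a single inverse branch acts on each vertical fiber $\pi^{-1}(X)$, contracting its $z$-length by a factor at most $1/\beta_1<1/3$ per iterate while the semiconjugacy $\pi\circ F^{-1}=G\circ\pi$ keeps the image inside a fiber of $\pi$, so for every $\epsilon>0$ the $(n,\epsilon)$-separated subsets of $\pi^{-1}(X)$ stabilize after finitely many steps and the Bowen fiber entropy vanishes (the phrase ``leaving $x,y$ fixed'' is a harmless slip, since what you actually use is that two points of the fiber keep \emph{equal} $x,y$-coordinates, which you justify via the semiconjugacy). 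The paper itself does not prove this statement but quotes it from \cite{RS}, Lemma 5.1, and your argument is essentially the same uniform-fiber-contraction proof that underlies the cited result.
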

	
	Since the projection $\pi$ is actually a semiconjugacy between the inverse horseshoe $F^{-1}$ and the map $G$, we have (see \cite{B})
	\[
	h_{\text{top}}(F^{-1}) \leq h_{\text{top}}(G) + \sup\{h(F^{-1}, \pi^{-1}(X)) \mid X \in \Omega\}. 
	\]
	By Lemma \ref{zero entropy} we get
	\[
	h_{\text{top}}(F^{-1}) \leq h_{\text{top}}(G). 
	\]
	On the other hand, because $\pi$ is a semiconjugacy, we have the other inequality immediately (see \cite{Bowen}), which gives us
	\[
	h_{\text{top}}(F^{-1}) = h_{\text{top}}(G). 
	\]
	Thus, we have proved that the topological entropies of these two maps are equal.
	
	\begin{proposition}[\cite{RS}, Proposition 5.2]\label{zero entropy}
		For $\varphi = \phi_{|\Omega} \circ \pi$ we have $P_{\text{top}}(F^{-1},\varphi) = P_{\text{top}}(G,\phi_{|\Omega})$.
	\end{proposition}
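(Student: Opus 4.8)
The plan is to obtain the equality of pressures from the variational principle for the topological pressure combined with the Ledrappier--Walters relative variational principle for entropy, exploiting that $\pi$ is a continuous surjective semiconjugacy with $\pi\circ F^{-1}=G\circ\pi$ whose fibers carry zero topological entropy by Lemma~\ref{zero entropy}. Throughout I regard $F^{-1}$ as a homeomorphism of the compact set $\pi^{-1}(\Omega)\subset R_0\cup R_1$ and $G$ as a continuous self-map of the compact set $\Omega$, so that $\pi$ restricts to a genuine factor map between compact metric systems and $\varphi=\phi_{|\Omega}\circ\pi$ is a continuous potential on $\pi^{-1}(\Omega)$.

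The heart of the argument is a short chain of equalities. For the factor map $\pi$, the Ledrappier--Walters formula gives, for every $G$-invariant Borel probability $\nu$ on $\Omega$,
\[
\sup_{\mu:\ \pi_{*}\mu=\nu} h_{\mu}(F^{-1}) \;=\; h_{\nu}(G) + \int_{\Omega} h\big(F^{-1},\pi^{-1}(X)\big)\,d\nu(X) \;=\; h_{\nu}(G),
\]
the last equality being precisely Lemma~\ref{zero entropy}. Since $\pi_{*}$ is onto $\mathcal{M}_{G}(\Omega)$ and $\int\varphi\,d\mu=\int\phi_{|\Omega}\circ\pi\,d\mu=\int\phi_{|\Omega}\,d(\pi_{*}\mu)$ for any $F^{-1}$-invariant $\mu$, partitioning the invariant measures of $F^{-1}$ according to their image under $\pi_{*}$ yields
\begin{align*}
P_{\text{top}}(F^{-1},\varphi) &= \sup_{\mu}\Big(h_{\mu}(F^{-1})+\int\varphi\,d\mu\Big)
= \sup_{\nu}\ \sup_{\mu:\ \pi_{*}\mu=\nu}\Big(h_{\mu}(F^{-1})+\int\phi_{|\Omega}\,d\nu\Big)\\
&= \sup_{\nu}\Big(\int\phi_{|\Omega}\,d\nu + \sup_{\mu:\ \pi_{*}\mu=\nu} h_{\mu}(F^{-1})\Big)
= \sup_{\nu}\Big(h_{\nu}(G)+\int\phi_{|\Omega}\,d\nu\Big)
= P_{\text{top}}(G,\phi_{|\Omega}),
\end{align*}
where the first and last equalities are the variational principle for the pressure on $\pi^{-1}(\Omega)$ and on $\Omega$ respectively, and the second-to-last is the displayed consequence of Ledrappier--Walters and Lemma~\ref{zero entropy}. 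As a consistency check, one inequality is elementary: given $\nu$, lift it to $\mu$ with $\pi_{*}\mu=\nu$; then $h_{\mu}(F^{-1})\ge h_{\nu}(G)$ holds for any factor map, so $h_{\nu}(G)+\int\phi_{|\Omega}\,d\nu\le h_{\mu}(F^{-1})+\int\varphi\,d\mu\le P_{\text{top}}(F^{-1},\varphi)$, and taking the supremum over $\nu$ gives $P_{\text{top}}(G,\phi_{|\Omega})\le P_{\text{top}}(F^{-1},\varphi)$.

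The main obstacle is essentially bookkeeping rather than substance: one must verify that $\pi$ restricted to the relevant maximal invariant sets really is a factor map between compact systems, that the pressures appearing in the statement are those computed on these compact sets, and that $\varphi$ is continuous (which is immediate from continuity of $\phi$ and of $\pi$) so that the variational principle and the relative variational principle apply verbatim. The only genuinely dynamical input, the vanishing of the fiber entropies, has already been isolated in Lemma~\ref{zero entropy}. If one prefers to avoid citing Ledrappier--Walters, the same conclusion follows by a direct count with $(n,\varepsilon)$-separated and $(n,\varepsilon)$-spanning sets along the lines of \cite{RS} and \cite{RS2}: Bowen's inequality for the factor map gives $P_{\text{top}}(F^{-1},\varphi)\le P_{\text{top}}(G,\phi_{|\Omega})$, while $(n,\varepsilon)$-spanning sets for $G$ lift, through $\pi$, to $(n,\varepsilon)$-spanning sets for $F^{-1}$ with a subexponential multiplicity controlled by the zero-entropy fibers, yielding the reverse inequality.
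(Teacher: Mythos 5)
Your argument is correct and follows essentially the same route as the paper: the key step in both is the Ledrappier--Walters relative variational principle combined with the zero fiber-entropy lemma to bound $P_{\text{top}}(F^{-1},\varphi)$ by $P_{\text{top}}(G,\phi_{|\Omega})$ through the variational principle. The only cosmetic difference is that you obtain the reverse inequality by lifting $G$-invariant measures (surjectivity of $\pi_{*}$ and $h_{\mu}(F^{-1})\geq h_{\pi_{*}\mu}(G)$), whereas the paper simply cites Bowen's general inequality $P_{\text{top}}(G,\phi_{|\Omega})\leq P_{\text{top}}(F^{-1},\varphi)$ for semiconjugacies.
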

	\begin{proof}
		Given $\mu$ a $F^{-1}$-invariant probability measure, taking $\nu = \mu \circ \pi^{-1}$ a result by Ledrappier and Walters (see \cite{LW}) guarantees that
		\[
		h_{\mu}(F^{-1}) + \int \phi_{|\Omega} \circ \pi d\mu \leq  h_{\nu}(G) + \int \phi_{|\Omega} d\nu + \int_{\Omega} h(F^{-1}, \pi^{-1}(X)) d\mu(X) = h_{\nu}(G) + \int \phi_{|\Omega} d\nu \leq P_{\text{top}}(G),
		\]
		because by Lemma \ref{zero entropy} we have $h(F^{-1}, \pi^{-1}(X)) = 0$ for each $X \in \Omega$ and by semiconjugacy $\nu$ is $G$-invariant.
		
		Going back to the inequality, we have for $\varphi = \phi_{|\Omega} \circ \pi$ and any $F^{-1}$-invariant probability $\mu$
		\[
		h_{\mu}(F^{-1}) + \int \varphi d\mu \leq P_{\text{top}}(G) \implies P_{\text{top}}(F^{-1},\varphi) \leq P_{\text{top}}(G,\phi_{|\Omega}).
		\]
		In the other hand, $\pi$ is a semiconjugacy, which implies 
		(see \cite{Bowen})
		\[
		P_{\text{top}}(G,\phi_{|\Omega}) \leq P_{\text{top}}(F^{-1},\varphi) \implies P_{\text{top}}(G,\phi_{|\Omega}) = P_{\text{top}}(F^{-1},\varphi).
		\]
	\end{proof}
	
	\begin{theorem}\label{lift}
		Given a $G$-invariant Borel probability $\mu$, there exists an $F^{-1}$-invariant Borel probability $\overline{\mu}$ such that $h_{\overline{\mu}}(F^{-1}) \geq h_{\mu}(G)$.
	\end{theorem}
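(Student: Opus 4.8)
The plan is to lift a given $G$-invariant measure $\mu$ on $\Omega$ to an $F^{-1}$-invariant measure $\overline\mu$ on $\Lambda$ along the semiconjugacy $\pi\circ F^{-1}=G\circ\pi$, in such a way that no entropy is lost. By the ergodic decomposition it suffices to treat the case where $\mu$ is ergodic, since one can lift each ergodic component and integrate. First I would observe that the fibers $\pi^{-1}(X)$ are contained in vertical center-stable segments on which $F^{-1}$ acts essentially as an affine contraction in the $z$-direction (the $z$-factor of $F$ is expanded by $\beta_0>6$ or $\beta_1\in(3,4)$, so $F^{-1}$ contracts it), which is precisely why Lemma~\ref{zero entropy} gives $h(F^{-1},\pi^{-1}(X))=0$. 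The natural candidate for $\overline\mu$ is obtained from an $F^{-1}$-invariant measure projecting to $\mu$; existence of such a measure follows from a standard compactness argument, since $\pi$ is continuous and surjective and $\mathcal M_{F^{-1}}(\Lambda)$ is compact: take any $\mu$-generic point, pull back along $\pi$, form empirical averages under $F^{-1}$, and extract a weak-$\ast$ limit, which is $F^{-1}$-invariant and projects to $\mu$ by continuity of $\pi$.

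The heart of the matter is the entropy inequality $h_{\overline\mu}(F^{-1})\ge h_\mu(G)$. Here I would invoke the Ledrappier--Walters variational formula for the fiber entropy of a skew-type factor map (the same tool already used in Proposition~\ref{zero entropy}): for any $F^{-1}$-invariant $\overline\mu$ with $\pi_\ast\overline\mu=\mu$,
\[
h_{\overline\mu}(F^{-1}) \;=\; h_\mu(G) + \int_\Omega h(F^{-1},\pi^{-1}(X))\,d\mu(X).
\]
Actually the Ledrappier--Walters identity gives $\sup\{h_{\overline\mu}(F^{-1}) : \pi_\ast\overline\mu=\mu\} = h_\mu(G) + \int h(F^{-1},\pi^{-1}(X))\,d\mu(X)$, and by Lemma~\ref{zero entropy} the integral term vanishes, so the supremum over lifts equals $h_\mu(G)$. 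It then remains to check that the supremum is attained. This is where the main obstacle lies: in general the supremum in the Ledrappier--Walters formula need not be realized by a single measure. However, the fiber structure here is very rigid — each $\pi^{-1}(X)$ is a single contracting segment, and one can build the lift explicitly rather than abstractly. Concretely, I would use the fact (from Theorem~\ref{t.DHRS} and the discussion of $\hat\Lambda$) that off a measure-zero set the $h$-fibers, hence the $\pi$-fibers, are single points on which $F^{-1}$ is genuinely injective; for such $X$ there is a unique preimage point, giving a measurable section $s:\Omega\to\Lambda$ with $\pi\circ s=\mathrm{id}$ away from a null set, and then $\overline\mu := s_\ast\mu$ (or a suitable $F^{-1}$-invariant modification of it) is a lift with $h_{\overline\mu}(F^{-1})\ge h_\mu(G)$ because a measurable isomorphism onto its image preserves entropy.

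So the steps, in order: (1) reduce to $\mu$ ergodic via ergodic decomposition; (2) produce some $F^{-1}$-invariant $\overline\mu$ with $\pi_\ast\overline\mu=\mu$ by a weak-$\ast$ compactness argument; (3) apply the Ledrappier--Walters formula together with Lemma~\ref{zero entropy} to get $h_{\overline\mu}(F^{-1})\le h_\mu(G)+0$ for every lift, and — using either the explicit measurable-section construction on the full-measure set where $\pi$ is injective, or the attainment clause in Ledrappier--Walters for this rigid fibered setting — produce a specific lift achieving $h_{\overline\mu}(F^{-1})\ge h_\mu(G)$; (4) conclude. The delicate point I expect to spend the most care on is Step~3: verifying that the chosen lift really does not lose entropy, i.e. that the measurable section is defined $\mu$-a.e. and that $F^{-1}$ restricted to its image is measure-theoretically conjugate to $(G,\mu)$, so that $h_{\overline\mu}(F^{-1})=h_\mu(G)$ and in particular $\ge h_\mu(G)$.
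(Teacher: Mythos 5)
Your plan hinges on the wrong step, and the mechanism you propose for it does not work. The inequality $h_{\overline{\mu}}(F^{-1})\ge h_{\mu}(G)$ is the \emph{easy} direction once you have any $F^{-1}$-invariant lift: if $\pi_{*}\overline{\mu}=\mu$, then $(G,\mu)$ is a measure-theoretic factor of $(F^{-1},\overline{\mu})$, and factors never have larger entropy (for any finite partition $\mathcal{P}$ of $\Omega$ one has $h_{\overline{\mu}}(F^{-1},\pi^{-1}\mathcal{P})=h_{\mu}(G,\mathcal{P})$). So your Step~2 (average a lift of $\mu$ under $F^{-1}_{*}$ and take a weak-$*$ limit, which still projects to $\mu$ by continuity of $\pi$ and $G$-invariance of $\mu$) would already finish the proof. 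Instead, you treat $\ge$ as the hard direction and try to obtain it from (a) an ``attainment clause'' in the Ledrappier--Walters formula, which does not exist in general (Ledrappier--Walters computes a supremum over lifts; attainment needs extra hypotheses such as upper semicontinuity of the entropy map, which you do not verify), and (b) a measurable section built on the claim that the $\pi$-fibers are $\mu$-a.e.\ single points. That claim is false: $\pi$ collapses the $z$-coordinate, which is the \emph{expanding} direction of $F$, so $\pi^{-1}(X)\cap\Lambda$ is typically a Cantor set of positive topological entropy zero but uncountable cardinality; the triviality statements you cite (about $\hat\Lambda$ and the coding $h$) concern the \emph{central} ($y$) segments and the fibers of $h$, not the fibers of $\pi$. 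Consequently the section $s$ you want does not exist a.e., and even where a section exists, $s_{*}\mu$ need not be $F^{-1}$-invariant; the parenthetical ``suitable invariant modification'' is exactly the content that is missing.

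For comparison, the paper does not use Ledrappier--Walters or sections at all in this step: it builds $\overline{\mu}$ by hand on the increasing family of $\sigma$-algebras $F^{-n}(\pi^{-1}(\mathcal{A}_{\Omega}))$, setting $\overline{\mu}(F^{-n}(\pi^{-1}B))=\mu(B)$, proves $\sigma$-additivity via a compact-class criterion, uses the contraction of $F^{-1}$ in the $z$-direction to show these $\sigma$-algebras generate the Borel $\sigma$-algebra of $\Lambda$, and then gets the entropy inequality from Brin--Katok, since $F^{-1}$-dynamical balls are contained in $\pi$-preimages of $G$-dynamical balls, so $\overline{\mu}(B_{F^{-1}}(Y,\delta,n))\le\mu(B_{G}(\pi(Y),\epsilon,n))$. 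Your outline could be repaired into a shorter valid proof (lift by compactness, then the factor-entropy inequality), but as written the central step rests on a false description of the fibers of $\pi$ and an unjustified attainment claim, so it has a genuine gap.
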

	
	We divide the proof of Theorem 
	\ref{lift} into five lemmas.
	
	The first lemma constructs the measure and a $\sigma$-algebra.
	\begin{lemma}\label{measure}
		Denote by $\mathcal{A}_{\Omega}$ the Borel $\sigma$-algebra in $\Omega$. Given a $G$-invariant probability $\mu$, there exists a $\sigma$-algebra $\mathcal{A}$ and a measure $\overline{\mu} : \mathcal{A} \to [0,1]$ depending on $\mu$.
	\end{lemma}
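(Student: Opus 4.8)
\textbf{Proof plan for Lemma \ref{measure}.}

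The plan is to use the semiconjugacy $\pi : \Lambda \to \Omega$ (with $\pi \circ F^{-1} = G \circ \pi$) together with a measurable selection to pull back the $G$-invariant measure $\mu$ to an $F^{-1}$-invariant measure, but since a genuine Borel invariant lift need not exist, we instead build a partial structure: a sub-$\sigma$-algebra $\mathcal{A}$ of the Borel $\sigma$-algebra of $\Lambda$ on which an invariant measure $\overline{\mu}$ is well-defined. First I would fix a Borel cross-section of $\pi$: because the fibers $\pi^{-1}(X)$ are the central segments (or points) and $\pi$ is continuous and surjective between compact metric spaces, there is a Borel map $s : \Omega \to \Lambda$ with $\pi \circ s = \mathrm{id}_{\Omega}$ (Jankov--von Neumann / Kuratowski--Ryll-Nardzewski selection). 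Using $s$ one transports $\mu$ to a Borel probability $s_{*}\mu$ on $\Lambda$ concentrated on the image $s(\Omega)$.

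Next I would define the $\sigma$-algebra $\mathcal{A}$ as the one generated by the family $\{ F^{n}(\pi^{-1}(A)) : A \in \mathcal{A}_{\Omega},\ n \in \mathbb{Z} \}$, i.e. the smallest $F^{-1}$-invariant $\sigma$-algebra containing the pullback $\pi^{-1}(\mathcal{A}_{\Omega})$; equivalently, work on the natural-extension-type space where one records not only the $\pi$-projection but also finitely much information along orbits. On $\pi^{-1}(\mathcal{A}_{\Omega})$ one simply sets $\overline{\mu}(\pi^{-1}(A)) := \mu(A)$, which is consistent and countably additive because $\pi$ is a measurable surjection and $\mu$ is a probability; then one extends $\overline{\mu}$ to all of $\mathcal{A}$ by pushing forward $s_{*}\mu$ along the iterates of $F^{-1}$ and invoking a consistency (Kolmogorov-type) argument so that the values on $F^{n}(\pi^{-1}(A))$ patch together — here the key point is that $G$-invariance of $\mu$, $\mu(G^{-1}A) = \mu(A)$, forces $\overline{\mu}(F(\pi^{-1}(A))) = \overline{\mu}(\pi^{-1}(G^{-1}A)) = \mu(A)$, so the prescription is forced and coherent. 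One checks $\overline{\mu}$ is a genuine measure on $(\Lambda,\mathcal{A})$ with $\overline{\mu}(\Lambda) = 1$ and $\overline{\mu} \circ F^{-1} = \overline{\mu}$ on $\mathcal{A}$.

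The step I expect to be the main obstacle is verifying countable additivity of $\overline{\mu}$ on the generated $\sigma$-algebra $\mathcal{A}$ — that is, showing the finitely-additive prescription on the algebra generated by $\{F^{n}(\pi^{-1}(A))\}$ actually extends to a $\sigma$-additive measure. This is where one must use compactness of the fibers and the tower/natural-extension structure (or a Carathéodory extension argument combined with inner regularity coming from compactness of $\Lambda$) to rule out mass escaping along the fiber direction. The remaining four lemmas in the proof of Theorem \ref{lift} will then take this $(\mathcal{A},\overline{\mu})$ and show that the entropy of $F^{-1}$ computed with respect to $\overline{\mu}$ (using, e.g., a Rokhlin-type formula relative to $\mathcal{A}$ and the zero fiber entropy from Lemma \ref{zero entropy}) is at least $h_{\mu}(G)$, and finally replace $\overline{\mu}$ by an honest $F^{-1}$-invariant Borel probability on all of $\Lambda$ with no loss of entropy.
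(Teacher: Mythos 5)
Your proposal is essentially the paper's own construction: the paper sets $\mathcal{A}_0=\pi^{-1}(\mathcal{A}_{\Omega})$ and $\mathcal{A}_n=F^{-n}(\mathcal{A}_0)$, defines $\overline{\mu}$ on the increasing union by $\overline{\mu}(F^{-n}(A_0))=\mu(\pi(A_0))$ (well defined and $F^{-1}$-invariant precisely because $F(\pi^{-1}(B))=\pi^{-1}(G^{-1}B)$ and $\mu$ is $G$-invariant), and, just as you do, defers countable additivity to a compact-class/approximation argument and the identification with the Borel $\sigma$-algebra to the subsequent lemmas. The only real difference is your Borel cross-section $s$, which is superfluous: every set in these $\sigma$-algebras is a union of entire $\pi$-fibers, so the prescription $\mu(\pi(A_0))$ is already determined without any selection (and the pushforwards $(F^{-n})_{*}s_{*}\mu$ restricted to $\mathcal{A}_n$ reproduce exactly these values).
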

	\begin{proof}
		Let $\mathcal{A}_{\Omega}$ be the Borel $\sigma$-algebra in $\Omega$ and define $\mathcal{A}_{0}:=\pi^{-1}(\mathcal{A}_{\Omega})$. Thus, $\mathcal{A}_{0}$ is a $\sigma$-algebra on the horseshoe, whose elements are given by $A = \pi^{-1}(B)$, where $B$ is a Borelian in $\Omega$. Note that $\mathcal{A}_{0} \subset F^{-1}(\mathcal{A}_{0})$. If for each $n \in \mathbb{N}$ we define $\mathcal{A}_{n}: = F^{-n}(\mathcal{A}_{0})$, we have an increasing sequence of $\sigma$-algebras
		\[
		\mathcal{A}_{0} \subset \mathcal{A}_{1} \subset \mathcal{A}_{2} \subset \dots \subset \mathcal{A}_{n} \subset \dots
		\]
		For each $n$ define a probability measure $\overline{\mu}_{n} : \mathcal{A}_{n} \to [0,1]$ by
		\[
		\overline{\mu}_{n}(F^{-n}(A_{0}) = \mu(\pi(A_{0})) \,\, \text{for all} \,\, A_{0} \in \mathcal{A}_{0}.
		\]
		Note that $\overline{\mu}_{n}$ depends also on $\phi_{|\Omega}$. Using the semiconjugacy it is straightforward to prove that each $\overline{\mu}_{n}$ is invariant under $F^{-1}$. Finally, we consider $\mathcal{A}: = \bigcup_{n=0}^{\infty} \mathcal{A}_{n}$, which is a $\sigma$-algebra on $\Lambda$. Define $\overline{\mu} : \mathcal{A} \to [0,1]$ by
		\[
		\overline{\mu}(A) = \overline{\mu}_{n}(A) \,\,\, \text{if} A \in \mathcal{A}_{0}.
		\]
	\end{proof}
	
	It is easy to see that $\overline{\mu}$ is well-defined. The second lemma gives a criterium to show that the measure is $\sigma$-additive, which is classical in measure theory.
	
	\begin{lemma}\label{criterium}
		Let $\mathcal{A}$ be a $\sigma$-algebra and $\overline{\mu} : \mathcal{A} \to [0,1]$ such that
		\[
		\overline{\mu}(A_{1} \cup \dots \cup A_{n}) = \sum_{i=1}^{n}\overline{\mu}(A_{i}),
		\]
		for every finite family $\{A_i\}_{i=1}^{n}$ of disjoint sets in $\mathcal{A}$. If there exists a family $\mathcal{C} \subset \mathcal{A}$ such that
		\begin{itemize}
			\item $\mathcal{C}$ is compact: if $C_{1} \supset C_{2} \supset \dots \supset C_{n} \supset \dots$ is a sequence of sets in $\mathcal{C}$ then $\cap_{i=1}^{\infty} C_{i} \neq \emptyset$.
			
			\item $\mathcal{C}$ has the approximation property: for all $A \in \mathcal{A}$ we have
			\[
			\overline{\mu}(A) = \sup\{\overline{\mu}(C) \mid C \subset A, C \in \mathcal{C}\};
			\]
			then $\overline{\mu}$ is a probability measure on $\mathcal{A}$.
		\end{itemize}
	\end{lemma}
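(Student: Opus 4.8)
The plan is to show that the finitely additive, nonnegative set function $\overline{\mu}$ is automatically $\sigma$-additive by a diagonal argument played against the compact class $\mathcal{C}$; since $\overline{\mu}(\Lambda)=\mu(\pi(\pi^{-1}(\Omega)))=\mu(\Omega)=1$ by its very definition, this upgrades $\overline{\mu}$ to a genuine probability measure on $\mathcal{A}$.

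\textbf{Reduction to continuity at $\emptyset$.} First I would record the elementary consequences of finite additivity and nonnegativity: $\overline{\mu}$ is monotone (from $\overline{\mu}(B)=\overline{\mu}(A)+\overline{\mu}(B\setminus A)$ when $A\subseteq B$) and finitely subadditive. If $A=\bigcup_{i\ge 1}A_i\in\mathcal{A}$ with the $A_i$ pairwise disjoint, finite additivity and monotonicity give $\sum_{i=1}^{n}\overline{\mu}(A_i)=\overline{\mu}(\bigcup_{i\le n}A_i)\le\overline{\mu}(A)$, hence $\sum_{i\ge1}\overline{\mu}(A_i)\le\overline{\mu}(A)$; setting $B_n:=A\setminus\bigcup_{i\le n}A_i\in\mathcal{A}$ we get $\overline{\mu}(A)=\sum_{i\le n}\overline{\mu}(A_i)+\overline{\mu}(B_n)$. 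Since $B_n\downarrow\emptyset$, countable additivity follows once we prove the continuity statement: $\overline{\mu}(B_n)\to 0$ for every decreasing sequence $(B_n)$ in $\mathcal{A}$ with $\bigcap_n B_n=\emptyset$.

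\textbf{The diagonal construction.} Suppose for contradiction that $\overline{\mu}(B_n)\ge\delta>0$ for all $n$ (the sequence $\overline{\mu}(B_n)$ is nonincreasing, so this is the only alternative to convergence to $0$). By the approximation property, choose for each $n$ a set $C_n\in\mathcal{C}$ with $C_n\subseteq B_n$ and $\overline{\mu}(B_n\setminus C_n)<\delta\,2^{-(n+1)}$, and put $D_n:=C_1\cap\dots\cap C_n$. Then $D_n\subseteq C_n\subseteq B_n$, the $D_n$ are decreasing, and (taking $\mathcal{C}$ closed under finite intersections, which is the case in our applications and can be arranged at the outset) each $D_n$ lies in $\mathcal{C}$. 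Using $B_n\subseteq B_i$ for $i\le n$, hence $B_n\setminus C_i\subseteq B_i\setminus C_i$, finite subadditivity yields
\[
\overline{\mu}(B_n\setminus D_n)=\overline{\mu}\!\Big(\bigcup_{i=1}^{n}(B_n\setminus C_i)\Big)\le\sum_{i=1}^{n}\overline{\mu}(B_i\setminus C_i)<\delta\sum_{i=1}^{n}2^{-(i+1)}<\frac{\delta}{2},
\]
so $\overline{\mu}(D_n)=\overline{\mu}(B_n)-\overline{\mu}(B_n\setminus D_n)>\delta/2>0$; in particular $D_n\neq\emptyset$ for every $n$. By compactness of $\mathcal{C}$ applied to the decreasing sequence $(D_n)$, $\bigcap_n D_n\neq\emptyset$, contradicting $\bigcap_n D_n\subseteq\bigcap_n B_n=\emptyset$. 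Hence $\overline{\mu}(B_n)\to0$, and with the reduction above this shows $\overline{\mu}$ is $\sigma$-additive, i.e.\ a probability measure on $\mathcal{A}$.

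\textbf{Expected obstacle.} Essentially all of this is routine bookkeeping with finitely additive estimates; the single point needing care is the mismatch between the compactness hypothesis, which is phrased for \emph{nested} sequences in $\mathcal{C}$, and the approximating sets $C_n$, which are not nested. The remedy is to pass to the partial intersections $D_n$ and to make sure these still enjoy the compactness property — which is why one works with a class closed under finite intersections (replacing $\mathcal{C}$ by the class of all finite intersections of its members, the approximation property being inherited for free and the compactness property transferring by a routine finite-intersection argument under the usual reading of ``compact class''). Once this is in place, the proof is exactly the diagonal argument above, and the uniform lower bound $\overline{\mu}(D_n)>\delta/2$ — forcing the $D_n$ nonempty — is the crux that produces the contradiction.
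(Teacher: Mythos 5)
Your proof is correct, and it is worth noting that the paper itself offers no argument for this lemma at all: it is stated as ``classical in measure theory'' and immediately applied, so your write-up supplies exactly the standard compact-class (Marczewski--Neveu) argument that the authors leave implicit --- reduction of $\sigma$-additivity to continuity at $\emptyset$, inner approximation of each $B_n$ by $C_n\in\mathcal{C}$ up to $\delta 2^{-(n+1)}$, the passage to the partial intersections $D_n=C_1\cap\dots\cap C_n$ with the uniform bound $\overline{\mu}(D_n)>\delta/2$, and the contradiction with $\bigcap_n D_n\subset\bigcap_n B_n=\emptyset$. The one genuine delicacy is the one you flag yourself: the compactness hypothesis is phrased only for \emph{nested} sequences in $\mathcal{C}$, while your $D_n$ need not lie in $\mathcal{C}$ unless the class is closed under finite intersections; with only the nested-sequence hypothesis this closure is not automatic, so strictly speaking your argument proves the lemma for $\mathcal{C}$ closed under finite intersections (equivalently, for ``compact class'' read in the usual finite-intersection-property sense, from which the closure transfers by the standard Marczewski lemma). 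This is harmless for the paper: in the subsequent lemma the class $\mathcal{C}=\bigcup_{n\geq 0}F^{-n}(\pi^{-1}(K_{\Omega}))$ consists of compact subsets of $\Lambda$, so the enlarged class of finite intersections is again made of compact sets, inherits the approximation property trivially, and satisfies the nested-intersection property; hence your proof applies verbatim in the setting where the lemma is used. Also, as you observe, the hypotheses as literally stated only yield a (countably additive) measure with values in $[0,1]$; normalization $\overline{\mu}(\Lambda)=1$ comes from the construction in the preceding lemma, not from this statement.
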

	
	The third lemma constructs a family satisfying the criterium.
	
	\begin{lemma}
		For the $\sigma$-algebra $\mathcal{A}$ in Lemma \ref{measure} there exists a family $\mathcal{C}$ satisfying the critirium in Lemma \ref{criterium} which warrants that the measure $\overline{\mu}$ in Lemma \ref{measure} is a probability on $\mathcal{A}$.
	\end{lemma}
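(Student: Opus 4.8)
The plan is to build $\mathcal{C}$ by pulling compact subsets of $\Omega$ back to the horseshoe, first along $\pi$ and then along the iterates of the homeomorphism $F^{-1}:\Lambda\to\Lambda$. Let $\mathcal{K}$ denote the family of compact subsets of $\Omega$ and put
\[
\mathcal{C}_{0} := \{\pi^{-1}(K)\ ; \ K\in\mathcal{K}\}, \qquad \mathcal{C}_{n} := \{F^{-n}(C)\ ; \ C\in\mathcal{C}_{0}\}, \qquad \mathcal{C} := \bigcup_{n\geq 0}\mathcal{C}_{n},
\]
where $\pi$ is viewed as the (continuous, surjective) projection $\Lambda\to\Omega$. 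Since $\pi$ is continuous on the compact set $\Lambda$ and $F^{-1}$ is a homeomorphism of $\Lambda$, every element of $\mathcal{C}$ is a compact, hence Borel, subset of $\Lambda$; as each $K\in\mathcal{K}$ is a Borelian of $\Omega$ we get $\pi^{-1}(K)\in\mathcal{A}_{0}$ and therefore $F^{-n}(\pi^{-1}(K))\in\mathcal{A}_{n}\subset\mathcal{A}$, so $\mathcal{C}\subset\mathcal{A}$.

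The compactness of $\mathcal{C}$ in the sense of Lemma \ref{criterium} is then immediate: if $C_{1}\supset C_{2}\supset\cdots$ is a decreasing sequence of nonempty sets of $\mathcal{C}$, then each $C_{i}$ is a nonempty compact subset of the compact metric space $\Lambda$, and the finite intersection property yields $\bigcap_{i\geq1}C_{i}\neq\emptyset$.

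For the approximation property, take $A\in\mathcal{A}$ and choose $n$ with $A\in\mathcal{A}_{n}$, so that $A=F^{-n}(\pi^{-1}(B))$ for some Borel set $B\subset\Omega$. By the definition of $\overline{\mu}$ in Lemma \ref{measure} and the surjectivity of $\pi:\Lambda\to\Omega$, $\overline{\mu}(A)=\overline{\mu}_{n}(A)=\mu\big(\pi(\pi^{-1}(B))\big)=\mu(B)$. Because $\mu$ is a finite Borel measure on the compact metric space $\Omega$ it is inner regular, so for each $\varepsilon>0$ there is $K\in\mathcal{K}$ with $K\subset B$ and $\mu(K)>\mu(B)-\varepsilon$. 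Then $C:=F^{-n}(\pi^{-1}(K))\in\mathcal{C}_{n}\subset\mathcal{C}$ satisfies $C\subset A$ and $\overline{\mu}(C)=\mu(K)>\overline{\mu}(A)-\varepsilon$; as $\overline{\mu}$ is monotone and $\varepsilon>0$ arbitrary, $\overline{\mu}(A)=\sup\{\overline{\mu}(C)\ ; \ C\in\mathcal{C},\ C\subset A\}$. Since the finite additivity required by Lemma \ref{criterium} holds for $\overline{\mu}$ — any finite disjoint family in $\mathcal{A}$ sits inside a single $\mathcal{A}_{n}$, on which $\overline{\mu}$ coincides with the measure $\overline{\mu}_{n}$ — Lemma \ref{criterium} applies and $\overline{\mu}$ is a probability measure on $\mathcal{A}$.

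I expect no serious obstacle here: the compactness of the class and the finite-additivity check are formal, and the only points needing care are bookkeeping ones — that $\pi^{-1}(K)$ and its $F^{-n}$-images are genuinely compact (continuity of $\pi$ and of $F^{-1}$ on the compact set $\Lambda$), that $\pi$ really maps $\Lambda$ onto $\Omega$ so that $\overline{\mu}(A)$ equals $\mu(B)$ rather than merely bounding it, and the inner regularity of finite Borel measures on the compact metric space $\Omega$. Cleanly tracking how $\overline{\mu}$ transforms under $\pi$ and $F^{-n}$ is the part I would write out most carefully.
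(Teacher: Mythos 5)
Your proposal is correct and follows essentially the same route as the paper: the same class $\mathcal{C}=\bigcup_{n\geq0}F^{-n}\bigl(\pi^{-1}(\mathcal{K})\bigr)$, the same nested-compact-sets argument for the compactness of the class, and the same use of inner regularity of $\mu$ on $\Omega$ together with $\overline{\mu}\bigl(F^{-n}(\pi^{-1}(B))\bigr)=\mu(B)$ for the approximation property. Your explicit check of finite additivity (via the increasing filtration $\mathcal{A}_{n}$) is a small welcome addition that the paper leaves implicit.
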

	\begin{proof}
		We consider $K_{\Omega}$ the collection of all compact sets in $\Omega$ and we define $K_{0} : = \pi^{-1}(K_{\Omega})$, which is a subfamily of the $\sigma$-algebra $\mathcal{A}_{0}$.
		We define $\mathcal{C}: \bigcup_{n = 0}^{\infty} F^{-n}(K_{0})$ which is contained in $\mathcal{A}$ and it is a compact class with the approximation property.
		\begin{itemize}
			\item Let $C_{1} \supset C_{2} \supset \dots \supset C_{n} \supset \dots$ be sets of $\mathcal{C}$. If for every $i \in \mathbb{N}$ there exists $D_{i} \subset K_{\Omega}$ and there exists $n_{i} \in \mathbb{N}$ such that $C_{i} = F^{-n_{i}}(\pi^{-1}(D_{i}))$. Since $\pi^{-1}(D_{i})$ is a closed set in a compact space $\Omega$, $\pi^{-1}(D_{i})$ is compact. By the  continuity of $F^{-1}$, $(C_{i})_{i \in \mathbb{N}}$ is a family of nested compact sets, therefore $\bigcap_{n = 1}^{\infty} C_{n} \neq \emptyset$.
			
			\item Let $A \in \mathcal{A}$. Using the regulatiry of the measure $\mu$ on $\Omega$ we have $\mu(B) = \sup\{\mu(D) \mid D \subset B, D \in K_{\Omega}\}$ for all $D \in \mathcal{A}_{\Omega}$. By the definition of $\mathcal{C}$, for all $C \in \mathcal{C}$ there exists $D \in K_{\Omega}$ and $n \in \mathbb{N}$ such that $C = F^{-n}(\pi^{-1}(D))$. Thus, $\overline{\mu}(C) = \overline{\mu}(F^{-n}(\pi^{-1}(D))) = \mu(\pi(\pi^{-1}(D))) = \mu(D)$. Then
			\[
			\sup\{\mu(D) \mid D \subset B, D \in K_{\Omega}\} = \sup\{\overline{\mu}(C) \mid C \subset A, C \in \mathcal{A} \}.
			\]
			To finish it is enough to note that for all $A \in \mathcal{A}$, there exists $B \in K_{\Omega}$ such that $\overline{\mu}(A) = \mu(B)$. 
		\end{itemize}
		
		Then, by the criterion, we proved that $\overline{\mu}$ is a probability measure on $\mathcal{A}$. Since every $\overline{\mu}_{n}$ is invariant under $F^{-1}$, the measure $\overline{\mu}$ is also $F^{-1}$-invariant.
	\end{proof}
	
	The fourth lemma shows that the measure is $F^{-1}$-invariant on the Borel $\sigma$-algebra.
	
	\begin{lemma}
		The extension of the $\sigma$-algebra $\mathcal{A}$ coincides with the Borel $\sigma$-algebra $\mathcal{A}_{\Lambda}$ on $\Lambda$.
	\end{lemma}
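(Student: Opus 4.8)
The plan is to prove the slightly stronger statement that the algebra $\mathcal{A}=\bigcup_{n\geq 0}\mathcal{A}_n$ separates the points of $\Lambda$, and then to upgrade this to the equality $\sigma(\mathcal{A})=\mathcal{A}_\Lambda$ by a compactness argument; the $F^{-1}$-invariance of $\overline{\mu}$ on $\mathcal{A}_\Lambda$ will then come for free from uniqueness of the Carath\'eodory extension. Since $\mathcal{A}_n=F^{-n}(\pi^{-1}(\mathcal{A}_\Omega))$, a point $X$ lies in $F^{-n}(\pi^{-1}(B))$ precisely when $\pi(F^n(X))\in B$, so $\mathcal{A}$ fails to separate $X$ from $Y$ only if $\pi(F^n(X))=\pi(F^n(Y))$ for every $n\geq 0$. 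I claim this forces $X=Y$, and this is the one substantive point of the proof. Because $\pi(R_0)\subset P_0=[0,1]^2\times\{0\}$ and $\pi(R_1)\subset P_1=[0,1]^2\times\{5/6\}$ are disjoint, the equality $\pi(F^n(X))=\pi(F^n(Y))$ forces $F^n(X)$ and $F^n(Y)$ to lie in the same rectangle $R_{\epsilon_n}$ and to share the same first two coordinates; in particular (taking $n=0$) $X$ and $Y$ differ at most in the $z$-coordinate. Writing $z_n,z_n'$ for the $z$-coordinates of $F^n(X),F^n(Y)$, the formulas defining $F$ on $R_0$ and on $R_1$ give $|z_{n+1}-z_{n+1}'|=\beta_{\epsilon_n}|z_n-z_n'|$, hence $|z_n-z_n'|\geq 3^{\,n}|z_0-z_0'|$ since $\beta_0>6$ and $\beta_1>3$; as $z_n,z_n'\in[0,1]$ this is impossible unless $z_0=z_0'$, i.e. $X=Y$. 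This is the only place where the expansion of $F$ along the central--unstable direction is used; everything else is soft topology and measure theory.

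With separation in hand, I would consider the map $\Phi:\Lambda\to\Omega^{\mathbb{N}}$ defined by $\Phi(X)=(\pi(F^n(X)))_{n\geq 0}$. It is well defined because $\pi(\Lambda)\subset\Omega$ and $\Lambda$ is $F$-invariant, and it is continuous: $\pi$ is continuous on $R_0\cup R_1$ (the two rectangles lie at positive distance, so this is the pasting lemma on the two relatively open pieces), and each $F^{-n}$ is a homeomorphism of $\Lambda$. By the previous paragraph $\Phi$ is injective, so $\Phi$, being a continuous injection of the compact space $\Lambda$ into the Hausdorff space $\Omega^{\mathbb{N}}$, is a homeomorphism onto its image; consequently $\mathcal{A}_\Lambda$ is exactly the $\Phi$-preimage of the Borel $\sigma$-algebra of $\Omega^{\mathbb{N}}$. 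That $\sigma$-algebra is generated by the cylinders $q_n^{-1}(B)$, where $q_n:\Omega^{\mathbb{N}}\to\Omega$ is the $n$-th coordinate projection and $B\in\mathcal{A}_\Omega$; pulling back, $\mathcal{A}_\Lambda$ is generated by the sets $(\pi\circ F^n)^{-1}(B)=F^{-n}(\pi^{-1}(B))$ with $n\geq 0$ and $B\in\mathcal{A}_\Omega$, and these are precisely the members of $\bigcup_{n\geq 0}\mathcal{A}_n=\mathcal{A}$. Hence $\sigma(\mathcal{A})=\mathcal{A}_\Lambda$, which is the assertion of the lemma.

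Finally, for the invariance statement: by the preceding lemma $\overline{\mu}$ is a countably additive probability on the algebra $\mathcal{A}$, so the Carath\'eodory extension theorem produces a unique Borel probability extending it to $\sigma(\mathcal{A})=\mathcal{A}_\Lambda$; since each $\overline{\mu}_n$ is $F^{-1}$-invariant, the probabilities $\overline{\mu}$ and $\overline{\mu}\circ F$ agree on $\mathcal{A}$ and therefore, by uniqueness of the extension, on all of $\mathcal{A}_\Lambda$. Thus $\overline{\mu}$ is an $F^{-1}$-invariant Borel probability on $\Lambda$, as required for Theorem \ref{lift}. I do not expect any real difficulty beyond the separation step: the compactness argument is standard, and the measure-theoretic bookkeeping only uses the fact that $\mathcal{A}$ is an increasing union of $\sigma$-algebras together with the compact-class criterion already verified.
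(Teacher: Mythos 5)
Your proof is correct, but it reaches the conclusion by a different route than the paper. The paper works directly with the topology: for each $X\in\Lambda$ it builds sets $B_n(X)=F^{-n}\big(\pi^{-1}(B(\pi(F^n(X)),\delta_n))\big)\in\mathcal{A}$ and estimates their diameters explicitly by $\rho^n+1/n$, using the contraction of $F^{-1}$ along the $z$-axis for the $z$-extent and the uniform continuity of $G^n$ for the $(x,y)$-extent, so that every point has a shrinking family of $\mathcal{A}$-measurable neighborhoods and hence every open set lies in $\sigma(\mathcal{A})$. You isolate the same dynamical mechanism (hyperbolicity in the $z$-direction) as a point-separation statement: $\pi(F^n(X))=\pi(F^n(Y))$ for all $n\geq 0$ forces $|z_n-z_n'|\geq 3^n|z_0-z_0'|$, hence $X=Y$; you then convert separation into equality of $\sigma$-algebras by the soft argument that the continuous injection $\Phi=(\pi\circ F^n)_{n\geq 0}$ from the compact space $\Lambda$ into $\Omega^{\mathbb{N}}$ is an embedding, so $\mathcal{A}_\Lambda=\Phi^{-1}\big(\text{Borel}(\Omega^{\mathbb{N}})\big)=\sigma(\mathcal{A})$. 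Your version avoids the metric bookkeeping (in particular the tacit fact, needed for the paper's diameter estimate, that points of $B_n(X)\cap\Lambda$ follow the same branches of $F^{-1}$ for $n$ steps), at the price of invoking second countability of $\Omega^{\mathbb{N}}$ to identify its Borel $\sigma$-algebra with the cylinder $\sigma$-algebra; the paper's version yields quantitative shrinking neighborhoods, which is more than the lemma needs but is completely elementary.

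Two small points. The inclusion $\pi(\Lambda)\subset\Omega$ used to define $\Phi$ is true but deserves a line (points of $\Lambda$ lie in $F(R_0\cup R_1)$ and $R_1\cap F(R_1)=\emptyset$, so their projections lie in $S_1\cup S_2\cup S_3$ and, by $\pi\circ F^{-n}=G^n\circ\pi$, their $G$-orbits stay there); alternatively take $P_0\cup P_1$ as the target space and nothing changes. In your closing invariance paragraph it is cleaner to compare $\overline{\mu}$ with $\overline{\mu}\circ F^{-1}$ rather than with $\overline{\mu}\circ F$, since $F^{-1}$ maps $\mathcal{A}_n$ into $\mathcal{A}_{n+1}$ whereas $F(\pi^{-1}(B))$ need not belong to $\mathcal{A}$; as $F|_\Lambda$ is a bijection the two formulations are equivalent, and in any case this part goes beyond what the lemma itself asserts.
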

	\begin{proof}
		Since $\pi$ is continuous we have that $\mathcal{A}$ is contained in the Borel $\sigma$-algebra. It is enough to prove that for all $X \in \Lambda$ there exists a fundamental system of neighborhoods of $X$ contained in $\mathcal{A}$.
		Let $X \in \Lambda$ and remember that $\rho < 1$ is the contraction factor in the direction of the $Z$-axis. For each $n \in \mathbb{N}$, by the uniform continuity of $G^{n}$ there exists $\delta_{n} > 0$ such that $d(z,w) < \delta_{n}$ implies $d(G^{n}(z), G^{n}(w)) < 1/n$.
		We define $B_{n}(X) = F^{-n}(\pi^{-1}(B(\pi(F^{n}(X)), \delta_{n})))$. Therefore, for every $n \in \mathbb{N}$ and $X \in B_{n}(X)$, we have $\text{diam}B_{n}(X) = \rho^{n} + 1/n$ which goes to zero when $n$ goes to infinity.
	\end{proof}
	
	The fifth lemma shows the inequality between the entropies.
	
	\begin{lemma}
		We have $h_{\overline{\mu}}(F^{-1}) \geq h_{\mu}(G)$.
	\end{lemma}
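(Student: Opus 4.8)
The plan is to recognize that the preceding lemmas have, in effect, exhibited $\pi$ as a measure-theoretic factor map from $(\Lambda,F^{-1},\overline{\mu})$ onto $(\Omega,G,\mu)$, and then to run the classical argument that entropy cannot increase under passage to a factor. First I would record that $\pi_{*}\overline{\mu}=\mu$: for a Borel set $B\subset\Omega$ we have $\pi^{-1}(B)\in\mathcal{A}_{0}$, and the defining formula $\overline{\mu}_{0}(\pi^{-1}(B))=\mu(\pi(\pi^{-1}(B)))$ together with the surjectivity of $\pi$ gives $\overline{\mu}(\pi^{-1}(B))=\mu(B)$. Combined with the facts already proved — that $\overline{\mu}$ is an $F^{-1}$-invariant Borel probability on $\Lambda$, that $\mathcal{A}$ generates the Borel $\sigma$-algebra of $\Lambda$, and that $\pi\circ F^{-1}=G\circ\pi$ — this says precisely that $\pi$ is a factor map of measure-preserving systems.

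Next I would compare the metric entropies through partitions. Fix any finite Borel partition $\mathcal{P}$ of $\Omega$; since $\pi$ is continuous, $\pi^{-1}\mathcal{P}$ is a finite Borel partition of $\Lambda$. Iterating $\pi\circ F^{-1}=G\circ\pi$ yields $(F^{-1})^{-k}(\pi^{-1}\mathcal{P})=\pi^{-1}(G^{-k}\mathcal{P})$ for every $k\ge 0$, hence $\bigvee_{k=0}^{n-1}(F^{-1})^{-k}(\pi^{-1}\mathcal{P})=\pi^{-1}\bigl(\bigvee_{k=0}^{n-1}G^{-k}\mathcal{P}\bigr)$. Because $\pi_{*}\overline{\mu}=\mu$, the Shannon entropy of a pulled-back partition equals that of the original, so $H_{\overline{\mu}}\bigl(\bigvee_{k=0}^{n-1}(F^{-1})^{-k}\pi^{-1}\mathcal{P}\bigr)=H_{\mu}\bigl(\bigvee_{k=0}^{n-1}G^{-k}\mathcal{P}\bigr)$ for all $n$; dividing by $n$ and letting $n\to\infty$ gives $h_{\overline{\mu}}(F^{-1},\pi^{-1}\mathcal{P})=h_{\mu}(G,\mathcal{P})$.

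Finally, taking the supremum over all finite Borel partitions $\mathcal{P}$ of $\Omega$ and using $h_{\overline{\mu}}(F^{-1})\ge h_{\overline{\mu}}(F^{-1},\pi^{-1}\mathcal{P})$ for each such $\mathcal{P}$, I obtain $h_{\overline{\mu}}(F^{-1})\ge \sup_{\mathcal{P}}h_{\mu}(G,\mathcal{P})=h_{\mu}(G)$, which is the assertion of the lemma. I do not expect a real obstacle here: this is the standard "a factor has no more entropy than its extension" argument, and all the substantive work — that $\overline{\mu}$ is a well-defined $F^{-1}$-invariant Borel probability and that $\mathcal{A}$ extends to the full Borel $\sigma$-algebra — was done in the earlier lemmas. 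The only point needing a word of care is that we use solely the inequality $\ge$; the reverse inequality would require controlling the fiber entropies $h(F^{-1},\pi^{-1}(X))$, which vanish by Lemma~\ref{zero entropy}, but that refinement is not needed for the present statement.
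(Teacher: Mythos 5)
Your proof is correct, but it follows a genuinely different route from the paper. You treat $\pi$ as a measure-theoretic factor map: you first check $\pi_{*}\overline{\mu}=\mu$ (which indeed follows from the defining formula $\overline{\mu}_{0}(A_{0})=\mu(\pi(A_{0}))$ with $A_{0}=\pi^{-1}(B)$, exactly as the paper itself uses implicitly), then pull back finite Borel partitions of $\Omega$, use $(F^{-1})^{-k}(\pi^{-1}\mathcal{P})=\pi^{-1}(G^{-k}\mathcal{P})$ from the semiconjugacy, and conclude $h_{\overline{\mu}}(F^{-1},\pi^{-1}\mathcal{P})=h_{\mu}(G,\mathcal{P})$ before taking the supremum. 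The paper instead argues at the level of local entropies: it invokes the Brin--Katok formula for $h_{\mu}(G)$, uses uniform continuity of $\pi$ to get the dynamical-ball inclusion $B_{F^{-1}}(Y,\delta,n)\subset\pi^{-1}(B_{G}(X,\epsilon,n))$ for $Y\in\pi^{-1}(X)$, and then compares $\overline{\mu}(B_{F^{-1}}(Y,\delta,n))\leq\mu(B_{G}(X,\epsilon,n))$ to deduce the entropy inequality. Your partition argument is the classical ``a factor has no more entropy than its extension'' statement; it is more elementary in that it needs neither the Brin--Katok theorem nor any ergodicity of $\mu$ or $\overline{\mu}$ (a point the paper's formulation glosses over, since Brin--Katok as stated applies almost everywhere for ergodic measures), and it reduces the whole lemma to the single identity $\overline{\mu}\circ\pi^{-1}=\mu$ plus the semiconjugacy. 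The paper's approach, on the other hand, stays close to the geometric construction of $\overline{\mu}$ via preimages and dynamical balls and yields the pointwise comparison of local entropies directly. Both are valid; your version is arguably cleaner for this particular inequality.
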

	\begin{proof}
		Using the well-known Brin-Katok theorem, we can compute the entropy by the formula:
		\[
		h_{\mu}(G) = \lim_{\epsilon \to 0} \limsup_{n \to \infty} \frac{1}{n} \log \frac{1}{\mu(B_{G}(X, \epsilon, n))},
		\]
		for $\mu$-almost every $X \in \Omega$, where $B_{G}(X, \epsilon, n)$ denotes the dynamical ball of $G$ at $X$, radius $\epsilon$ and length $n$. We note that, given $\epsilon > 0$, by the continuity of $\pi$ (on a compact space), there exists $0 \leq \delta \leq \epsilon$ such that
		\[
		\pi(B(Z,\delta)) \subset B(\pi(Z),\epsilon) \,\, \text{for every} \,\, Z \in \Lambda. 
		\]
		Using this fact it is straightforward to check that
		\[
		B_{F^{-1}}(Y, \delta, n) \subset \pi^{-1}(B_{G}(X, \epsilon, n)) \,\, \text{for every} \,\, Y \in \pi^{-1}(X).
		\]
		By the definition of the measure $\overline{\mu}$ we have
		\[
		\overline{\mu}(B_{F^{-1}}(Y, \delta, n)) \leq \overline{\mu}(\pi^{-1}(B_{G}(X, \epsilon, n))) = \mu(B_{G}(X, \epsilon, n))).
		\]
		If we consider $A \subset \Omega$ a full measure set with respect to $\mu$, then $\pi^{-1}(A) \subset \Lambda$ satisfies $\overline{\mu}(\pi^{-1}(A)) = \mu(A) = 1$. Thus,
		\[
		h_{\overline{\mu}}(F^{-1}) = \lim_{\delta \to 0} \limsup_{n \to \infty} \frac{1}{n} \log \frac{1}{\overline{\mu}(B_{F^{-1}}(Y, \delta, n))} \geq \lim_{\epsilon \to 0} \limsup_{n \to \infty} \frac{1}{n} \log \frac{1}{\mu(B_{G}(X, \epsilon, n))} = h_{\mu}(G).
		\]
	\end{proof}

	\begin{proposition}\label{lift}
		If the measure $\mu$ is an equilibrium state for the system $(G,\phi_{|\Omega})$ then $\overline{\mu}$ given by Theorem \ref{lift} is an equilibrium state for the system $(F^{-1},\varphi)$.
	\end{proposition}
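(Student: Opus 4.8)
The plan is to derive the statement purely by bookkeeping from the three ingredients already available: the pressure identity $P_{\mathrm{top}}(F^{-1},\varphi)=P_{\mathrm{top}}(G,\phi_{|\Omega})$ (Proposition~\ref{zero entropy}), the entropy bound $h_{\overline\mu}(F^{-1})\ge h_\mu(G)$ (Theorem~\ref{lift}), and the variational principle for $(F^{-1},\varphi)$. The one point to record first is that $\overline\mu$ pushes forward to $\mu$ under $\pi$: by the very definition of $\overline\mu$ on $\mathcal{A}_0=\pi^{-1}(\mathcal{A}_\Omega)$ one has $\overline\mu(\pi^{-1}(B))=\mu(B)$ for every Borel $B\subset\Omega$, i.e. $\pi_{\ast}\overline\mu=\mu$. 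Since $\varphi=\phi_{|\Omega}\circ\pi$, this immediately gives
\[
\int\varphi\,d\overline\mu=\int\phi_{|\Omega}\,d(\pi_{\ast}\overline\mu)=\int\phi_{|\Omega}\,d\mu .
\]

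With that identity in hand I would argue as follows. Because $\Omega$ is compact, $G$ has finite topological entropy and $\phi_{|\Omega}$ is bounded, so $P_{\mathrm{top}}(G,\phi_{|\Omega})$ is finite and, $\mu$ being an equilibrium state, $h_\mu(G)$ is finite with
\[
h_\mu(G)+\int\phi_{|\Omega}\,d\mu=P_{\mathrm{top}}(G,\phi_{|\Omega}).
\]
Adding the entropy inequality of Theorem~\ref{lift} to the integral identity above yields
\[
h_{\overline\mu}(F^{-1})+\int\varphi\,d\overline\mu\ \ge\ h_\mu(G)+\int\phi_{|\Omega}\,d\mu\ =\ P_{\mathrm{top}}(G,\phi_{|\Omega})\ =\ P_{\mathrm{top}}(F^{-1},\varphi),
\]
the last equality being Proposition~\ref{zero entropy}. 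Since $\overline\mu$ is an $F^{-1}$-invariant Borel probability (this is exactly what the lemmas preceding the statement establish), the variational principle for $(F^{-1},\varphi)$ gives the reverse inequality $h_{\overline\mu}(F^{-1})+\int\varphi\,d\overline\mu\le P_{\mathrm{top}}(F^{-1},\varphi)$. Hence equality holds throughout and $\overline\mu$ is an equilibrium state for $(F^{-1},\varphi)$.

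I do not expect a genuine obstacle here: the substantive analytic content was already spent on Theorem~\ref{lift} (constructing $\overline\mu$, showing it is a well-defined $F^{-1}$-invariant probability on the Borel $\sigma$-algebra, and proving the Brin-Katok entropy comparison) and on the pressure identity of Proposition~\ref{zero entropy}. The only things to be careful about are the measurability and finiteness issues that allow the two inequalities to be added — and these are automatic from compactness of $\Lambda$ and $\Omega$ together with boundedness of $\phi$ — and the verification that $\pi_{\ast}\overline\mu=\mu$, which is immediate from the definition of $\overline\mu$ on $\mathcal{A}_0$.
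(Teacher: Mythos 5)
Your proposal is correct and follows essentially the same route as the paper: identify $\pi_{\ast}\overline{\mu}=\mu$ (the paper's ``change of variables'') to equate $\int\varphi\,d\overline{\mu}$ with $\int\phi_{|\Omega}\,d\mu$, add the entropy inequality $h_{\overline{\mu}}(F^{-1})\ge h_{\mu}(G)$ from Theorem~\ref{lift}, and squeeze between the equal pressures $P_{\mathrm{top}}(G,\phi_{|\Omega})=P_{\mathrm{top}}(F^{-1},\varphi)$ via the variational principle. No gap; if anything, your explicit statement of $\pi_{\ast}\overline{\mu}=\mu$ makes the integral identity slightly more transparent than the paper's phrasing.
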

	\begin{proof}
		Given an equilibrium state $\mu$ for the system $(G,\phi_{|\Omega})$, Theorem \ref{lift} gives an $F^{-1}$-invariant Borel probability $\overline{\mu}$ such that $h_{\overline{\mu}}(F^{-1}) \geq h_{\mu}(G)$. Hence,
		\[
		P_{\text{top}}(G) = h_{\mu}(G) + \int \phi_{\Omega} d\mu \leq h_{\overline{\mu}}(F^{-1}) + \int \varphi d\overline{\mu} \leq P_{\text{top}}(F^{-1}).
		\]
		Once $P_{\text{top}}(G) = P_{\text{top}}(F^{-1})$ and by a change of variables we get
		\[
		\int \phi_{\Omega} d\mu = \int \phi_{\Omega} \circ \pi d\overline{\mu} = \int \varphi d\overline{\mu},
		\]
		then we have that $\overline{\mu}$ is an equilibrium state for the system $(F^{-1},\varphi)$.
	\end{proof}
	\begin{proposition}\label{proj}
		If the measure $\mu$ is an equilibrium state for the system $(F^{-1},\varphi)$ then the measure given  by $\nu = \mu \circ \pi^{-1}$ is an equilibrium state for the system $(G,\phi_{|\Omega})$.
	\end{proposition}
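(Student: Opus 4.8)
The plan is to transport the equilibrium property across the factor map $\pi$, exploiting that the two topological pressures coincide and that all fibres of $\pi$ carry zero topological entropy. First I would record that $\nu=\mu\circ\pi^{-1}$ is $G$-invariant: from $\pi\circ F^{-1}=G\circ\pi$ one gets $\pi^{-1}(G^{-1}B)=F(\pi^{-1}B)$ for every Borel set $B\subset\Omega$, hence $\nu(G^{-1}B)=\mu\big(F(\pi^{-1}B)\big)=\mu(\pi^{-1}B)=\nu(B)$, the middle equality being the $F^{-1}$-invariance of $\mu$ (equivalently its $F$-invariance, as $F$ is a homeomorphism on $\Lambda$). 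A change of variables then gives $\int\varphi\,d\mu=\int(\phi_{|\Omega}\circ\pi)\,d\mu=\int\phi_{|\Omega}\,d\nu$.

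Next I would apply the Ledrappier--Walters inequality to the factor map $\pi$ between $(\Lambda,F^{-1})$ and $(\Omega,G)$ — the same tool already invoked in the proof of Proposition~\ref{zero entropy} — together with the vanishing of the fibre entropies $h(F^{-1},\pi^{-1}(X))=0$ for $X\in\Omega$ (Lemma~\ref{zero entropy}). This yields
\[
h_{\mu}(F^{-1})\ \le\ h_{\nu}(G)+\int_{\Omega}h\big(F^{-1},\pi^{-1}(X)\big)\,d\nu(X)\ =\ h_{\nu}(G).
\]
Combining this with the value of $\int\varphi\,d\mu$ and with the hypothesis that $\mu$ is an equilibrium state for $(F^{-1},\varphi)$, I obtain
\[
P_{\text{top}}(F^{-1},\varphi)=h_{\mu}(F^{-1})+\int\varphi\,d\mu\ \le\ h_{\nu}(G)+\int\phi_{|\Omega}\,d\nu\ \le\ P_{\text{top}}(G,\phi_{|\Omega}),
\]
the last step being the variational principle for $(G,\phi_{|\Omega})$. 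Since $P_{\text{top}}(F^{-1},\varphi)=P_{\text{top}}(G,\phi_{|\Omega})$ by Proposition~\ref{zero entropy}, every inequality in this chain is an equality; in particular $h_{\nu}(G)+\int\phi_{|\Omega}\,d\nu=P_{\text{top}}(G,\phi_{|\Omega})$, so $\nu$ is an equilibrium state for $(G,\phi_{|\Omega})$, as claimed.

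There is no deep step here: the whole argument is the standard principle that a push-forward along a factor map with zero fibre entropy loses no free energy, so the work is entirely bookkeeping. The points that deserve attention are checking that $\pi$ restricts to a continuous surjection $\Lambda\to\Omega$ intertwining $F^{-1}$ and $G$ (so that $\nu$ is a genuine $G$-invariant Borel probability on $\Omega$), and being careful to quote Ledrappier--Walters in the direction $h_{\mu}(F^{-1})\le h_{\nu}(G)+\int h(F^{-1},\pi^{-1}(X))\,d\nu(X)$, since it is exactly this upper bound that, fed with Lemma~\ref{zero entropy}, forces $h_{\mu}(F^{-1})\le h_{\nu}(G)$ and thereby closes the pressure chain.
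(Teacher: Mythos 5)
Your argument is correct and is essentially the paper's own proof, only written out in more detail: the paper likewise pushes $\mu$ forward by $\pi$, invokes the Ledrappier--Walters inequality together with the vanishing fibre entropies of Lemma~\ref{zero entropy} to get $h_{\mu}(F^{-1})+\int\varphi\,d\mu\le h_{\nu}(G)+\int\phi_{|\Omega}\,d\nu\le P_{\text{top}}(G,\phi_{|\Omega})$, and closes the chain with the pressure equality of Proposition~\ref{zero entropy}. No substantive difference to report.
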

	\begin{proof}
		Let $\mu$ an equilibrium state for the system $(F^{-1},\varphi)$ and define $\nu(A) = \mu(\pi^{-1}(A))$ for all Borelian $A$ in $\Omega$. Again by semiconjugacy, the probability $\nu$ is $G$-invariant. We have that
		\[
		P_{\text{top}}(F^{-1},\varphi) = h_{\mu}(F^{-1}) + \int \varphi d\mu \leq  h_{\nu}(G) + \int \phi_{|\Omega} d\nu \leq P_{\text{top}}(G),
		\]
		By Proposition \ref{zero entropy} we have $P_{\text{top}}(G,\phi_{|\Omega}) = P_{\text{top}}(F^{-1},\varphi)$, which implies that $\nu$ is an equilibrium state for the system $(G,\phi_{|\Omega})$.
	\end{proof}
	
	\begin{proposition}
		The system $(F^{-1},\varphi)$ has uniqueness of equilibrium state if, and only if, the system $(G,\phi_{|\Omega})$ does so.
	\end{proposition}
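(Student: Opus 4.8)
The plan is to upgrade Propositions \ref{lift} and \ref{proj} to the statement that lifting $\nu \mapsto \overline{\nu}$ and projecting $\mu \mapsto \mu\circ\pi^{-1}$ are mutually inverse bijections between the equilibrium states of $(F^{-1},\varphi)$ and the equilibrium states of $(G,\phi_{|\Omega})$. Granting this, the proposition follows at once in both directions. If $(G,\phi_{|\Omega})$ has a unique equilibrium state $\nu$, then $\overline{\nu}$ is an equilibrium state for $(F^{-1},\varphi)$ by Proposition \ref{lift}; and if $\mu$ is any equilibrium state for $(F^{-1},\varphi)$, then $\mu\circ\pi^{-1}$ is an equilibrium state for $(G,\phi_{|\Omega})$ by Proposition \ref{proj}, hence equals $\nu$, so $\mu = \overline{\mu\circ\pi^{-1}} = \overline{\nu}$; thus $(F^{-1},\varphi)$ has a unique equilibrium state. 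Conversely, if $(F^{-1},\varphi)$ has a unique equilibrium state $\mu$, then $\mu\circ\pi^{-1}$ is an equilibrium state for $(G,\phi_{|\Omega})$, and any two equilibrium states $\nu_1,\nu_2$ for $(G,\phi_{|\Omega})$ lift to equilibrium states $\overline{\nu_1},\overline{\nu_2}$ for $(F^{-1},\varphi)$, so $\overline{\nu_1}=\overline{\nu_2}=\mu$ and therefore $\nu_1 = \overline{\nu_1}\circ\pi^{-1} = \overline{\nu_2}\circ\pi^{-1} = \nu_2$.

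So the real content is the bijectivity, and I would extract it from the construction already carried out for Theorem \ref{lift}. One inclusion is free: since $\overline{\mu}$ restricted to $\mathcal{A}_0 = \pi^{-1}(\mathcal{A}_\Omega)$ satisfies $\overline{\mu}(\pi^{-1}(B)) = \mu(\pi(\pi^{-1}(B))) = \mu(B)$ for every Borel $B\subseteq\Omega$, we have $\overline{\mu}\circ\pi^{-1} = \mu$, so projecting a lift returns the original $G$-measure. For the other inclusion I would show that an arbitrary $F^{-1}$-invariant probability $\lambda$ is the lift of its own projection: for $A_0 = \pi^{-1}(B)\in\mathcal{A}_0$, $F^{-1}$-invariance of $\lambda$ gives $\lambda(F^{-n}(A_0)) = \lambda(A_0) = \lambda(\pi^{-1}(B)) = (\lambda\circ\pi^{-1})(B)$, which is exactly the value prescribed for $\overline{\lambda\circ\pi^{-1}}$ on $F^{-n}(A_0)$; hence $\lambda$ and $\overline{\lambda\circ\pi^{-1}}$ agree on the algebra $\bigcup_{n\ge 0}\mathcal{A}_n$. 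Since this algebra generates, up to completion, the Borel $\sigma$-algebra on $\Lambda$ --- this is the lemma asserting that the extension of $\mathcal{A}$ is $\mathcal{A}_\Lambda$, proved via $\operatorname{diam} B_n(X)\to 0$ --- two probability measures agreeing on it coincide, so $\lambda = \overline{\lambda\circ\pi^{-1}}$. Applying this with $\lambda$ an equilibrium state of $(F^{-1},\varphi)$ and combining with Propositions \ref{lift} and \ref{proj} closes the loop.

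I expect the only delicate point to be the monotone-class step: one should check that $\bigcup_{n\ge 0}\mathcal{A}_n$ is genuinely an algebra (an increasing union of $\sigma$-algebras is one), invoke a $\pi$--$\lambda$ argument to pass from agreement on it to agreement on the generated $\sigma$-algebra, and observe that passing to the completion does not affect equality of the two measures. Everything else is bookkeeping with the already-proved lifting and projection statements, so the write-up itself will be short; I would present both directions explicitly for symmetry.
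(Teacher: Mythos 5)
Your proposal is correct and follows essentially the same route as the paper: both rest on Propositions \ref{lift} and \ref{proj} together with the fact that an $F^{-1}$-invariant Borel probability is determined by its values on the algebra $\bigcup_{n\geq 0}F^{-n}(\pi^{-1}(\mathcal{A}_{\Omega}))$ (hence by its projection), the paper phrasing this as a proof by contradiction where you phrase it as the lift and the projection being mutually inverse. Your explicit $\pi$--$\lambda$/monotone-class step to pass from agreement on that algebra to agreement on the Borel $\sigma$-algebra is in fact slightly more careful than the paper's wording, which treats the increasing union as if it were already the Borel $\sigma$-algebra.
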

	\begin{proof}
		Assume that the system $(G,\phi_{|\Omega})$ has uniqueness of equilibrium state $\mu$. Proposition \ref{lift} guarantees that $\overline{\mu}$ is an equilibrium state for the system $(F^{-1}, \varphi)$. Let us suppose, by contradition, that there exists another equilibrium state $\overline{\nu}$. We will show that $\nu : = \overline{\nu} \circ \pi^{-1}$ is different from the equilibrium state $\mu$. 
		
		In fact, since $\overline{\nu} \neq \overline{\mu}$, we can find $A \in \mathcal{A}$ such that $\overline{\nu}(A) \neq \overline{\mu}(A)$. The $\sigma$-algebra $\mathcal{A}$, which coincides with the Borelians on $\Lambda$, was obtained as $\mathcal{A} = \bigcup_{n=0}^{\infty}F^{-n}(\mathcal{A}_{0})$. Thus, there exist $A_{0} \in \mathcal{A}_{0}$ and $n_{0} \in \mathbb{N}$ such that $A = F^{-n_{0}}(A_{0})$.
		
		Remember also that $\mathcal{A}_{0} = \pi^{-1}(\mathcal{A}_{\Omega})$ and therefore there exists $B \in \mathcal{A}_{\Omega}$ such that $A_{0} = \pi^{-1}(B)$. Using that $\overline{\mu}$ is invariant under $F^{-1}$ we have
		\[
		\nu(B) = \overline{\nu}(\pi^{-1}(B)) = \overline{\nu}(A_{0}) = \overline{\nu}(F^{-n_{0}}(A_{0})) = \overline{\nu}(A) \neq
		\]
		\[
		\overline{\mu}(A) = \overline{\nu}(F^{-n_{0}}(A_{0})) = \overline{\nu}(A_{0}) = \overline{\mu}(\pi^{-1}(B)) = \mu(B).
		\]
		It means that $\nu \neq \mu$ and the first part is proven.
		
		Now, assuming that the system $(F^{-1},\varphi)$ has uniqueness, let us suppose, by contradiction, that the system $(G,\phi_{|\Omega})$ admits two distinct equilibrium states $\mu$ and $\nu$. Proposition \ref{proj} guarantees that $\overline{\mu}$ and $\overline{\nu}$ are equilibrium states for the system $(F^{-1},\varphi)$. By uniqueness, we have $\overline{\mu} = \overline{\nu}$. Given $B \in \mathcal{A}_{\Omega}$, analogously to what is given above, we can consider the following
		\[
		\nu(B) = \overline{\nu}(\pi^{-1}(B)) = \overline{\nu}(A_{0}) = \overline{\nu}(F^{-n_{0}}(A_{0})) = \overline{\nu}(A) =
		\]
		\[
		\overline{\mu}(A) = \overline{\nu}(F^{-n_{0}}(A_{0})) = \overline{\nu}(A_{0}) = \overline{\mu}(\pi^{-1}(B)) = \mu(B).
		\]
		Once $B$ is arbitrary, we conclude that $\mu = \nu$. This contradiction proves the uniqueness. The Proposition is proven.
	\end{proof}

    Finally, we prove that potentials which depends on the $z$-coordinate have cohomologous which does not depend on it. We follow ideas of \cite{RS}.
    \begin{proposition}(\cite{RS} Proposition 3.1)
        Let $\phi : R_{0} \cup R_{1} \to \mathbb{R}$ be a H\"older continuous potential. There exist a H\"older continuous potential $\varphi : R_{0} \cup R_{1} \to \mathbb{R}$ satisfying:
        \begin{itemize}
            \item $\varphi$ is cohomologous to $\phi$, that is, there exists $u : R_{0} \cup R_{1} \to \mathbb{R}$ such that $\varphi = \phi + u - u \circ F^{-1}$.

            \item $\varphi$ does not depend on the $z$-coordinate.
        \end{itemize}
    \end{proposition}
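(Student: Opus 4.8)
The plan is to produce the coboundary by a telescoping (Liv\v{s}ic--type) series, exploiting that $F^{-1}$ contracts the $z$--direction uniformly, in the spirit of \cite{RS}. Put $\tau:=\max\{\beta_{0}^{-1},\beta_{1}^{-1}\}$; since $\beta_{0}>6$ and $3<\beta_{1}<4$ one has $\tau<\tfrac{1}{3}$, and a direct inspection of the two branch formulas for $F$ shows that the $z$--coordinate of $F^{-1}$ depends only on $z$, and that the $z$--distance of two points lying on a common vertical fibre is multiplied by $\beta_{0}^{-1}$ (branch coming from $R_{0}$) or by $\beta_{1}^{-1}$ (branch coming from $R_{1}$). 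Let $p:=\pi$ be the vertical projection onto the reference slices $\{z=0\}$ and $\{z=5/6\}$; recall it is continuous, idempotent, and satisfies $\pi\circ F^{-1}=G\circ\pi$. Two consequences drive the argument: (i) for every $X$ the points $F^{-n}(X)$ and $F^{-n}(pX)$ lie on a common vertical fibre --- because $\pi(F^{-n}(pX))=G^{n}(\pi X)=\pi(F^{-n}(X))$ --- so that $d(F^{-n}(X),F^{-n}(pX))\le\tau^{n}$; and (ii) $p(F^{-1}(X))=p(F^{-1}(pX))$, since $\pi(F^{-1}(pX))=G(\pi X)=\pi(F^{-1}(X))$ and $\pi$ is idempotent. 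I would also check that $\{F^{-n}(pX)\}_{n\ge0}$ never leaves $R_{0}\cup R_{1}$: the point $pX$ has the same symbolic itinerary as $X$, so the branch of $F^{-1}$ applied at step $n$ is the same for both orbits, and on each branch the $z$--coordinate is sent into $[0,1/6]$ or $[5/6,1]$ precisely because $\beta_{0}>6$ and $3<\beta_{1}<4$; this legitimizes evaluating $\phi$ (which is defined on all of $R_{0}\cup R_{1}$) along these orbits.

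Next I would define
\[
u(X)\ :=\ \sum_{n\ge0}\bigl[\phi(F^{-n}(pX))-\phi(F^{-n}(X))\bigr].
\]
Since $\phi$ is H\"older of some exponent $\gamma$ and $d(F^{-n}(X),F^{-n}(pX))\le\tau^{n}$, the $n$--th term is $O(\tau^{\gamma n})$, so the series converges uniformly and $u$ is bounded. For H\"older continuity of $u$ I would use the usual head/tail split: fix $X,X'$ close and set $N\asymp|\log d(X,X')|$; the tail $\sum_{n>N}$ is $O(\tau^{\gamma N})$, a positive power of $d(X,X')$, while on the head one estimates $|\phi(F^{-n}(X))-\phi(F^{-n}(X'))|\le C\,(\sup\|DF^{-1}\|)^{\gamma n}\,d(X,X')^{\gamma}$ (and likewise for $pX,pX'$), sums over $n\le N$, and gets another positive power of $d(X,X')$ as long as the implicit constant in $N$ is small enough; note that only $\sup\|DF^{-1}\|<\infty$ is needed here, not expansion. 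Hence $u$ is H\"older continuous, and therefore so is
\[
\varphi\ :=\ \phi+u-u\circ F^{-1},
\]
which is cohomologous to $\phi$ with transfer function $u$ by construction.

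Then I would verify that $\varphi$ does not depend on the $z$--coordinate, i.e.\ that $\varphi(X)=\varphi(X')$ whenever $\pi(X)=\pi(X')$. Telescoping the $n=0$ term out of $u$ gives
\[
\phi(X)+u(X)\ =\ \phi(\pi X)+\sum_{n\ge1}\bigl[\phi(F^{-n}(\pi X))-\phi(F^{-n}(X))\bigr],
\]
so if $\pi X=\pi X'$ then $\bigl(\phi(X)+u(X)\bigr)-\bigl(\phi(X')+u(X')\bigr)=\sum_{n\ge1}\bigl[\phi(F^{-n}(X'))-\phi(F^{-n}(X))\bigr]$. Applying the same identity to $F^{-1}(X)$ and $F^{-1}(X')$ --- which again lie on a common fibre and have the \emph{same} projection by consequence (ii) --- yields $u(F^{-1}(X))-u(F^{-1}(X'))=\sum_{m\ge1}\bigl[\phi(F^{-m}(X'))-\phi(F^{-m}(X))\bigr]$. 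Subtracting the two displays, every term cancels and $\varphi(X)-\varphi(X')=0$. Thus $\varphi$ factors through $\pi$, i.e.\ it is constant on the vertical fibres, which is exactly the assertion.

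The \emph{main obstacle} is not the telescoping bookkeeping but the two regularity/consistency points: proving H\"older continuity of $u$ with a genuinely positive exponent (the head/tail balance must be carried out carefully, keeping track of the finite expansion of $F^{-1}$ in the $(x,y)$--directions), and checking rigorously that the reference orbits $F^{-n}(pX)$ remain in $R_{0}\cup R_{1}$ so that the defining series makes sense --- both of which rely on the precise constants $\lambda_{0}<1/3$, $\beta_{0}>6$, $3<\beta_{1}<4$, $\sigma<1/3$ built into $F$.
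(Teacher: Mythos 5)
Your proposal takes essentially the same route as the paper, which simply recalls from \cite{RS} the transfer function $u(X)=\sum_{j\ge 0}\bigl(\phi\circ F^{j}-\phi\circ F^{j}\circ\pi\bigr)(X)$: you construct the same telescoping coboundary comparing the orbit of $X$ with that of its projection $\pi X$, and you additionally supply the convergence, H\"older-regularity and cancellation details that the paper leaves to the citation. Note that your choice of backward iterates $F^{-n}$ (with your sign convention) is the form that actually converges and fits the stated cohomology $\varphi=\phi+u-u\circ F^{-1}$, since it is $F^{-1}$ that contracts the $z$-fibres; the forward iterates in the paper's displayed formula should be read in that sense.
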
 	
	\begin{proof}
          They take
          \[
        u(X) = \sum_{j=0}^{\infty}(\phi \circ F^{j} - \phi \circ F^{j} \circ \pi)(X).
          \]
          and prove that $\varphi$ like above does not depend on the $z$-coordinate.
        \end{proof}
	\section{Hyperbolic Potentials versus Expanding Potentials}\label{versus}
	
	In the works \cite{AOS} and \cite{RV} the authors prove existence of equilibrium state for non-uniformly expanding maps and the so-called hyperbolic potentials. While in \cite{RV} they also prove uniqueness with a mild condition, in \cite{AOS} prove an equivalence between the so-called expanding potentials and the hyperbolic ones. It will allows us to obtain uniqueness for the horseshoe in some cases where the restriction of the potential is expanding (or hyperbolic) by using Theorem \ref{transfer}. We recall the definitions of hyperbolic and expanding potentials as it is given in \cite{AOS}.
	
	\subsection{Topological pressure}
	
	We recall the definition of relative pressure for non-compact sets by means of dynamical balls. Let $M$ be a compact metric space. Consider $f:M\to M$ and $\phi: M \to \mathbb{R}$ both continuous. Given $\delta > 0$, $n \in \mathbb{N}$ and $x \in M$, we define the 
	{\textit{dynamical ball}} $B_{\delta}(x,n)$ as 
	$$
	B_{\delta}(x,n) = \{y \in M \mid d(f^{i}(x),f^{i}(y)) < \delta, \,\, \text{for} \,\, 0 \leq i \leq n\}.
	$$
	Consider for each $N \in \mathbb{N}$, the set
	$$
	\mathcal{F}_{N} = \left\{B_{\delta}(x,n) \mid x \in M\text{ and } n \geq N\right\}.
	$$
	Given $\Lambda \subset M$, denote by $\mathcal{F}_{N}(\Lambda)$ the finite or countable families of elements   in $ \mathcal{F}_{N}$ 
	which cover $\Lambda$.
	Define for $n \in \mathbb{N}$
	$$
	S_{n}\phi(x) = \phi(x) + \phi(f(x)) + \dots + \phi(f^{n-1}(x)),
	$$
	and
	$$
	\displaystyle R_{n,\delta}\phi(x) = \sup_{y \in B_{\delta}(x,n)} S_{n}\phi(y).
	$$
	Given a $f$-invariant set $\Lambda \subset M$, not necessarily compact, define for each $\gamma \in\mathbb R$
	$$
	\displaystyle m_{f}(\phi, \Lambda, \delta,   \gamma, N) = \inf_{\mathcal{U} \in \mathcal{F}_{N}(\Lambda)} \left\{ \sum_{B_{\delta}(x,n) \in \mathcal{U}}
	e^{-\gamma n + R_{n,\delta}\phi(x)} \right\}.
	$$
	Define also
	$$
	\displaystyle m_{f}(\phi, \Lambda, \delta, \gamma) = \lim_{N \to + \infty} m_{f}(\phi, \Lambda, \delta,   \gamma,N)
	$$
	and
	$$
	P_{f}(\phi, \Lambda, \delta) = \inf \{\gamma \in\mathbb R \mid m_{f}(\phi, \Lambda, \delta, \gamma) = 0\}.
	$$
	Finally, define the {\textit{relative pressure}} of $\phi$ on $\Lambda$ as
	$$
	P_{f}(\phi,\Lambda) = \lim_{\delta \to 0} P_{f}(\phi, \Lambda, \delta).
	$$
	The {\textit{topological pressure}} of $\phi$ is, by definition, $P_{f}(\phi) = P_{f}(\phi, M)$. It satisfies
	\begin{eqnarray}
		\label{Pressures}
		P_{f}(\phi) = \sup \{P_{f}(\phi,\Lambda), P_{f}(\phi,\Lambda^{c})\},
	\end{eqnarray}
	where $\Lambda^{c}$ denotes the complement of $\Lambda$ on $M$. We refer the reader to \cite[Theorem 11.2]{P}   for the proof of~\eqref{Pressures} and for additional properties of the pressure.
	
	\subsection{Expanding measures and hyperbolic potentials}


	Given $0<\sigma<1$ and $ \varepsilon > 0$,
	we say that $n \in \mathbb{N}$ is a {\textit{hyperbolic time}} for $x\in M$ if
	\begin{itemize}
		\item there exists a neighbourhood $V_{n}(x)$ of $x$ such that $f^{n}$ maps  $ V_{n}(x)$ homeomorphically onto the ball $ B_{\varepsilon}(f^{n}(x))$;
		\item $d(f^{i}(y),f^{i}(z)) \leq \sigma^{n-i}d(f^{n}(y),f^{n}(z))$, for all $y,z \in V_{n}(x)$ and all $0 \leq i \leq n-1.$
	\end{itemize}
	We say that $x \in M$ has {\textit{positive frequency}} of $(\sigma,\varepsilon)$-hyperbolic times if
	$$
	d(x):=\limsup_{n \to \infty}\frac{1}{n}\#\{0 \leq j \leq n-1 \,\, | \,\, j \,\, \text{is a} \,\, \text{-hyperbolic time for} \,\, x\} > 0,
	$$
	and define the {\textit{expanding set}}
	$$
	H = \{x \in M \mid  d(x)>0\}.
	$$
	We say that a Borel probability  measure $\mu$ (not necessarily $f$-invariant) on $M$   is {\textit{expanding}} if $\mu(H)=1$, and that a   continuous function $\phi : M \to \mathbb{R}$ is a \emph{hyperbolic potential} if   the topological pressure $P_{f}(\phi)$ is located on $H$, i.e.
	$$P_{f}(\phi,H^{c}) < P_{f}(\phi).$$
	
	\begin{proposition}\label{pr.hypmeas}
		Let $\phi$ be  a hyperbolic potential. If $\mu$ is an ergodic probability measure such that $h_{\mu}(f) + \int \phi d\mu > P_{f}(\phi,H^{c})$, then $\mu(H)=1$.
	\end{proposition}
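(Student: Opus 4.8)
The plan is to argue by contradiction, reducing the statement to the (inverse) variational principle for the topological pressure on the non-compact invariant set $H^{c}$. First I would record that the expanding set $H$ is $f$-invariant: whether $j$ is a $(\sigma,\varepsilon)$-hyperbolic time for a point is a condition on its forward orbit, and replacing $x$ by $f(x)$ only shifts the index set of hyperbolic times by one, so the density $d(\cdot)$ appearing in the definition of $H$ satisfies $d\circ f=d$. Consequently the indicator function of $H$ is $f$-invariant, and since $\mu$ is ergodic we get the dichotomy $\mu(H)\in\{0,1\}$. It therefore suffices to exclude the case $\mu(H)=0$.

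Assume then $\mu(H)=0$, i.e.\ $\mu(H^{c})=1$. The set $H^{c}$ is Borel and $f$-invariant, and the relative pressure $P_{f}(\phi,H^{c})$ bounds the free energy of every invariant measure that gives it full mass: by the variational inequality for the pressure of non-compact invariant sets (covering $\mu$-generic dynamical balls and using the Brin--Katok local entropy formula; see \cite[Theorem~11.2]{P}), one has
\[
h_{\mu}(f)+\int\phi\,d\mu\ \le\ P_{f}(\phi,H^{c}).
\]
This contradicts the hypothesis $h_{\mu}(f)+\int\phi\,d\mu>P_{f}(\phi,H^{c})$. Hence $\mu(H)\neq 0$, and by the dichotomy above $\mu(H)=1$, as claimed. (Note that the assumption that $\phi$ is a hyperbolic potential is not used directly in this argument; it only guarantees that the proposition is non-vacuous, since an equilibrium state has free energy $P_{f}(\phi)>P_{f}(\phi,H^{c})$.)

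The only point that is not purely formal is the inequality $h_{\mu}(f)+\int\phi\,d\mu\le P_{f}(\phi,Z)$ for an $f$-invariant Borel set $Z$ with $\mu(Z)=1$, and this is the step I expect to be the main obstacle; it is precisely where the machinery of the preceding subsection (dynamical balls $B_{\delta}(x,n)$, the sums $m_{f}(\phi,Z,\delta,\gamma,N)$, and the functions $R_{n,\delta}\phi$) enters. Concretely I would fix $\delta>0$, use Brin--Katok and the ergodic theorem to select a set of positive $\mu$-measure on which simultaneously $\tfrac1n\log\frac{1}{\mu(B_{\delta}(x,n))}\to h_{\mu}(f)$ and $\tfrac1n S_{n}\phi(x)\to\int\phi\,d\mu$, and then show that any covering of $Z$ by balls in $\mathcal{F}_{N}(Z)$ must carry total weight at least $e^{\,n(h_{\mu}(f)+\int\phi\,d\mu-o(1))}$, so that $m_{f}(\phi,Z,\delta,\gamma)>0$ whenever $\gamma<h_{\mu}(f)+\int\phi\,d\mu$; this forces $P_{f}(\phi,Z,\delta)\ge h_{\mu}(f)+\int\phi\,d\mu$, and letting $\delta\to 0$ gives the inequality. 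Everything else is routine bookkeeping.
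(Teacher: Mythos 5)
Your proposal is correct and follows essentially the same route as the paper: ergodicity of $\mu$ together with invariance of $H$ gives the dichotomy $\mu(H)\in\{0,1\}$, and the case $\mu(H^{c})=1$ is excluded by the standard variational bound $h_{\nu}(f)+\int\phi\,d\nu\le P_{f}(\phi,H^{c})$ for invariant measures with $\nu(H^{c})=1$, which the paper likewise quotes from Pesin's book rather than reproving. Your side remark that the hyperbolicity of $\phi$ is not used beyond making the hypothesis on $\mu$ meaningful is consistent with the paper's argument as well.
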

	
	\begin{proof}
		Since $H$ is an invariant set and $\mu$ is an ergodic probability measure, we have $\mu(H) = 0$ or $\mu(H) = 1$. Since the potential $\phi$ is hyperbolic, we get
		$$
		h_{\mu}(f) + \int \phi d\mu > P_{f}(\phi,H^{c}) \geq \sup_{\nu(H^{c})=1}
		\bigg{\{}h_{\nu}(f) + \int_{H^{c}} \phi d\nu\bigg{\}}.
		$$
		(For the second inequality, see  \cite[Theorem A2.1]{P})
		So, we cannot have $\mu(H^{c}) = 1$ and we obtain $\mu(H) = 1$, i.e. $\mu$ is an expanding measure.
	\end{proof}

	\subsection{Expanding potentials}Let $\phi:M \to \mathbb{R}$ be a continuous potential. We say that $\phi$ is an \textit{expanding potential} if the following inequality holds
	\[
	\displaystyle \sup_{\mu \in \mathcal{E}^{c}} \bigg{\{} h_{\mu}(f) + \int \phi d \mu \bigg{\}} < \sup_{\mu \in \mathcal{E}} \bigg{\{} h_{\mu}(f) + \int \phi d \mu \bigg{\}},	
	\]
	where $\mathcal{E}$ denotes the set of all expanding measures for $f$. 
	
	We observe that Proposition \ref{pr.hypmeas} shows that every hyperbolic potential is an expanding potential. Since for an expanding potential $\phi$ we can find a sequence $\mu_{n}$ of expanding measures such that we have $h_{\mu_{n}}(f) + \int \phi d \mu_{n} \to P_{f}(\phi)$, if $\phi$ is also H\"{o}lder , we also can find finitely many ergodic equilibrium states for $\phi$ which are expanding measures. In fact, all the proof is similar from the point where Proposition \ref{pr.hypmeas} is proved. We obtain the following Theorem:
	
	\begin{theorem}(\cite{AOS}[Theorem B])\label{HYPERZOOM}
		Let $f:M \to M$ be a continuous map with an expanding measure and $\phi : M \to \mathbb{R}$ a continuous potential. Then $\phi$ is a hyperbolic potential if, and only if, it is an expanding potential. In particular, if $\phi$ is hyperbolic and H\"older with finite pressure $P_{f}(\phi)$, there exist finitely many equilibrium states which are expanding measures.
	\end{theorem}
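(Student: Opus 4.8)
The plan is to prove the equivalence of the two conditions first, and then to read off the finiteness assertion, which is a by-product of the construction of \cite{AOS}. Write $\ell(\mu)=h_{\mu}(f)+\int\phi\,d\mu$ for the free energy of an $f$-invariant probability $\mu$, and read $\mathcal{E}$ and $\mathcal{E}^{c}$ as the invariant measures giving full mass to $H$ and to $H^{c}$ respectively; on ergodic measures these are precisely the expanding and the non-expanding ones, since $H$ is invariant. The argument rests on two bookkeeping facts. First, since the ergodic measures form the disjoint union of those in $\mathcal{E}$ and those in $\mathcal{E}^{c}$, and both $\mu\mapsto h_{\mu}(f)$ and $\mu\mapsto\int\phi\,d\mu$ are affine, the variational principle together with the ergodic decomposition gives
\[
P_{f}(\phi)=\sup_{\mu\ \mathrm{ergodic}}\ell(\mu)=\max\Bigl\{\ \sup_{\mu\in\mathcal{E}}\ell(\mu)\,,\ \ \sup_{\mu\in\mathcal{E}^{c}}\ell(\mu)\ \Bigr\}.
\]
Second, I would use the relative variational principle for the (possibly non-compact) invariant Borel set $H^{c}$ in the sharp form
\[
P_{f}(\phi,H^{c})=\sup\{\,\ell(\mu)\ :\ \mu(H^{c})=1\,\}=\sup_{\mu\in\mathcal{E}^{c}}\ell(\mu).
\]
The inequality ``$\ge$'' is precisely \cite[Theorem~A2.1]{P}, already used inside the proof of Proposition~\ref{pr.hypmeas}; the reverse inequality — that the invariant measures carried by $H^{c}$ already detect the whole relative pressure there — is the step I expect to require the most care, and I would derive it from the relative variational principle for invariant Borel sets (again \cite{P}), using only that $H^{c}$ is invariant and Borel.

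Granting these, the equivalence is short, and only the easy half of the second fact is needed in one direction. If $\phi$ is hyperbolic, i.e. $P_{f}(\phi,H^{c})<P_{f}(\phi)$, choose ergodic $\mu_{n}$ with $\ell(\mu_{n})\to P_{f}(\phi)$; for large $n$ one has $\ell(\mu_{n})>P_{f}(\phi,H^{c})$, so Proposition~\ref{pr.hypmeas} forces $\mu_{n}(H)=1$, hence $\mu_{n}\in\mathcal{E}$ and $\sup_{\mathcal{E}}\ell=P_{f}(\phi)$; combined with \cite[Theorem~A2.1]{P} this reads $\sup_{\mathcal{E}^{c}}\ell\le P_{f}(\phi,H^{c})<P_{f}(\phi)=\sup_{\mathcal{E}}\ell$, the defining inequality of an expanding potential. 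Conversely, if $\phi$ is expanding, i.e. $\sup_{\mathcal{E}^{c}}\ell<\sup_{\mathcal{E}}\ell$, then by the first fact the strictly larger of the two terms is $P_{f}(\phi)$, so $\sup_{\mathcal{E}}\ell=P_{f}(\phi)$ and hence $\sup_{\mathcal{E}^{c}}\ell<P_{f}(\phi)$; by the second fact this says $P_{f}(\phi,H^{c})<P_{f}(\phi)$, i.e. $\phi$ is hyperbolic. (Both implications in fact produce measures in $\mathcal{E}$, so the standing hypothesis that $f$ carries an expanding measure is needed chiefly to keep the definitions non-vacuous and to feed the construction below; alternatively the step can be organised around \eqref{Pressures}.)

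For the ``in particular'', let $\phi$ be H\"older and hyperbolic with $P_{f}(\phi)<\infty$. Any equilibrium state $\mu$ has $\ell(\mu)=P_{f}(\phi)>P_{f}(\phi,H^{c})$, so decomposing $\mu$ into ergodic components and applying Proposition~\ref{pr.hypmeas} to each gives $\mu(H)=1$; it is therefore enough to bound the number of ergodic \emph{expanding} equilibrium states, and here I would run the rest of the argument of \cite{AOS}. On $H$ there is a positive frequency of hyperbolic times with uniform backward contraction, which permits a Varandas--Viana type construction of conformal reference measures for $\phi-P_{f}(\phi)$, the passage to invariant densities with respect to them, and bounded-distortion estimates along inverse branches at hyperbolic times; the resulting Gibbs-like structure confines every ergodic expanding equilibrium state to one of finitely many, one per ergodic piece of the reference measure. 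This analytic part, rather than the measure-theoretic equivalence above, is the substantial work; it is exactly what \cite{AOS} carries out beyond Proposition~\ref{pr.hypmeas}, and under the mild mixing hypotheses it upgrades to uniqueness via \cite[Theorem~2]{RV}, as invoked elsewhere in the present paper.
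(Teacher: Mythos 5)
Your easy direction (hyperbolic $\Rightarrow$ expanding) is exactly the paper's route: it is Proposition~\ref{pr.hypmeas} plus the variational principle, and your finiteness discussion simply defers to the inducing-scheme construction of \cite{AOS}, which is also what the paper does (Theorem~\ref{HYPERZOOM} is stated as a citation of \cite{AOS}[Theorem B], later noted to be a corollary of \cite{S}[Theorem 8.2.1]; the text only sketches the first direction and quotes the rest). The problem is your converse direction. It hinges on the ``sharp form'' of your second bookkeeping fact, namely $P_{f}(\phi,H^{c})=\sup\{h_{\mu}(f)+\int\phi\,d\mu \,:\, \mu(H^{c})=1\}$, which you propose to deduce ``from the relative variational principle for invariant Borel sets, using only that $H^{c}$ is invariant and Borel.'' No such general principle exists: Pesin's Theorem A2.1 (the result actually used in Proposition~\ref{pr.hypmeas}) gives only the inequality $P_{f}(\phi,Z)\geq \sup\{h_{\mu}(f)+\int\phi\,d\mu:\mu(Z)=1\}$ for a non-compact invariant Borel set $Z$, and the reverse inequality fails badly in general --- for instance, the irregular (non-generic) set of a mixing subshift is invariant and Borel, carries no invariant probability measure with full mass, yet has full topological entropy, so its relative pressure strictly exceeds the supremum over measures it carries. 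Hence the step $\sup_{\mathcal{E}^{c}}\ell<P_{f}(\phi)\Rightarrow P_{f}(\phi,H^{c})<P_{f}(\phi)$ is not justified by anything you invoke; an upper bound for $P_{f}(\phi,H^{c})$ must come from a dynamical argument specific to the set $H^{c}$ (absence of positive frequency of hyperbolic times, the zooming/contraction structure on $H$), and that is precisely the non-trivial content of \cite{AOS}[Theorem B] and \cite{S}[Theorem 8.2.1] that this paper cites rather than reproves.

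So, concretely: one half of the equivalence and the structure of the finiteness claim coincide with the paper, but the other half of the equivalence --- the only part where you attempt an independent proof rather than a citation --- rests on an inverse variational principle for the non-compact set $H^{c}$ that is not available from \cite{P} and is false for arbitrary invariant Borel sets. To close the gap you would either have to import the specific estimate on $P_{f}(\phi,H^{c})$ proved in \cite{AOS}/\cite{S}, or give a direct covering/empirical-measure argument (Bowen--Pesin--Pitskel style) tailored to points with zero frequency of hyperbolic times; merely knowing that $H^{c}$ is invariant and Borel, together with \eqref{Pressures}, does not suffice.
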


     With Theorem \ref{HYPERZOOM} we can use the following result for hyperbolic potentials.

       \begin{theorem}(\cite{RV}[Theorem 2])\label{Ramos-Viana} Let $f:M \to M$ be a local homeomorphism and let $\phi : M \to \mathbb{R}$ be a hyperbolic H\"older continuous potential. Then, there exist finitely many ergodic equilibrium states associated with $(f,\phi)$. In addition, the equilibrium state is unique if the pre-orbit $\{f^{-n}(x)\}_{n \geq 0}$ of every point $x \in H$ is dense in $M$.

       \end{theorem}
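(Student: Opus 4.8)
The strategy I would follow is the Ruelle--Perron--Frobenius construction for non-uniformly expanding systems, as developed by Varandas--Viana~\cite{VV} and Alves--Oliveira--Santana~\cite{AOS}, adapted to the hyperbolic potential $\phi$. The first step is a reduction to the expanding set: by~\eqref{Pressures} and Proposition~\ref{pr.hypmeas}, every equilibrium state $\mu$ satisfies $h_{\mu}(f)+\int\phi\,d\mu=P_{f}(\phi)>P_{f}(\phi,H^{c})$, and (passing to ergodic components, which are themselves equilibrium states since metric entropy is affine) is therefore \emph{expanding}, $\mu(H)=1$. Hence $\mu$-almost every point has a positive frequency of $(\sigma,\varepsilon)$-hyperbolic times, and along each hyperbolic time $n$ the inverse branch defined on $V_{n}(x)$ contracts at rate $\sigma^{n-i}$; since $\phi$ is H\"older, this yields the standard bounded-distortion estimate $|S_{n}\phi(y)-S_{n}\phi(z)|\le C_{0}$ for $y,z\in V_{n}(x)$, with $C_{0}$ uniform in $x$ and $n$.

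Next I would build the reference measures. Put $\lambda=e^{P_{f}(\phi)}$ and let $\mathcal{L}_{\phi}$ be the transfer operator $\mathcal{L}_{\phi}g(x)=\sum_{f(y)=x}e^{\phi(y)}g(y)$ (a finite sum, $f$ being a local homeomorphism of a compact space). Weak-$*$ accumulation points of $\lambda^{-n}(\mathcal{L}_{\phi}^{*})^{n}\delta_{p}$ are probability measures $\nu$ fixed by $\lambda^{-1}\mathcal{L}_{\phi}^{*}$, i.e. $(\phi,\lambda)$-conformal measures: $\nu(f^{n}(A))=\lambda^{n}\int_{A}e^{-S_{n}\phi}\,d\nu$ whenever $f^{n}|_{A}$ is injective. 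Combining conformality with the bounded distortion above gives the key Gibbs-type control along hyperbolic times: there are $\gamma>0$ and $K\ge1$ with
\[
K^{-1}\le\frac{\nu(V_{n}(x))}{\exp\!\big(S_{n}\phi(x)-nP_{f}(\phi)\big)}\le K,\qquad \nu\big(B_{\varepsilon}(f^{n}(x))\big)\ge\gamma,
\]
for every hyperbolic time $n$ of $x$. The lower bound $\lambda^{-n}\mathcal{L}_{\phi}^{n}\mathbf{1}\ge c>0$ needed to make the first inequality two-sided is, again, extracted from the hyperbolic-time structure; I expect this estimate to be the most delicate part of the whole argument, exactly as in~\cite{VV,AOS}.

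With the Gibbs estimate in hand, existence and finiteness follow along classical lines. The Ces\`aro averages $\tfrac1n\sum_{j=0}^{n-1}\lambda^{-j}\mathcal{L}_{\phi}^{j}\mathbf{1}$ are uniformly bounded above and below on $\operatorname{supp}\nu$, so an accumulation point $h$ satisfies $\mathcal{L}_{\phi}h=\lambda h$ with $h$ bounded away from $0$ and $\infty$ there, and $\mu=h\nu$ is $f$-invariant; Rokhlin's formula then gives $h_{\mu}(f)=\int\log J_{\mu}f\,d\mu=P_{f}(\phi)-\int\phi\,d\mu$, so $\mu$ is an equilibrium state (ergodic on each ergodic component of $\nu$). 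Conversely, any ergodic equilibrium state is expanding and, using the Gibbs estimate together with the equality in the Ruelle inequality forced by the variational principle, is absolutely continuous with respect to some ergodic $(\phi,\lambda)$-conformal measure, with density bounded above and below on a full-measure set. Distinct ergodic equilibrium states thus arise from mutually singular ergodic conformal measures, each of which charges every $\varepsilon$-ball centred in its support with mass at least $\gamma$; a Vitali-type covering at scale $\varepsilon$ then bounds the number of mutually singular conformal measures, giving \emph{finitely many} ergodic equilibrium states (cf.~\cite{AOS}).

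Finally, under the hypothesis that the pre-orbit of every $x\in H$ is dense in $M$, I would upgrade this to uniqueness. Density of pre-orbits forces $\operatorname{supp}\nu=M$ for every $(\phi,\lambda)$-conformal $\nu$, and, more importantly, it allows one to compare $\mathcal{L}_{\phi}^{n}g$ at arbitrary pairs of points, which is precisely what is needed to show that $\lambda^{-1}\mathcal{L}_{\phi}$ is eventually a contraction in Birkhoff's projective metric on a cone of positive functions with a uniform ``log-H\"older'' bound. The resulting spectral gap yields a unique conformal measure $\nu$ and a unique normalized eigenfunction $h$, hence the unique equilibrium state $\mu=h\nu$. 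The two points I expect to require real work are the two-sided Gibbs estimate of the second step and, here, deriving the cone contraction from the purely topological density hypothesis rather than from uniform hyperbolicity.
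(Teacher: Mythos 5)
This statement is not proved in the paper at all: it is quoted verbatim from \cite{RV} (Theorem 2 there) and used as a black box in Sections 7 and 8, so there is no internal proof to compare your attempt against; the relevant comparison is with Ramos--Viana's original argument. Your overall outline --- reduction to expanding measures via the hyperbolicity of $\phi$ (essentially Proposition~\ref{pr.hypmeas}), construction of $(\phi,\lambda)$-conformal reference measures with a Gibbs-type control on the hyperbolic-time neighbourhoods $V_n(x)$, and finiteness of ergodic equilibrium states from absolute continuity with respect to reference measures --- is consistent with the Varandas--Viana/Ramos--Viana line of proof, although the steps you yourself flag as delicate (the two-sided Gibbs estimate, the identification of the eigenvalue $\lambda$ with $e^{P_f(\phi)}$, and the fact that \emph{every} ergodic equilibrium state is absolutely continuous with respect to a conformal measure) are precisely the substance of \cite{RV} and are only asserted, not proved, in your sketch.

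The genuine gap is in your uniqueness step. You propose to deduce from the density of pre-orbits of points of $H$ that $\lambda^{-1}\mathcal{L}_{\phi}$ is eventually a contraction in a Birkhoff projective metric on a cone of positive functions, and to get uniqueness from the resulting spectral gap. That cone-contraction mechanism is exactly the one in \cite{VV}, and it requires a quantitative hypothesis of low oscillation of $\phi$ (small $\sup\phi-\inf\phi$, or a summability/distortion condition comparing the weights $e^{S_n\phi}$ along \emph{different} inverse branches); topological density of pre-orbits gives no control whatsoever on the ratio of Birkhoff sums along distinct branches, so the projective contraction does not follow, and for a general hyperbolic potential no such spectral gap is available --- removing that analytic hypothesis is the whole point of \cite{RV}, and is also why the present paper can claim uniqueness for potentials with large variation. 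In \cite{RV} (and similarly in \cite{AOS}) uniqueness is obtained measure-theoretically: the dense pre-orbit condition forces the reference measure to charge every open set, each ergodic equilibrium state is equivalent to it on suitable sets seen at hyperbolic times, and distinct ergodic equilibrium states, being mutually singular, would then have to occupy disjoint invariant sets of positive reference measure of uniformly bounded-below size, which is incompatible with full support; no spectral gap is used. Relatedly, your finiteness argument via ``mutually singular conformal measures and a Vitali covering'' is shaky as stated (mutually singular measures can all have full support, and the constant $\gamma$ is not uniform over different conformal measures); the correct bookkeeping counts ergodic components absolutely continuous with respect to one fixed reference measure, as in \cite{AOS}.
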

        
	We observe that in \cite{S} the zooming measures and continuous zooming potentials in the context of exponential contractions are what we call here expanding measures and expanding potentials. Hence, our Theorem \ref{HYPERZOOM} is a corollary of \cite{S}[Theorem 8.2.1]. We state this result in the following: 
	
	\begin{theorem}(\cite{S}[Theorem 8.2.1])\label{HYPER}
	Let $f:M \to M$  a continuous 
        zooming map and  the contraction $(\alpha_{n})_{n}$ satisfying $\alpha_{n}(r) \leq ar$ for some $a \in (0,1)$, every $n \in \mathbb{N}$ and every $r \in [0,+\infty)$ (Lipschitz contractions, for example)  and $\phi : M \to \mathbb{R}$ a continuous potential. Then $\phi$ is a hyperbolic potential if, and only if, it is a zooming potential. In particular, if $\phi$ is hyperbolic and H\"older with finite pressure $P_{f}(\phi)$, there exist finitely many equilibrium states which are zooming measures.
        \end{theorem}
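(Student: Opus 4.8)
The plan is to establish the two implications ``hyperbolic potential $\Rightarrow$ zooming potential'' and ``zooming potential $\Rightarrow$ hyperbolic potential'' directly, as is done for Theorem~\ref{HYPERZOOM}, after observing that under the Lipschitz-contraction hypothesis $\alpha_{n}(r)\le ar$ the zooming set of $f$ is the expanding set $H$ of Section~\ref{versus} (in this regime zooming times are hyperbolic times), the zooming measures are exactly the expanding measures forming $\mathcal{E}$, and being a ``zooming potential'' is precisely the expanding-potential inequality $\sup_{\mu\in\mathcal{E}^{c}}\{h_{\mu}(f)+\int\phi\,d\mu\}<\sup_{\mu\in\mathcal{E}}\{h_{\mu}(f)+\int\phi\,d\mu\}$. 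Since $f$ is assumed to be a zooming map, $\mathcal{E}\neq\emptyset$, so the statement is not vacuous.

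For the first implication I would argue as in the remark following Proposition~\ref{pr.hypmeas}: if $\phi$ is hyperbolic then $P_{f}(\phi,H^{c})<P_{f}(\phi)$; choosing ergodic measures $\mu_{n}$ with $h_{\mu_{n}}(f)+\int\phi\,d\mu_{n}\to P_{f}(\phi)$, for all large $n$ one has $h_{\mu_{n}}(f)+\int\phi\,d\mu_{n}>P_{f}(\phi,H^{c})\ge\sup_{\mu\in\mathcal{E}^{c}}\{h_{\mu}(f)+\int\phi\,d\mu\}$, where the last inequality is the easy half of the variational principle on $H^{c}$ (\cite[Theorem A2.1]{P}). Proposition~\ref{pr.hypmeas} then forces $\mu_{n}\in\mathcal{E}$, so $\sup_{\mu\in\mathcal{E}}\{h_{\mu}(f)+\int\phi\,d\mu\}=P_{f}(\phi)>\sup_{\mu\in\mathcal{E}^{c}}\{h_{\mu}(f)+\int\phi\,d\mu\}$, i.e. $\phi$ is a zooming potential.

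For the converse, suppose $\phi$ is a zooming potential and put $s:=\sup_{\mu\in\mathcal{E}^{c}}\{h_{\mu}(f)+\int\phi\,d\mu\}$. Since $H$ is invariant, ergodic decomposition splits every invariant measure into ergodic pieces each lying in $\mathcal{E}$ or in $\mathcal{E}^{c}$, so the variational principle yields $P_{f}(\phi)=\max\{s,\sup_{\mu\in\mathcal{E}}\{h_{\mu}(f)+\int\phi\,d\mu\}\}=\sup_{\mu\in\mathcal{E}}\{h_{\mu}(f)+\int\phi\,d\mu\}>s$. By \eqref{Pressures}, $\phi$ will be hyperbolic once we show $P_{f}(\phi,H^{c})\le s$. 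This inequality, the hard half of the variational principle on the non-compact invariant set $H^{c}$, is where I expect the main difficulty to lie, and it is the step that uses the zooming structure. I would follow \cite{AOS,S}: for each $\varepsilon>0$ one has $H^{c}\subset\{x:d(x)<\varepsilon\}$, hence $P_{f}(\phi,H^{c})\le P_{f}(\phi,\{d<\varepsilon\})$; then, using the bounded distortion available along zooming times (this is where $\alpha_{n}(r)\le ar$ enters), one estimates the covering sums defining $P_{f}(\phi,\{d<\varepsilon\})$ by separating the at most $\varepsilon n$ zooming instants from the remaining ones and gets $P_{f}(\phi,\{d<\varepsilon\})\le s+\tau(\varepsilon)$ with $\tau(\varepsilon)\to0$ as $\varepsilon\to0$, the term $s$ appearing because measures accumulated from orbits with vanishing frequency of zooming times are non-expanding. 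Letting $\varepsilon\to0$ gives $P_{f}(\phi,H^{c})\le s<P_{f}(\phi)$.

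Finally, for the ``in particular'' clause, if $\phi$ is hyperbolic and H\"{o}lder with finite pressure then by the equivalence it is a zooming potential, and Proposition~\ref{pr.hypmeas} shows that every equilibrium state $\mu$ satisfies $h_{\mu}(f)+\int\phi\,d\mu=P_{f}(\phi)>P_{f}(\phi,H^{c})\ge s$, hence $\mu\in\mathcal{E}$: all equilibrium states are zooming measures. That there are only finitely many ergodic ones I would obtain by the argument of \cite{AOS,S}, which builds conformal reference measures supported on the zooming set and bounds the number of ergodic equilibrium states absolutely continuous with respect to them. I expect the only genuinely delicate points to be the uniform distortion control along zooming times and the passage $\varepsilon\to0$ in the pressure estimate on $\{d<\varepsilon\}$; the rest is a routine adaptation of Section~\ref{versus}.
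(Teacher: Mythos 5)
You should first note that the paper does not prove this statement at all: Theorem \ref{HYPER} is quoted verbatim from \cite{S} (Theorem 8.2.1), and the surrounding text uses it in the opposite direction to yours, namely it observes that for \emph{exponential} contractions the zooming objects of \cite{S} coincide with the expanding objects of Section \ref{versus}, so that Theorem \ref{HYPERZOOM} is a corollary of Theorem \ref{HYPER}. Your opening reduction inverts that relationship, and this is where the genuine gap lies. Under the stated hypothesis $\alpha_{n}(r)\le ar$ (Lipschitz contractions), a zooming time only forces $d(f^{i}(y),f^{i}(z))\le a\,d(f^{n}(y),f^{n}(z))$ at the intermediate steps, i.e.\ a contraction by the fixed factor $a$, not by $\sigma^{n-i}$ as required in the definition of hyperbolic time in Section \ref{versus}. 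Hence zooming times need not be hyperbolic times, the zooming set may be strictly larger than $H$, and zooming measures/potentials are not literally the expanding measures/potentials of $\mathcal{E}$; that identification is only valid for exponential contractions $\alpha_{n}(r)\le a^{n}r$. Consequently your argument, even if completed, would establish the exponential-contraction special case (essentially Theorem \ref{HYPERZOOM}) rather than the theorem as stated.

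Beyond that, the one step you yourself flag as the crux --- the upper bound of the relative pressure on the non-compact invariant set of non-zooming points by $\sup_{\mu\in\mathcal{E}^{c}}\{h_{\mu}(f)+\int\phi\,d\mu\}$, and likewise the finiteness of ergodic equilibrium states via conformal reference measures --- is precisely the content of \cite{S} (and of \cite{AOS} in the hyperbolic-times setting). Deferring it to those references inside a proof of the very theorem being cited is circular; if the goal is a self-contained proof, the covering-sum and distortion estimates along zooming times (where the hypothesis $\alpha_{n}(r)\le ar$ actually enters, since it gives only uniform, not summable, backward contraction) must be carried out, and they are not routine adaptations of Section \ref{versus}. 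As a pointer to why the easy directions work and where the result lives, your sketch is reasonable; as a proof of Theorem \ref{HYPER} it is incomplete and, in its reduction step, incorrect in the stated generality.
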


	\section{Proof of Theorem \ref{hyperbolic}}\label{Rectangles}
	\vspace{0.50cm}
	
	In the work \cite{RS} the authors prove that the map $G$, the projection of $F^{-1}$, is a non-uniformly expanding map and satisfy our assumption of hyperbolic times in section \ref{versus}. We can now prove our Theorem \ref{hyperbolic}.
	
	\begin{proposition}\label{projectiveexpanding}
		If a potential $\phi:\Lambda \longrightarrow \mathbb{R}$ is projective hyperbolic, then its restriction $\varphi=\phi_{|\Omega}$ is an expanding potential (or hyperbolic) with respect to $G$. 
	\end{proposition}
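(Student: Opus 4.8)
The statement to establish, Proposition~\ref{projectiveexpanding}, asserts that conditions $(D_1)$ and $(D_2)$ force $\varphi=\phi_{|\Omega}$ to be an expanding potential for $G$, which by Theorem~\ref{HYPERZOOM} is the same as being a hyperbolic one, since $G$ is a continuous self‑map of the compact set $\Omega$ which carries expanding measures (part of the structure of $G$ established in \cite{RS}). So the plan is to verify the defining inequality of an expanding potential,
\[
\sup_{\nu\in\mathcal{E}^{c}}\Big\{h_{\nu}(G)+\int\varphi\,d\nu\Big\}<\sup_{\nu\in\mathcal{E}}\Big\{h_{\nu}(G)+\int\varphi\,d\nu\Big\},
\]
by bounding the left‑hand side above by $h(G)$ and the right‑hand side below by $h(G)$.

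The bound on the left‑hand side uses only $(D_2)$ together with the elementary fact that the supremum of a sum does not exceed the sum of the suprema: for every $\nu\in\mathcal{E}^{c}$ one has $h_{\nu}(G)+\int\varphi\,d\nu\leq\sup_{\mu\in\mathcal{E}^{c}}h_{\mu}(G)+\sup_{\mu\in\mathcal{E}^{c}}\int\varphi\,d\mu$, and the right side is strictly less than $h(G)$ by $(D_2)$ (this rearrangement is legitimate because $\sup_{\mu\in\mathcal{E}^{c}}h_{\mu}(G)\leq h(G)<\infty$). Taking the supremum over $\nu\in\mathcal{E}^{c}$ gives $\sup_{\nu\in\mathcal{E}^{c}}\{h_{\nu}(G)+\int\varphi\,d\nu\}<h(G)$.

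For the lower bound I would exploit $(D_1)$ through the following fact about measurable coboundaries, which I would prove first: if $\nu$ is a $G$‑invariant \emph{ergodic} probability and $u:\Omega\to\mathbb{R}$ is measurable with $u-u\circ G$ bounded above, then $u-u\circ G\in L^{1}(\nu)$ and $\int(u-u\circ G)\,d\nu=0$. The argument is that $\sum_{k=0}^{n-1}(u-u\circ G)\circ G^{k}=u-u\circ G^{n}$; dividing by $n$ and applying Birkhoff's theorem (first in the form valid for nonnegative functions, to rule out $\int(u-u\circ G)^{-}\,d\nu=\infty$, then for the now‑integrable function), the ergodic averages converge $\nu$‑almost everywhere, while $u/n\to0$ a.e.\ and, since $u\circ G^{n}$ has the same $\nu$‑distribution as $u$ for every $n$, $u\circ G^{n}/n\to0$ in measure; a nonzero limit would force $\nu(|u\circ G^{n}|>n)\to1$, contradicting $\nu(|u\circ G^{n}|>n)=\nu(|u|>n)\to0$. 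Applying this to the $u$ of $(D_1)$ and using $\varphi-(u-u\circ G)\geq0$ yields $\int\varphi\,d\nu\geq0$ for every ergodic $G$‑invariant $\nu$. Hence, by the variational principle for entropy,
\[
P_{\mathrm{top}}(G,\varphi)=\sup_{\nu}\Big\{h_{\nu}(G)+\int\varphi\,d\nu\Big\}\ \geq\ \sup_{\nu\ \mathrm{ergodic}}h_{\nu}(G)\ =\ h(G).
\]

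Finally, since the set of $G$‑invariant probabilities is the disjoint union $\mathcal{E}\sqcup\mathcal{E}^{c}$, the variational principle gives $P_{\mathrm{top}}(G,\varphi)=\max\{\sup_{\mathcal{E}}\{h_{\nu}(G)+\int\varphi\,d\nu\},\ \sup_{\mathcal{E}^{c}}\{h_{\nu}(G)+\int\varphi\,d\nu\}\}$; as the second term is $<h(G)\leq P_{\mathrm{top}}(G,\varphi)$, the maximum is realized by the first, so $\sup_{\mathcal{E}}\{h_{\nu}(G)+\int\varphi\,d\nu\}=P_{\mathrm{top}}(G,\varphi)\geq h(G)>\sup_{\mathcal{E}^{c}}\{h_{\nu}(G)+\int\varphi\,d\nu\}$. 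This is precisely the expanding‑potential inequality, and, combined with Theorem~\ref{HYPERZOOM}, it shows that $\varphi$ is hyperbolic as well. The step I expect to be the main obstacle is the measurable‑coboundary lemma: because $u$ in $(D_1)$ is only measurable (not bounded, not a priori integrable), the vanishing of $\int(u-u\circ G)\,d\nu$ cannot be obtained by a naive telescoping and genuinely needs the distributional argument above; a minor additional point is simply to record from \cite{RS} that $G$ does carry expanding measures, so that Theorem~\ref{HYPERZOOM} is applicable.
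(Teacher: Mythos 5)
Your proposal is correct, and its skeleton coincides with the paper's: bound $\sup_{\nu\in\mathcal{E}^{c}}\{h_{\nu}(G)+\int\varphi\,d\nu\}$ strictly below $h(G)$ by splitting the supremum and invoking $(D_2)$, and use $(D_1)$ to show that integrals of $\varphi$ against suitable invariant measures are nonnegative, so that the supremum over $\mathcal{E}$ is at least $h(G)$. Where you diverge is in how the lower bound is anchored. The paper takes the measure of maximal entropy $\mu$, which is expanding because the null potential is hyperbolic (quoting \cite{RV} and \cite{S}), and simply evaluates $h_{\mu}(G)+\int\varphi\,d\mu\geq h(G)$ at that single measure; it only needs $\varphi\geq u-u\circ G$ on the expanding set $H$, but it asserts $\int\varphi\,d\eta\geq 0$ for expanding measures without justifying why the integral of the merely measurable coboundary vanishes. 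You instead prove $P_{\mathrm{top}}(G,\varphi)\geq h(G)$ from $\int\varphi\,d\nu\geq 0$ for \emph{all} ergodic measures together with the entropy variational principle, and then conclude via the partition of invariant measures into $\mathcal{E}\sqcup\mathcal{E}^{c}$; this avoids invoking the expanding measure of maximal entropy altogether, at the price of using the inequality $\varphi\geq u-u\circ G$ on all of $\Omega$ (which is how $(D_1)$ is stated) rather than only on $H$. Your measurable-coboundary lemma --- that for an ergodic $\nu$ and measurable $u$ with $u-u\circ G$ bounded above one has $u-u\circ G\in L^{1}(\nu)$ with zero integral, proved by the Birkhoff/convergence-in-measure argument --- is exactly the point the paper leaves implicit, and including it makes the argument more complete than the published one; the remaining ingredients (existence of expanding measures for $G$ from \cite{RS}, and the equivalence of expanding and hyperbolic potentials from Theorem~\ref{HYPERZOOM}) are used by both proofs in the same way.
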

	\begin{proof}
		Once every potential with low variation is hyperbolic (or expanding), we have that the null potential is hyperbolic and then there exists a measure of maximal entropy $\mu \in \mathcal{E}$, that is, $h(G) = h_{\mu}(G)$. Since $\varphi$ is a potential such that $\varphi(x) \geq u(x) - u(G(x))$ for some measurable function $u:\Omega \to \mathbb{R}$ and every $x \in H$ we have that  $\int \phi d\eta \geq 0$ for every $\eta \in \mathcal{E}$. It implies that 
		\[
		\sup_{\nu \in \mathcal{E}^{c}}\bigg{\{}h_{\nu}(G) + \int \phi d\nu\bigg{\}} \leq \sup_{\nu \in \mathcal{E}^{c}}\{h_{\nu}(G)\} + \sup_{\nu \in \mathcal{E}^{c}}\bigg{\{}\int \phi d\nu\bigg{\}} < h_{\mu}(G) \leq
		\]
		\[
		h_{\mu}(G) + \int \phi d\mu \implies \sup_{\nu \in \mathcal{E}^{c}}\bigg{\{}h_{\nu}(G) + \int \phi d\nu\bigg{\}} < \sup_{\mu \in \mathcal{E}}\bigg{\{}h_{\mu}(G) + \int \phi d\mu\bigg{\}}.
		\]
		It shows that the potential $\phi$ is expanding (or hyperbolic).  		
	\end{proof}
	
	\begin{corollary}
		If $\phi : \Lambda \to \mathbb{R}$ is a projective hyperbolic potential, then there exists a unique equilibrium state for $G$ associated to the potential $\varphi = \phi_{|\Omega}$.
	\end{corollary}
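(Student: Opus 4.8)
The plan is to obtain the corollary as a direct consequence of Proposition~\ref{projectiveexpanding} combined with the equivalence results of the previous section. First, by Proposition~\ref{projectiveexpanding}, the restriction $\varphi = \phi_{|\Omega}$ is an expanding (equivalently, hyperbolic) potential for $G$. Moreover, condition $(D_{1})$ in Definition~\ref{def.hyperbolic} guarantees that $\varphi$ is H\"older continuous, and the map $G$ possesses an expanding measure (namely the measure of maximal entropy, which lies in $\mathcal{E}$ since the null potential is hyperbolic). Hence Theorem~\ref{HYPERZOOM} applies and tells us that $\varphi$ is a hyperbolic potential with finite pressure, admitting finitely many ergodic equilibrium states, all of which are expanding measures.

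To promote this finiteness to uniqueness, I would invoke Theorem~\ref{Ramos-Viana}. The map $G:\Omega\to\Omega$ is a local homeomorphism and $\varphi$ is a hyperbolic H\"older continuous potential, so the only remaining point is the topological hypothesis: that for every $x$ in the expanding set $H$ the pre-orbit $\{G^{-n}(x)\}_{n\ge 0}$ is dense in $\Omega$. This is exactly the property recorded in the Setup section, namely that $G$ is strongly mixing with density of pre-images of points in the expanding set. Therefore Theorem~\ref{Ramos-Viana} yields a unique equilibrium state $\mu_{\varphi}$ for the system $(G,\varphi)$, which is the assertion of the corollary.

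The substantive step in this chain is Proposition~\ref{projectiveexpanding}, already proved: the cohomological lower bound $\varphi \ge u - u\circ G$ in $(D_{1})$ forces $\int\varphi\,d\eta \ge 0$ for every $\eta\in\mathcal{E}$, so the maximal-entropy element of $\mathcal{E}$ realizes free energy at least $h(G)$, while $(D_{2})$ bounds the free energy of every measure in $\mathcal{E}^{c}$ strictly below $h(G)$; this is precisely what it means for $\varphi$ to be an expanding potential for $G$. Once this is in hand, no further estimates are required: the corollary follows by assembling Proposition~\ref{projectiveexpanding}, Theorem~\ref{HYPERZOOM} and Theorem~\ref{Ramos-Viana}. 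I do not expect a genuine obstacle here, since each ingredient is either proved above or quoted; combined further with Theorem~\ref{transfer}, the same uniqueness transfers to the horseshoe $F$, giving the remaining assertion of Theorem~\ref{hyperbolic}.
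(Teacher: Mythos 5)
Your argument matches the paper's proof: it combines Proposition~\ref{projectiveexpanding} with Theorem~\ref{HYPERZOOM} to conclude that $\varphi$ is a hyperbolic H\"older potential, and then applies Theorem~\ref{Ramos-Viana} using that $G$ is non-uniformly expanding, strongly mixing, with dense pre-orbits of points in the expanding set. The only difference is that you spell out the hypotheses of each quoted theorem in more detail, which is fine.
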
	
	
	\begin{proof}
		By using of the Proposition \ref{projectiveexpanding} the potential $\varphi$ is expanding and by Theorem \ref{HYPERZOOM} it is also  a hyperbolic potential. In this way, since the map $G$  is strongly topologically mixing and non-uniformly expanding, by [\cite{RV}, Theorem 2] we have that there exists a unique equilibrium state for the pair $(G,\varphi)$. 
	\end{proof}
	
        Hence, by Theorem \ref{transfer} we obtain uniqueness for the horseshoe $F$.
            
	\section{Proof of Theorem \ref{ThQ}}\label{Q}
	
	\begin{lemma}
		Let $F : \Lambda \to \Lambda$ be the horseshoe, $\eta$ the unique measure of maximal entropy for the non-uniformly expanding map $G : \Omega \to \Omega$ and $\phi : \Lambda \to \mathbb{R}$ a H\"older potential and $X \in \Lambda$ such that $\sup\phi_{|\Omega} = \phi (X) < P(\phi_{|\Omega}) - h_{\eta}(G)$. Then the restriction of $\phi_{|\Omega}$ is an expanding potential, that is, it is a hyperbolic potential.
	\end{lemma}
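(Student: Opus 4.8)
The plan is to show that the restriction $\varphi:=\phi_{|\Omega}$ is an \emph{expanding potential} for $G$ in the sense of Section~\ref{versus}, and then to invoke Theorem~\ref{HYPERZOOM} to upgrade this to hyperbolicity; this last move is legitimate because $G$ carries an expanding measure, namely the maximal entropy measure $\eta\in\mathcal{E}$ (the null potential being hyperbolic). Throughout I write $h:=h_{\eta}(G)=h_{\mathrm{top}}(G)$, which is finite (it equals $\log\frac{1+\sqrt{5}}{2}$); since $\eta$ is a maximal entropy measure, $h_{\mu}(G)\le h$ for \emph{every} $G$-invariant probability $\mu$ on $\Omega$.

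First I would record a crude upper bound on the non-expanding side: for any $G$-invariant probability $\mu$ one has $h_{\mu}(G)+\int\varphi\,d\mu\le h+\sup\varphi=h+\phi(X)$, and the hypothesis $\phi(X)<P(\phi_{|\Omega})-h$ immediately yields
\[
\sup_{\mu\in\mathcal{E}^{c}}\Bigl\{h_{\mu}(G)+\int\varphi\,d\mu\Bigr\}\ \le\ h+\phi(X)\ <\ P(\varphi).
\]
Next I would identify the expanding side with the pressure. Since the expanding set $H$ is a $G$-invariant Borel set, every ergodic $G$-invariant measure belongs to $\mathcal{E}$ or to $\mathcal{E}^{c}$; combining this dichotomy with the ergodic decomposition and the variational principle $P(\varphi)=\sup_{\nu}\{h_{\nu}(G)+\int\varphi\,d\nu\}$ gives
\[
P(\varphi)=\max\Bigl\{\ \sup_{\nu\in\mathcal{E}}\Bigl\{h_{\nu}(G)+\int\varphi\,d\nu\Bigr\},\ \sup_{\nu\in\mathcal{E}^{c}}\Bigl\{h_{\nu}(G)+\int\varphi\,d\nu\Bigr\}\ \Bigr\}.
\]
By the previous bound the second entry is $<P(\varphi)$, so the first must equal $P(\varphi)$. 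Putting the two together, $\sup_{\mathcal{E}^{c}}\{h_{\nu}(G)+\int\varphi\,d\nu\}<P(\varphi)=\sup_{\mathcal{E}}\{h_{\nu}(G)+\int\varphi\,d\nu\}$, which is precisely the definition of an expanding potential; Theorem~\ref{HYPERZOOM} then gives that $\varphi$ is hyperbolic as well, which is the asserted conclusion.

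I expect the second step to be the only delicate point. One cannot get $\sup_{\mathcal{E}}\{h_{\nu}(G)+\int\varphi\,d\nu\}=P(\varphi)$ just by testing against $\eta$, since $\int\varphi\,d\eta\le\sup\varphi$ gives only $h+\int\varphi\,d\eta\le h+\phi(X)$ — the same bound that already controls the bad side — so one really has to use that the pressure of $\varphi$ is carried by expanding measures once it is known \emph{not} to be carried by non-expanding ones, which is where the invariance of $H$ and the strict inequality from the first step enter. Everything else reduces to the entropy bound $h_{\mu}(G)\le h_{\eta}(G)$ and the trivial bound $\int\varphi\,d\mu\le\sup\varphi$, the hypothesis having been arranged exactly so that these suffice.
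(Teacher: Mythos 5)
Your proposal is correct and follows essentially the same route as the paper: bound the free energy of non-expanding measures by $h_{\eta}(G)+\sup\phi_{|\Omega}$, use the hypothesis to make this strictly smaller than $P(\phi_{|\Omega})$, conclude the pressure is realized over expanding measures so that $\phi_{|\Omega}$ is an expanding potential, and then apply Theorem~\ref{HYPERZOOM} to get hyperbolicity. The only difference is cosmetic: the paper runs the same computation for the normalized potential $\phi_{|\Omega}-P(\phi_{|\Omega})$ and leaves the identification $P=\sup_{\nu\in\mathcal{E}}\{h_{\nu}(G)+\int\cdot\,d\nu\}$ implicit, whereas you spell that step out.
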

	\begin{proof}
		The existence and uniqueness of the measure of maximal entropy $\eta$ is given by Ramos-Viana \cite{RV} because Santana \cite{S} gives that the null potential is zooming (expanding), then hyperbolic. So, denoting by $\mathcal{E}$ the set of expanding measures, since $\sup \phi_{|\Omega} = \phi(X) < P(\phi_{|\Omega}) - h_{\eta}(G)$, we have
		\[
		\sup_{\nu \in \mathcal{E}^{c}}\Bigg{\{} h_{\nu}(G) + \int (\phi_{|\Omega} - P(\phi_{|\Omega}))  d\nu \Bigg{\}} < h_{\eta}(G) + \sup_{\nu \in \mathcal{E}^{c}}\Bigg{\{} \int (\phi_{|\Omega} - \phi(X) - h_{\eta}(G))  d\nu \Bigg{\}} = 
		\]
		\[
		h_{\eta}(G) + \sup_{\nu \in \mathcal{E}^{c}}\Bigg{\{} \int (\phi_{|\Omega} - \phi(X))  d\nu \Bigg{\}}  - h_{\eta}(G) \leq  0 = 
		\]
		\[
		P(\phi_{|\Omega} - P(\phi_{|\Omega})) = \sup_{\nu \in \mathcal{E}}\Bigg{\{} h_{\nu}(G) + \int (\phi_{|\Omega} - P(\phi_{|\Omega}))  d\nu \Bigg{\}}.
		\]
		Hence, $\phi_{|\Omega} - P(\phi_{|\Omega})$ is expanding and so is $\phi_{|\Omega}$. Then, $\phi_{|\Omega}$ is hyperbolic. Ramos-Viana gives a unique equilibrium state because $G$ is topologically exact and the pre-images of points in the expanding set are dense.
	\end{proof}
	\begin{lemma}\label{cohomologous}
		Let $F : \Lambda \to \Lambda$ be the horseshoe, $\eta$ the unique measure of maximal entropy for the non-uniformly expanding map $G : \Omega \to \Omega$ and a H\"older potential such that it holds that $\sup \phi_{|\Omega} = 0 = \phi_{|\Omega}(Q)  < P(\phi_{|\Omega})$. Then, there exists a H\"older potential $\varphi : M \to \mathbb{R}$ such that $\sup \varphi_{|\Omega}  < P(\varphi_{|\Omega}) - h_{\eta}(G)$ and $\varphi_{|\Omega}$ is cohomologous to $\phi_{|\Omega}$.
	\end{lemma}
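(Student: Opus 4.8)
The plan is to obtain $\varphi$ by subtracting from $\phi_{|\Omega}$ a H\"older coboundary for $G$ that lowers its supremum to the level of the maximal ergodic average, and then to extend the resulting function to a H\"older potential on $M$. Since cohomologous potentials over $G$ have the same topological pressure and the same equilibrium states, this both produces the required $\varphi$ and (in the proof of Theorem~\ref{ThQ}) transfers uniqueness back to $\phi_{|\Omega}$. No further normalization is needed: the hypothesis already gives $\sup\phi_{|\Omega}=0=\phi_{|\Omega}(Q)<P(\phi_{|\Omega})$.

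First I would pin down the maximal ergodic average $\beta(\phi_{|\Omega}):=\sup\{\int\phi_{|\Omega}\,d\mu:\mu\in\mathcal{M}_G(\Omega)\}$. Because $\phi_{|\Omega}\le 0$ on $\Omega$ we have $\int\phi_{|\Omega}\,d\mu\le 0$ for every $G$-invariant $\mu$, while the fixed point $Q\in\Omega$ of $G$ gives $\int\phi_{|\Omega}\,d\delta_Q=\phi_{|\Omega}(Q)=0$; hence $\beta(\phi_{|\Omega})=0$ and $\delta_Q$ is a maximizing measure. This is exactly where the hypothesis that the supremum of $\phi_{|\Omega}$ is realized at the \emph{fixed point} $Q$ (rather than somewhere else in $\Omega$) enters: it makes the ground state explicit. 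Next I would invoke a Ma\~n\'e-type (ergodic optimization) lemma: since $G:\Omega\to\Omega$ is non-uniformly expanding with the hyperbolic-time structure established in \cite{RS} and $\phi_{|\Omega}$ is H\"older, for every $\varepsilon>0$ there is a H\"older function $u=u_\varepsilon:\Omega\to\mathbb{R}$ with
\[
\phi_{|\Omega}+u-u\circ G\ \le\ \beta(\phi_{|\Omega})+\varepsilon\ =\ \varepsilon \qquad\text{on }\Omega,
\]
the function $u$ being built from truncated suprema of Birkhoff sums $\sum_{k=0}^{n-1}\phi_{|\Omega}\circ G^{k}$ along backward orbits, normalized by $\beta(\phi_{|\Omega})$, with the uniform backward contraction at hyperbolic times controlling both its boundedness and its H\"older modulus (this is cleaner than using the transfer-operator eigenfunction, whose logarithm is not H\"older in the non-uniformly expanding setting). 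Choosing $\varepsilon$ small enough that $\varepsilon<P(\phi_{|\Omega})-h_\eta(G)$ (which is possible as soon as $P(\phi_{|\Omega})>h_\eta(G)=h_\eta(G)+\beta(\phi_{|\Omega})$, a pressure gap I would extract from the standing hypotheses on $\phi$), I set $\varphi_{|\Omega}:=\phi_{|\Omega}+u_\varepsilon-u_\varepsilon\circ G$. By a H\"older extension theorem (McShane/Tietze type) this extends to a H\"older potential $\varphi:M\to\mathbb{R}$, and since $\varphi_{|\Omega}$ is cohomologous to $\phi_{|\Omega}$ over $G$ we get $P(\varphi_{|\Omega})=P(\phi_{|\Omega})$, whence
\[
\sup\varphi_{|\Omega}\ \le\ \varepsilon\ <\ P(\phi_{|\Omega})-h_\eta(G)\ =\ P(\varphi_{|\Omega})-h_\eta(G),
\]
which is the assertion.

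I expect the main obstacle to be the Ma\~n\'e-type step, namely producing a genuinely \emph{H\"older} (not merely continuous) subaction $u_\varepsilon$ for the \emph{non-uniformly} expanding map $G$: continuity of a subaction is classical, but H\"older regularity forces one to use the positive-frequency hyperbolic times of $G$ to recover enough uniform backward contraction, and this is tight precisely on the non-expanding part of $\Omega$ (around $Q$ and the invariant slice $\{y=0\}$), where $G$ is not expanding in the central-stable direction. A secondary point, to be dealt with separately, is the verification that $P(\phi_{|\Omega})>h_\eta(G)$ under the hypothesis $\sup\phi_{|\Omega}=\phi_{|\Omega}(Q)<P(\phi_{|\Omega})$; once this pressure gap is in hand, the construction above and the first lemma of this section combine to give the hyperbolicity of $\varphi_{|\Omega}$, hence of $\phi_{|\Omega}$, and then uniqueness via \cite{RV}.
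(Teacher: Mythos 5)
Your plan hinges on a step that cannot hold: you defer, as ``a secondary point, to be dealt with separately,'' the verification of the pressure gap $P(\phi_{|\Omega})>h_{\eta}(G)$, but under the hypotheses this gap is false. Since $\sup\phi_{|\Omega}=0$ we have $P(\phi_{|\Omega})\leq h_{\mathrm{top}}(G)+\sup\phi_{|\Omega}=h_{\eta}(G)$ (recall $\eta$ is the measure of maximal entropy), so $P(\phi_{|\Omega})-h_{\eta}(G)\leq 0$; indeed the paper's own proof of this lemma begins by deducing $P(\phi_{|\Omega})=h_{\eta}(G)$. Consequently there is no $\varepsilon>0$ with $\varepsilon<P(\phi_{|\Omega})-h_{\eta}(G)$, and your choice of $\varepsilon$ in the Ma\~n\'e-type step is vacuous. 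The obstruction is in fact structural and independent of how good a subaction you can build: because $Q$ is a fixed point of $G$, any potential of the form $\varphi_{|\Omega}=\phi_{|\Omega}+u-u\circ G$ (with $u$ finite at $Q$) satisfies $\varphi_{|\Omega}(Q)=\phi_{|\Omega}(Q)=0$, hence $\sup\varphi_{|\Omega}\geq 0\geq P(\varphi_{|\Omega})-h_{\eta}(G)$, so the strict inequality you are aiming for is unreachable by any coboundary modification whatsoever. Thus even granting the genuinely delicate ingredient you flag (a H\"older subaction for the non-uniformly expanding map $G$, which is a substantial construction in its own right and in any case only yields $\phi_{|\Omega}+u-u\circ G\leq\varepsilon$ rather than a bound strictly below $0$), the argument cannot close.

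For comparison, the paper's proof avoids ergodic optimization entirely: it takes the explicit transfer function $u=-t\phi$, so $\varphi=(1-t)\phi+t\,\phi\circ F^{-1}$, and manipulates auxiliary constants $s,t$ close to $1$ to force $\sup\varphi_{|\Omega}<P(\varphi_{|\Omega})-h_{\eta}(G)$, after first asserting $P(\phi_{|\Omega})=h_{\eta}(G)$. You should be aware, however, that this explicit choice also satisfies $\varphi_{|\Omega}(Q)=0$, so the same tension you run into appears there (the displayed strict inequality degenerates at $X=Q$ once $P(\phi_{|\Omega})=h_{\eta}(G)$). In other words, the difficulty you encountered is not an artifact of your route via subactions; but as written your proposal does not prove the lemma, because the inequality it needs is incompatible with $\sup\phi_{|\Omega}=0=\phi_{|\Omega}(Q)$ at the fixed point.
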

	\begin{proof}
		Once $Q$ is a fixed point, we have that
		\[
		\sup \phi_{|\Omega} = 0 = \phi(Q) = h_{\delta_{Q}}(G) + \int \phi_{|\Omega} d\delta_{Q} \leq P(\phi_{|\Omega}) \implies P(\phi_{|\Omega}) = h_{\eta}(G).
		\]
		Define for $t \in \mathbb{R}$ and $u = -t\phi$
		\[
		\varphi(x) = (1-t)\phi(x) + t \phi(F^{-1}(x)) = \phi(x) + (-t\phi)(x) - (-t\phi(F^{-1}(x))) = \phi(x) + u(x) - u(F^{-1}(x)).
		\]
		We claim that there exists $t \in \mathbb{R}$ such that $\sup \varphi_{|\Omega}  < P(\varphi_{|\Omega}) - h_{\eta}(G)$. Since  it holds that $\sup \phi_{|\Omega} = 0 = \phi_{|\Omega}(Q)  < P(\phi_{|\Omega}) = h_{\eta}(G)$, we can take $0 < t < 1 \approx s$ such that it holds that $h_{\eta}(G) < sP(\phi_{|\Omega}), s < 1 + (t-1) \sup  \phi_{|\Omega}/P(\phi_{|\Omega}) $ and for every $x \in \Omega$
		\[
		t\phi(F^{-1}(x)) = t\phi(G(x))\leq t\sup \phi_{|\Omega}\leq 0 < sP(\phi_{|\Omega}) - h_{\eta}(G),  (1-t) \sup \phi_{|\Omega}  < (1-s)P(\phi_{|\Omega}).
		\]
		So,
		\[
		\varphi_{|\Omega}(x) = (1-t)\phi_{|\Omega}(x) + t \phi_{|\Omega}(G(x)) < (1-s)P(\phi_{|\Omega}) + sP(\phi_{|\Omega}) - h_{\eta}(G) = 
		\]
		\[
		P(\phi_{|\Omega}) - h_{\eta}(G) = P(\varphi_{|\Omega}) - h_{\eta}(G),
		\]
		because $\phi$ and $\varphi$ are cohomologous and have same pressures. Then $\sup \varphi_{|\Omega}  < P(\varphi_{|\Omega}) - h_{\eta}(G)$ as we wished.
	\end{proof}
	By Lemma 1 $\varphi_{|\Omega}$ is hyperbolic or expanding and it means that it has uniqueness of equilibrium state. Once by Lemma 2 $\phi_{|\Omega}$ is cohomologous to $\varphi_{|\Omega}$, $\phi_{|\Omega}$ also has uniqueness of equilibrium state.

    \begin{remark}
      We observe that if $\phi_{|\Omega}(Q) \neq 0$ we can take the potential $\varphi = \phi - \phi_{|\Omega}(Q)$ and the result follows.
    \end{remark}
    
	\begin{lemma}
		We have either every H\"older potential $\varphi$ such that $\sup \varphi_{|\Omega} < P(\varphi_{|\Omega})$ has uniqueness of equilibrium state or there exists a H\"older potential $\phi$ such that $\phi_{|\Omega}(Q)< \sup \phi_{|\Omega} < P(\phi_{|\Omega})$ and $\phi$ does not have uniqueness of equilibrium states.
	\end{lemma}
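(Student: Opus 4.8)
The plan is to prove the dichotomy by contraposition on its first alternative. Suppose the first alternative fails, i.e.\ there is a H\"older potential $\psi$ with $\sup\psi_{|\Omega}<P(\psi_{|\Omega})$ that does \emph{not} admit a unique equilibrium state. I claim that $\phi:=\psi$ already witnesses the second alternative, so the whole content of the proof is to upgrade the trivial inequality $\psi_{|\Omega}(Q)\le\sup\psi_{|\Omega}$ to the strict one $\psi_{|\Omega}(Q)<\sup\psi_{|\Omega}$.

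First I would record that $Q=(0,0,0)\in\Omega$ is a fixed point, so $\psi_{|\Omega}(Q)\le\sup\psi_{|\Omega}$ always holds; hence either $\psi_{|\Omega}(Q)<\sup\psi_{|\Omega}$ (which is what we want) or $\psi_{|\Omega}(Q)=\sup\psi_{|\Omega}$. I then rule out the equality case: if $\sup\psi_{|\Omega}=\psi_{|\Omega}(Q)<P(\psi_{|\Omega})$, then $\psi$ is exactly in the situation covered by Theorem~\ref{ThQ} (equivalently, by Lemma~\ref{cohomologous} together with the first lemma of this section): one produces a potential cohomologous to $\psi_{|\Omega}$ with supremum strictly below $P(\cdot)-h_{\eta}(G)$, hence hyperbolic, hence with a unique equilibrium state by Theorem~\ref{Ramos-Viana}; since cohomologous potentials have the same equilibrium states, $\psi_{|\Omega}$ has a unique equilibrium state for $G$, and then Theorem~\ref{transfer} (or Theorem~\ref{hyperbolic}) transfers uniqueness to the horseshoe $F$. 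This contradicts the choice of $\psi$, so the equality case is impossible and $\psi_{|\Omega}(Q)<\sup\psi_{|\Omega}$.

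Combining the two steps, $\phi:=\psi$ satisfies $\phi_{|\Omega}(Q)<\sup\phi_{|\Omega}<P(\phi_{|\Omega})$ and does not have uniqueness of equilibrium state, which is precisely the second alternative. Since the negation of the first alternative forces the second, the dichotomy holds. The only point requiring care is that the quoted uniqueness results genuinely apply in the borderline regime $\sup\psi_{|\Omega}=\psi_{|\Omega}(Q)$: one must check that the parameter $t\in(0,1)$ in the proof of Lemma~\ref{cohomologous} can still be selected when $\sup\psi_{|\Omega}=0$, which is immediate because the argument only uses the strict inequality $0=\sup\psi_{|\Omega}=\psi_{|\Omega}(Q)<P(\psi_{|\Omega})=h_{\eta}(G)$ and the density of pre-images for $G$; no new estimates are needed beyond what is already established in Section~\ref{Q}.
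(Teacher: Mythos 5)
Your argument is correct, and its key step is exactly the paper's opening move: assuming the first alternative fails for some H\"older $\psi$ with $\sup\psi_{|\Omega}<P(\psi_{|\Omega})$ and no uniqueness, you rule out the borderline case $\sup\psi_{|\Omega}=\psi_{|\Omega}(Q)$ by invoking the already-established first part of Theorem~\ref{ThQ} (the two lemmas of Section~\ref{Q}, with Theorem~\ref{transfer} to pass between $G$, $F^{-1}$ and $F$), so that $\psi_{|\Omega}(Q)<\sup\psi_{|\Omega}$ and $\psi$ itself witnesses the second alternative; this is precisely what the paper does in its first paragraph via Lemma~\ref{cohomologous}. Where you diverge is that you stop there, while the paper goes on to manufacture a further witness: it sets $X=\bigcup_{\mu\neq\delta_Q}\operatorname{supp}\mu$, perturbs to $\phi(x)=(1+t\,d(x,X))\varphi(x)$ with $t<0$, and checks that $\phi$ is H\"older, that $P(\phi_{|\Omega})=P(\varphi_{|\Omega})$, that the equilibrium states of $\varphi$ survive (none of them is $\delta_Q$ since $\sup\varphi_{|\Omega}<P(\varphi_{|\Omega})$), and that $\phi_{|\Omega}(Q)<\sup\phi_{|\Omega}$. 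Logically your shortcut is sound: once the strict inequality is known for $\psi$, no construction is needed, so your proof is a leaner version of the paper's argument. What the explicit perturbation buys the paper is a concrete witness whose value at $Q$ can be pushed strictly below $\sup\varphi_{|X}$, at the cost of extra verifications (it implicitly needs $d(Q,X)>0$, and its closing count of ``at least 3'' equilibrium states is more than non-uniqueness provides) which your route avoids entirely; and since the uniqueness statement you quote and the equivalence of uniqueness for $F$ and $G$ are proved before this lemma, there is no circularity in your use of them.
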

	\begin{proof}
		Let us suppose that not every H\"older potential $\varphi$ like that has uniqueness of equilibrium state. We can suppose that $\varphi_{|\Omega} \geq 0$. Also, $\varphi_{|\Omega}$ does not have uniqueness. By Lemma \ref{cohomologous}, we have either $\sup \varphi_{|\Omega} \geq P(\varphi_{|\Omega})$ or $\varphi_{|\Omega}(Q) < \sup \varphi_{|\Omega}$. Once $\sup \varphi_{|\Omega} < P(\varphi_{|\Omega})$ by hypothesis, we then have $\varphi_{|\Omega}(Q) < \sup \varphi_{|\Omega}$
		
		Since $\sup \varphi_{|\Omega} < P(\varphi_{|\Omega})$, denote by $\text{supp} \mu$ the support of the measure $\mu$. Consider the following union
		\[
		X : = \bigcup_{\mu \,\, \text{invariant} \neq \delta_{Q}} \text{supp} \mu.
		\]
		Take $t < 0$ such that $(1 + td(Q,X))\varphi(Q) < \sup\varphi_{|X}$ and define
		\[
		\phi(x) := (1 + td(x,X))\varphi(x) \implies \phi_{|\Omega}(Q) = (1 + td(Q,X))\varphi_{|\Omega}(Q)< \sup\varphi_{|X} \leq \sup\varphi_{|\Omega} < P(\varphi_{|\Omega}).
		\]
		We used that $d(x,X) = 0$ for every $x \in X$ and $\phi_{|X} = \varphi_{|X}$. Also, we have that for $\mu \neq \delta_{Q}$ an invariant measure 
		\[
		\int\phi_{|\Omega} d\mu = \int_{\text{supp}\mu}\phi_{|\Omega} d\mu = \int_{\text{supp}\mu} (1 + d(x,X))\varphi_{|\Omega}(x) d\mu(x) = \int\varphi_{|\Omega} d\mu \implies P(\varphi_{|\Omega}) =  P(\phi_{|\Omega}),
		\]
		because
		\[
		h_{\delta_{Q}}(G) + \int \phi_{|\Omega} d\delta_{Q} = \phi_{|\Omega}(Q) < P(\varphi_{|\Omega}) = \sup_{\mu}\Bigg{\{}h_{\mu}(G) + \int \varphi_{|\Omega} d\mu\Bigg{\}} = 
		\]
		\[
		\sup_{\mu}\Bigg{\{}h_{\mu}(G) + \int \phi_{|\Omega} d\mu\Bigg{\}} = P(\phi_{|\Omega}).
		\]
		Hence, $\phi_{|\Omega}(Q) < P(\varphi_{|\Omega}) = P(\phi_{|\Omega})$. Also, once $\phi_{|X} \equiv \varphi_{|X}$, we have $\phi_{|\Omega}(Q) < \sup \varphi_{|X} = \sup \phi_{|X} \leq \sup \phi_{|\Omega}$.
		It means that $\mu$ is an equilibrium state for $\phi$ and since $\varphi$ has at least $3$ equilibrium states, $\phi$ does not have uniqueness of equilibrium state. Moreover, $\phi$ is H\"older because $\varphi$ and $d(x,X)$ are H\"older.
	\end{proof}
	It means that among the H\"older potentials $\varphi$ such that $\sup \varphi_{|\Omega} < P(\Omega)$ either all of them has uniqueness or the condition $\varphi_{|\Omega}(Q) =  \sup \varphi_{|\Omega}$ is needed to have uniqueness.

	\section{Examples}

	\subsection{Examples of Admissible Potentials}\label{Example}
	
	In this section it will be given examples of  admissible potentials with respect to the horseshoe map $F$.
	
	Firstly, we present some preliminary results and definitions used to get the conditions  $(C_{1})$ and $(C_{2})$.
	
	Let $n$ be positive integer and denote the words
	\[
	b_{n}=\underbrace{0\dots01}_{n\,\textrm{times}}\!\!\!.
	\]
	Given a sequence $w\in [1]$ and a positive integer $i$, we denote its \textit{i-segment} by
	\[
	[w]_{i}: = w_{0}w_{1} \dots w_{i-1}.
	\]
	If $\sigma^{i}(w)\in [1]$, so the \textit{i-segment} of $w$ is the concatenation of words $b_{n}$, that is,
	\[
	1b_{n_{1}}b_{n_{2}} \dots b_{n_{r}}.
	\]
	We denote the cardinally of the words $b_{n}$ in the $i$-segment of $w$ by $a(w,i,n)$. Note that if $w\in\Sigma_{\alpha}$, then it is a concatenation of infinite words of the type $b_{n}$.
	\begin{lemma}\label{low}
		Let $\tau<\alpha$ and $N= \lfloor \frac{1}{\alpha-\tau} \rfloor$ be fixed constants. Then 
		\begin{equation}\label{5}
			1 + \sum^{N-1}_{k=1} a(w,i,k)\geq\tau(i+1),
		\end{equation}
		for $w\in \Sigma_{i}$.
	\end{lemma}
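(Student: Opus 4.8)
The plan is to prove \eqref{5} by a counting (pigeonhole) argument applied to the block decomposition of the initial word $[w]_i$.

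First I would unpack what membership $w\in\Sigma_i$ gives. By definition $\rho(w)=i$, hence $w_{i-1}=1$ and $d_i^+(w)>\alpha$; equivalently, the number of indices $0\le k\le i-1$ with $w_k=1$ is strictly larger than $\alpha i$. Since $w_0=1$ as well and the word $11$ is forbidden, $[w]_i$ starts and ends with the symbol $1$ and decomposes canonically as a concatenation $1\,b_{n_1}b_{n_2}\cdots b_{n_r}$ of words $b_n=0\cdots01$. Each factor $b_{n_j}$ accounts for exactly one symbol $1$ (its last letter), the remaining one being the leading symbol, so the number of $1$'s in $[w]_i$ equals $1+\sum_{n\ge 1}a(w,i,n)$. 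Together with the previous count this gives
\[
1+\sum_{n\ge 1}a(w,i,n)\;>\;\alpha i .
\]

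Next I would exploit that the factors $b_{n_j}$ tile the suffix $w_1\cdots w_{i-1}$ of length $i-1$, so that $\sum_{n\ge 1}n\,a(w,i,n)=i-1\le i$. Keeping only the long blocks and using that each surviving index satisfies $n\ge N$,
\[
\sum_{n\ge N}a(w,i,n)\;\le\;\frac1N\sum_{n\ge N}n\,a(w,i,n)\;\le\;\frac iN .
\]
Subtracting this from the count of the previous display and splitting $\sum_{n\ge1}=\sum_{k=1}^{N-1}+\sum_{n\ge N}$ gives
\[
1+\sum_{k=1}^{N-1}a(w,i,k)\;=\;\Bigl(1+\sum_{n\ge1}a(w,i,n)\Bigr)-\sum_{n\ge N}a(w,i,n)\;>\;\alpha i-\frac iN\;=\;i\Bigl(\alpha-\frac1N\Bigr).
\]
Finally, $N$ is the largest integer with $N\le\frac1{\alpha-\tau}$, so the long-block loss $i/N$ is comparable to $i(\alpha-\tau)$ and $i\bigl(\alpha-\frac1N\bigr)$ is, up to $O(1)$, at least $\tau i$; combining this with the built-in slack of \eqref{5} — the additive constant $1$ on the left and the factor $i+1$ rather than $i$ on the right — closes the estimate and yields $1+\sum_{k=1}^{N-1}a(w,i,k)\ge\tau(i+1)$.

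The pigeonhole step is routine once the decomposition is in place; the delicate part is the bookkeeping of the $O(1)$ endpoint terms — how the leading symbol of $[w]_i$, the rounding implicit in the inequality $d_i^+(w)>\alpha$, and the floor in the definition of $N$ combine so that the constant $1$ on the left and the replacement of $i$ by $i+1$ on the right exactly absorb the losses. I would handle this by writing out the block decomposition of $[w]_i$ explicitly, keeping the leading symbol $1$ separate from the factors $b_{n_j}$, and carrying all constants through the displays above.
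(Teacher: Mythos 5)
Your overall strategy---count the $1$'s block by block, use the length identity for the tiling of $[w]_i$ by the words $b_n$, and control the number of long blocks by pigeonhole---is the same as the paper's, which runs the identical computation in contrapositive form as a proof by contradiction. But your final step contains a genuine gap, not just endpoint bookkeeping. From $N=\lfloor\tfrac{1}{\alpha-\tau}\rfloor$ you get $N\le\tfrac{1}{\alpha-\tau}$, hence $\tfrac1N\ge\alpha-\tau$, i.e.\ $\alpha-\tfrac1N\le\tau$. So your lower bound $1+\sum_{k=1}^{N-1}a(w,i,k)>i\bigl(\alpha-\tfrac1N\bigr)$ is in general \emph{weaker} than $\tau i$, and the deficit $\tau(i+1)-i\bigl(\alpha-\tfrac1N\bigr)=\bigl(\tau-\alpha+\tfrac1N\bigr)i+\tau$ grows linearly in $i$ whenever $\tfrac{1}{\alpha-\tau}$ is not an integer (and is still at least $\tau>0$ even when it is). The additive slack of $1$ on the left and of $i+1$ versus $i$ on the right cannot absorb a loss of order $i$, so the estimate does not close as written.

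The inequality that actually saves the argument is the strict one coming from the other side of the floor, $N+1>\tfrac{1}{\alpha-\tau}$, i.e.\ $(N+1)(\alpha-\tau)>1$, and to use it you must weight each block $b_k$ by $k+1$ rather than $k$: in the paper's bookkeeping $b_k$ carries $k$ zeros and a terminal $1$, so the tiling identity is $1+\sum_{k}(k+1)\,a(w,i,k)=i+1$ (the paper's identity \eqref{2}), not $\sum_k k\,a(w,i,k)=i-1$. Then every block with $k\ge N$ occupies at least $N+1$ symbols, so the number of long blocks is at most $\tfrac{i+1}{N+1}$, and $\alpha-\tfrac{1}{N+1}>\tau$ strictly, which is exactly the margin you need. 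The paper implements this by contradiction: assuming \eqref{5} fails and subtracting from the count of $1$'s gives $\sum_{k\ge N}a(w,i,k)>(\alpha-\tau)(i+1)$, whence $\sum_{k\ge N}(k+1)a(w,i,k)\ge(N+1)\sum_{k\ge N}a(w,i,k)>(N+1)(\alpha-\tau)(i+1)>i+1$, contradicting the length identity. So your idea is the right one, but with your weights ($b_n$ of length $n$, divisor $N$) the pigeonhole constant is on the wrong side of $\tfrac{1}{\alpha-\tau}$; you need the $k+1$ weighting and the divisor $N+1$ to conclude.
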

	\begin{proof}
		By contradiction, assume that
		\begin{equation}\label{1}
			1 + \sum^{N-1}_{k=1} a(w,i,k)<\tau(i+1).
		\end{equation}
		Note that the number of digits 1's in $[w]_{i}$ is given by
		\[
		1 + \sum^{s}_{k=1} a(w,i,k),
		\]
		where $s$ is the largest number $n$ such that $b_{n}$ is in  $[w]_{i}$, and it is possible that some terms of the sum are null. Since $\rho(w)=i$ we have
		\begin{equation}\label{3}
			1 + \sum^{s}_{k=1} a(w,i,k) > \alpha(i+1).
		\end{equation}
		On the other hand
		\begin{equation}\label{2}
			1 + \sum^{s}_{k=1}(k+1) a(w,i,k) = (i+1).
		\end{equation}
		Then
		\[
		a(w,i,N) + a(w,i,N+1) + ... + a(w,i,s)
		>(\alpha-\tau)(i+1),
		\]
		we conclude that
		\begin{eqnarray}
			\sum^{s}_{k= N}(k+1)a(w,i,k) & \geq & (N+1) \left( \sum^{s}_{k= N}a(w,i,k) \right) \nonumber \\
			&>& (N+1)(\alpha-\tau)(i+1) \nonumber \\
			&>& (i+1), \nonumber
		\end{eqnarray}
		which contradicts the expression \eqref{2}.
	\end{proof}
	A way to understand the Lemma \ref{low} is that, if $i$ is a $\alpha$-return time of a sequence $w$ to [1], then the words $b_{1},\dots,b_{N}$ represent at least a part $\tau i$ of $[w]_{i}$.
	
	Given a point $X=(x,y,z)$ in $\Lambda$, such that $h(X)=w$, we use the follow notation
	\[
	\Phi_{[w]_{n}} (y) = f_{w_{n-1}}\circ \dots \circ f_{w_{0}}(y),
	\]
	and also the inverse case
	\[
	\Phi_{[w]_{n}^{-}} (y) = f_{w_{-1}}\circ \dots \circ f_{w_{-n}}(y).
	\]
	The next result gives us non-uniform contraction
	\begin{lemma}\label{l.LOR}[\cite{LOR}, Lemma 3.1]
		Let $w\in \Sigma_{11}^{+}$ be sequence with infinitely many 1's. Assume $w_{0}=1$.
		Let $n_{0},n_{1},\dots$ be the successive positions of the symbol 1 in w. Then, there exist a sequence
		of positive real numbers $(\delta_{j})_{j\geq0}$ and a positive real number $C$ such that the following holds.
		\begin{itemize}
			\item[(i)] C depends only on $n_{0}$.
			\item[(ii)] Each $\delta_{j}$ depends only on $n_{i}$, for  $i\leq j$ and belongs to the $[0,\sigma]$.
			\item[(iii)] For every $i>0$ and for every $y\in[0,1]$,
			\[
			|\Phi^{'}_{[w]_{n_{i}}}(y)|\leq C_{n_{0}}\prod^{i-1}_{j=1}\theta_{j}
			\]
			where $\theta_{j}=\frac{1-\frac{\delta_{j}}{\sigma}}{1-\delta_{j}}$.
		\end{itemize}
	\end{lemma}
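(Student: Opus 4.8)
The plan is to unfold $\Phi_{[w]_{n_i}}$ according to the block structure of $[w]_{n_i}$ and reduce everything to a one–dimensional estimate governed by the explicit form of $f$. Since the transition $1\to1$ is forbidden in $\Sigma_{11}$, the word reads $[w]_{n_i}=1\,0^{m_0}\,1\,0^{m_1}\cdots 1\,0^{m_{i-1}}$ with $m_j:=n_{j+1}-n_j-1\ge 1$, so, writing $f_0:=f$ and $f_1(y):=\sigma(1-y)$,
\[
\Phi_{[w]_{n_i}}=f_0^{\,m_{i-1}}\circ f_1\circ f_0^{\,m_{i-2}}\circ f_1\circ\cdots\circ f_0^{\,m_0}\circ f_1 .
\]
From $f_0^{\,m}(y)=\frac{y}{e^{-m}+(1-e^{-m})y}$ one reads off $(f_0^{\,m})'(y)=\frac{e^{-m}}{(e^{-m}+(1-e^{-m})y)^{2}}$, the identity $e^{-m}+(1-e^{-m})y=y/f_0^{\,m}(y)$, and the multiplicativity relation $\frac{1-f_0^{\,m}(y)}{f_0^{\,m}(y)}=e^{-m}\,\frac{1-y}{y}$; with $f_1'\equiv-\sigma$ these are the only analytic inputs needed.

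I would then track the orbit of an arbitrary $y\in[0,1]$: set $Q_0:=y$ and, for $j\ge 0$, $u_j:=f_1(Q_j)=\sigma(1-Q_j)$, $Q_{j+1}:=f_0^{\,m_j}(u_j)$, so that $\Phi_{[w]_{n_i}}(y)=Q_i$ and, crucially, every $u_j$ lies in $[0,\sigma]$ because $f_1$ maps $[0,1]$ into $[0,\sigma]$. A direct computation gives $\bigl|(f_0^{\,m_j}\circ f_1)'(Q_j)\bigr|=\frac{e^{-m_j}}{\sigma}\cdot\frac{Q_{j+1}^{2}}{(1-Q_j)^{2}}$, hence by the chain rule
\[
\bigl|\Phi'_{[w]_{n_i}}(y)\bigr|=\frac{e^{-\sum_{j<i}m_j}}{\sigma^{\,i}}\cdot\frac{\prod_{k=1}^{i}Q_k^{2}}{\prod_{j=0}^{i-1}(1-Q_j)^{2}} .
\]
Passing to the coordinate $t_j:=Q_j/(1-Q_j)$ turns the recursion into $t_{j+1}=\sigma e^{m_j}/(1+t_j-\sigma)$, and an elementary telescoping collapses the last display to
\[
\bigl|\Phi'_{[w]_{n_i}}(y)\bigr|=\frac{1+t_0}{1+t_0-\sigma}\cdot\Bigl((1+t_0)\,\frac{t_1}{1+t_1-\sigma}\Bigr)\cdot\frac{t_i}{(1+t_i)^{2}}\cdot\prod_{k=2}^{i-1}\frac{t_k}{1+t_k-\sigma},
\]
where one uses $\frac{t_k}{1+t_k-\sigma}=\frac{1-u_k/\sigma}{1-u_k}$, which is precisely the shape of the claimed factor $\theta_k=\frac{1-\delta_k/\sigma}{1-\delta_k}$. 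So the contraction factors are forced upon us by the algebra, and they are genuinely $<1$ since $\sigma<1$.

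It remains to choose the $\delta_j$ and to control the three boundary factors. For the first: $u_{j-1}\le\sigma$ and $f_0^{\,m_{j-1}}$ is increasing and never reaches its attracting fixed point $1$ in finite time, so $Q_j=f_0^{\,m_{j-1}}(u_{j-1})\le f_0^{\,m_{j-1}}(\sigma)<1$, whence $u_j\ge\sigma\bigl(1-f_0^{\,m_{j-1}}(\sigma)\bigr)=:\delta_j\in(0,\sigma)$, a number depending only on $m_{j-1}=n_j-n_{j-1}-1$, i.e.\ only on $n_0,\dots,n_j$; since $v\mapsto\frac{1-v/\sigma}{1-v}$ is decreasing on $[0,\sigma)$ this gives $\frac{t_k}{1+t_k-\sigma}\le\theta_k<1$, which is (ii), and reduces (iii) to bounding the first three factors in the last display by a constant $C$ as in (i). This last point is the real obstacle: $f_0$ is expanding near its repelling fixed point $0$ with local rate as large as $e^{m_j}$, so no single block is contracting, and one must see that $\frac{1+t_0}{1+t_0-\sigma}\le\frac1{1-\sigma}$, that $(1+t_0)\,\frac{t_1}{1+t_1-\sigma}$ stays bounded because $(1+t_0)Q_1=\frac{\sigma}{e^{-m_0}+(1-e^{-m_0})u_0}$ is controlled by the first block alone, and that $t_i/(1+t_i)^{2}\le\tfrac14$ — the divergence of $(1+t_0)$ as $Q_0$ approaches $1$ being cancelled by the resulting smallness of $t_1$. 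For the precise bookkeeping of the constant $C$ I would follow \cite{LOR}.
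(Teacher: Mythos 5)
The paper does not actually prove this lemma: it is quoted verbatim from [LOR, Lemma 3.1], so there is no internal argument to compare yours against. On its own terms your derivation is correct and essentially complete, and it is the natural proof: the explicit formula $f^m(y)=y/(e^{-m}+(1-e^{-m})y)$, the block decomposition $1\,0^{m_0}1\,0^{m_1}\cdots$, the chain-rule identity $|(f_0^{m_j}\circ f_1)'(Q_j)|=\tfrac{e^{-m_j}}{\sigma}\,Q_{j+1}^2/(1-Q_j)^2$, the projective coordinate $t_j=Q_j/(1-Q_j)$ with $t_{j+1}=\sigma e^{m_j}/(1+t_j-\sigma)$, and the identification $\tfrac{t_k}{1+t_k-\sigma}=\tfrac{1-u_k/\sigma}{1-u_k}$ with $u_k=\sigma(1-Q_k)\in[0,\sigma]$ all check out, as does the choice $\delta_j=\sigma\bigl(1-f_0^{m_{j-1}}(\sigma)\bigr)$, which gives $u_j\ge\delta_j$ and hence each middle factor $\le\theta_j<1$, with $\delta_j$ depending only on the $j$-th gap. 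Your closing deferral to [LOR] for the "bookkeeping of $C$" is also unnecessary: the three boundary bounds you already state, $\tfrac{1+t_0}{1+t_0-\sigma}\le\tfrac{1}{1-\sigma}$, $(1+t_0)\tfrac{t_1}{1+t_1-\sigma}\le\tfrac{\sigma e^{m_0}}{1-\sigma}$ (since $(1-Q_1)(1+t_1-\sigma)=1-\sigma(1-Q_1)\ge1-\sigma$ and $(1+t_0)Q_1\le\sigma e^{m_0}$), and $t_i/(1+t_i)^2\le\tfrac14$, finish the proof.

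The one point you should make explicit is what your constant actually depends on. Your $C$ necessarily involves $e^{m_0}$, i.e.\ the length of the \emph{first} block, hence the position of the \emph{second} symbol $1$; this is not an artifact of your bookkeeping but unavoidable, since already for $i=1$ and $y$ near $1$ one has $|\Phi'_{[w]_{n_1}}(y)|\approx\sigma e^{m_0}$, which is unbounded over words with $w_0=1$. So item (i) as transcribed in this paper (where $w_0=1$ forces $n_0=0$, making $C$ absolute) cannot be read literally; it is consistent with your proof, and with [LOR], only under the reading that $n_0$ indexes the first return position to the symbol $1$ (equivalently, that $C$ is allowed to depend on the first block). The same indexing issue explains why your contraction product starts at $k=2$ while the statement's starts at $j=1$: in your labelling the first block is absorbed into $C$ and the last block yields the factor $\le 1/4$, which matches the count of $i-1$ factors in the statement once the $n_j$ are shifted by one. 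With that clarification made, your argument proves the lemma in the form in which it is actually used later in the paper (Remark 6.1 and Proposition 6.1).
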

	\begin{remark}\label{contracSIF}
		The Lemma \ref{l.LOR} is obtained by using the contraction provided by the block of the form 0...01. Since the factors of the product in $(iii)$ are all strictly less than 1, if for a block $b_{n}$ it appears $k$ times in the segment $[w]_{n_{i}}$, we have that
		\[
		|\Phi^{'}_{[w]_{n_{i}}}(y)|\leq C_{n_{0}} \theta^{k},
		\]
		where $\theta$ depends only of $b_{n}$.
	\end{remark}
	Given points $X$ and $Y$ in $\Lambda$, we use the follow norm
	\begin{equation}\label{6}
		||X-Y|| = ||X-Y||_{s} + ||X-Y||_{c} + ||X-Y||_{u},
	\end{equation}
	where $||X-Y||_{\ast}$ is the distance with respect to direction $\ast$ for $\ast\in \{s,c,u\}$.
	
	By the hyperbolic behavior of $F$ on the stable and unstable directions and using the distance  \eqref{6}, given two points $X, Y \in \Lambda$ such that $h(X),h(Y)\in [w_{-n+1}\dots w_{n-1}]$ we have
	\begin{equation}\label{stableunstable}
		||X-Y||_{s}\leq \lambda^{n}   \    \     \      \       \      \      \     \   \textrm{e}    \    \     \      \       \      \      \     \  ||X-Y||_{u}\leq \beta^{-n}.
	\end{equation}
	Now we are able to start the result of this section.
	\begin{proposition}\label{Example Prop.}
		Let $c_{0}\in (5/6,1)$ be a constant and $\mathcal{Q}_{c_{0}}=\{X=(x,y,z) \in \Lambda \ ; \  z\leq c_{0}\}$. If $\phi:\Lambda\rightarrow \mathbb{R}$ is a H\"{o}lder  continuous potential constant on $\mathcal{Q}_{c_{0}}$ with $\sup \phi = \phi(Q)$, then there exists a non-empty open interval $I\subset \mathbb{R}$ such that for every $t\in I$ the potential $\phi_{t}:=t\phi$ is admissible. 
	\end{proposition}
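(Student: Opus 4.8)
The plan is to exhibit a non-empty open interval $I\ni 0$ so that for every $t\in I$ the potential $\phi_{t}=t\phi$ satisfies conditions $(C_{1})$ and $(C_{2})$ of Definition~\ref{def.admissible}; an explicit description of $I$ will fall out of the verification of $(C_{2})$.

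\textbf{Reducing $(C_{1})$ to $t=1$.} Put $\varphi=\phi\circ h^{-1}$ and $\Psi=\varphi_{\rho}\circ\Pi$. Since $(t\varphi)_{\rho}\circ\Pi=t\,\Psi$ and $\mathrm{Var}_{k}(t\Psi)=|t|\,\mathrm{Var}_{k}(\Psi)$, once $\Psi$ is known to be locally H\"older the potential $t\phi$ satisfies $(C_{1})$ for \emph{every} $t\in\mathbb{R}$ (the case $t=0$ being trivial). So the substance of $(C_{1})$ is to prove that the $\Psi$ attached to $\phi$ is locally H\"older, and this is where the hypotheses on $\phi$ are used. If $c_{0}\ge\frac{5}{6}+\frac{1}{6\beta_{1}}$, then every point of $R_{1}\cap\Lambda$ has $z$-coordinate in $[\frac{5}{6},\frac{5}{6}+\frac{1}{6\beta_{1}}]\subset[0,c_{0}]$, so $\phi\equiv\phi(Q)$ on $\Lambda$ and $\Psi$ is constant on each alphabet symbol $[D^{k_{0}}_{i_{0}}]$; thus we may assume $c_{0}<\frac{5}{6}+\frac{1}{6\beta_{1}}$.

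\textbf{The variation estimate.} Fix a symbol $D^{k_{0}}_{i_{0}}$ of $S$. Any $\hat w$ with $\hat w_{0}=D^{k_{0}}_{i_{0}}$ has $\rho(\Pi\hat w)=k_{0}$ with $[\Pi\hat w]_{k_{0}}$ prescribed, so for $\hat w,\hat s$ in a common cylinder $[i_{-k+1}\dots i_{k-1}]$,
\[
\Psi(\hat w)-\Psi(\hat s)=\sum_{\substack{0\le j<k_{0}\\ (\Pi\hat w)_{j}=1}}\big(\varphi(\sigma^{j}\Pi\hat w)-\varphi(\sigma^{j}\Pi\hat s)\big),
\]
because when $(\Pi\hat w)_{j}=0$ the points $h^{-1}(\sigma^{j}\Pi\hat w),h^{-1}(\sigma^{j}\Pi\hat s)$ sit in $R_{0}\subset\mathcal{Q}_{c_{0}}$, where $\phi\equiv\phi(Q)=\sup\phi$. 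For a surviving $j$ both points lie in $R_{1}$; sharing $2k-1$ symbols forces $\sigma^{j}\Pi\hat w$ and $\sigma^{j}\Pi\hat s$ to agree on at least $(k_{0}-1-j)+2(k-1)$ coordinates forward and at least $j+2(k-1)$ backward. By \eqref{stableunstable} their stable coordinates differ by $\lesssim\lambda^{\,j+2(k-1)}$ and their unstable ($z$-)coordinates by $\lesssim\beta^{-((k_{0}-1-j)+2(k-1))}$ (with $\beta=\min\{\beta_{0},\beta_{1}\}$), while the central coordinates are controlled by the non-uniform contraction of Lemma~\ref{l.LOR}/Remark~\ref{contracSIF} along the common forward itinerary (which begins with a $1$). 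As $\phi$ is $\gamma$-H\"older, each surviving term is $\lesssim\lambda^{\gamma(j+2(k-1))}+\beta^{-\gamma((k_{0}-1-j)+2(k-1))}+(\text{central}_{j})^{\gamma}$; summing over $0\le j<k_{0}$, the first two pieces are $\le C\lambda^{2\gamma(k-1)}$ and $\le C\beta^{-2\gamma(k-1)}$, \emph{uniformly in} $k_{0}$, since $\sum_{i\ge0}\lambda^{\gamma i}$ and $\sum_{i\ge0}\beta^{-\gamma i}$ converge. The main obstacle is the central piece: one must show the factors $\theta_{\ell}$ of Lemma~\ref{l.LOR} accumulate geometrically along \emph{any} itinerary — a long block $b_{n}$ contributing a factor that is itself exponentially small in $n$, which holds because the central maps are the time-one map attracted to $P$ at exponential rate — so that $\sum_{j}(\text{central}_{j})^{\gamma}\le Ca^{k}$ uniformly in $k_{0}$ too; Lemma~\ref{low} helps control the blocks appearing in $[\Pi\hat w]_{k_{0}}$. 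Altogether $\mathrm{Var}_{k}(\Psi)\le Ca^{k}$ for some $a\in(0,1)$, so $\Psi$ is locally H\"older and $(C_{1})$ holds for all $t$.

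\textbf{Condition $(C_{2})$ and the interval.} Because $\phi(F^{m}X)\le\sup\phi=\phi(Q)$ with $Q$ fixed, $\sup_{X}\phi_{n}(X)/n=\phi(Q)$ for all $n$; hence for $t>0$, condition $(C_{2})$ with $n=1$ reads $P_{top}(t\phi)>t\phi(Q)$, and for $t<0$ it reads $P_{top}(t\phi)>t\inf_{X}\phi_{n}(X)/n$ for some $n$. The measure $\delta_{Q}$ gives $P_{top}(t\phi)\ge t\phi(Q)$, and for any $F$-invariant $\mu$ with $\int\phi\,d\mu<\phi(Q)$,
\[
P_{top}(t\phi)\ge h_{\mu}(F)+t\!\int\!\phi\,d\mu>t\phi(Q)\ \Longleftrightarrow\ t<\frac{h_{\mu}(F)}{\phi(Q)-\int\phi\,d\mu};
\]
since $h_{\mu_{\max}}(F)=h_{top}(F)=\log\frac{1+\sqrt5}{2}>0$ and $\mu_{\max}(\hat{\Lambda})=1$ (Remark~\ref{Remark1}), this produces a genuinely positive number $t^{+}$ (equal to $+\infty$ precisely when $\phi\equiv\phi(Q)$), and a symmetric argument produces $t^{-}<0$. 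Equivalently, and more simply, $(C_{2})$ is an open condition in $C^{0}$ satisfied at $\phi_{0}=0$ since $P_{top}(0)=h_{top}(F)>0$ (Remark~\ref{Remark2}), which already yields an open interval about $0$. Taking $I=(t^{-},t^{+})$: for every $t\in I$ the potential $\phi_{t}$ satisfies $(C_{2})$ by construction and $(C_{1})$ by the reduction above, hence is admissible. This proves the Proposition, the promised interval being $I=(t^{-},t^{+})$.
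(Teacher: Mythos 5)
Your reduction of $(C_{1})$ to the case $t=1$, the vanishing of the symbol-$0$ terms, and the stable/unstable estimates via \eqref{stableunstable} are fine, and your handling of $(C_{2})$ (openness near $t=0$, or the threshold $t^{+}=\sup_{\mu\neq\delta_{Q}} h_{\mu}(F)/(\phi(Q)-\int\phi\,d\mu)$, essentially the paper's $t_{1}$) does prove the literal existence of an open interval; note, though, that the paper deliberately takes $I=(t_{0},t_{1})$ with $t_{0}$ chosen so that $\mathrm{Var}(\phi_{t})\geq h_{top}(F)/2$, since the whole point of the example is to produce admissible potentials \emph{without} small variation, which your interval around $0$ does not capture. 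The genuine gap, however, is in the central-direction step of your variation estimate. Your key claim --- that along \emph{any} itinerary each block $b_{n}$ contributes a central factor which is exponentially small in $n$ ``because the central maps are attracted to $P$ at exponential rate'' --- is false. The time-one map $f$ is repelling along the fiber of $Q$: from $f^{n}(y)=y/\bigl(y(1-e^{-n})+e^{-n}\bigr)$ one gets $(f^{n})'(y)=e^{-n}/\bigl(y(1-e^{-n})+e^{-n}\bigr)^{2}$, which equals $e^{n}$ at $y=0$; accordingly Lemma~\ref{l.LOR} only yields factors $\theta_{j}=\frac{1-\delta_{j}/\sigma}{1-\delta_{j}}<1$ that are \emph{not} uniformly bounded away from $1$ and approach $1$ for long blocks (this non-uniform central contraction is precisely the difficulty of the family, and the source of the phase transition in Theorem~\ref{t.LOR}(4)). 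Consequently, in your sum over the symbol-$1$ positions $j$ inside the $0$th alphabet symbol --- whose number grows linearly with $k_{0}$ --- the central terms are only bounded by a constant times $\theta^{\gamma(k-1)}$ with no decay in $j$, so your estimate gives at best $k_{0}\,\theta^{\gamma(k-1)}$, which is not uniform in $k_{0}$, and $\mathrm{Var}_{k}(\Psi)\leq Ca^{k}$ does not follow.

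What is missing is exactly the use of the hypothesis $c_{0}>5/6$, which your argument never invokes: since $\phi$ is constant on $\mathcal{Q}_{c_{0}}$, only the visits to the thin slab $\{z>c_{0}\}\subset R_{1}$ contribute to the difference, and (as in the paper, via $m(c_{0})=\max\{k:\beta_{0}^{k}\beta_{1}(c_{0}-5/6)>1/6\}$) any such visit forces the ensuing block of $0$'s to have length at most $m$, hence by Remark~\ref{contracSIF} each contributing return carries a contraction factor $\gamma<1$ depending only on $b_{m}$. This per-return geometric decay, combined with the factor $\theta^{n}$ coming from the $n$ shared preceding symbols (Lemma~\ref{low} plus Lemma~\ref{l.LOR}), gives the bound $C_{N}\theta^{n}\gamma^{k}$ of \eqref{est.1} and makes the sum over the unboundedly many returns inside a single inducing symbol converge uniformly in its length. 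Without restricting to the returns into $\{z>c_{0}\}$ and exploiting the short-block mechanism they force, the uniform-in-$k_{0}$ bound --- the heart of $(C_{1})$ --- is not established, so the proof as written has a real gap.
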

	\begin{proof}
		First, note that given $5/6<c_{0}<1$, we take $m=m(c_{0})$ by 
		\[
		m=\max\{k \, ; \ \beta_{0}^{k}\beta_{1}(c_{0}-5/6)> 1/6\}.
		\]
		So, for every $X\in\Lambda \cap R_{1} - \mathcal{Q}_{c_{0}}$ we have $h(X)\in [1\underbrace{0\dots0}_{m}1]$.
		Now, consider the potential $\varphi = \phi\circ h^{-1}$ and its induced $\varphi_{\rho}$ with respect to the inducing scheme of section \ref{Construction of I.E} . Given sequences $w$ and $v$ in $\Sigma_{11}$ such that $\Pi^{-1}(w)$ and $\Pi^{-1}(w)$ are in the cylinder $[D^{l_{-n+1}}_{i_{-n+1}}\dots D^{l_{0}}_{i_{0}}\dots D^{l_{n-1}}_{i_{n-1}}]$ of spaces $S^{\mathbb{Z}}$, so $w$ and  $v$ belong to
		\[
		D^{l_{0}}_{i_{0}}=[\overbrace{\overbrace{10\dots 01}^{n_{1}}\dots 10\dots 01}^{n_{r}}],
		\]
		where the numbers $n_{1},\dots, n_{r}$ are the positions of the digit 1.
		Let  $X_{0}=h^{-1}(w)$ and $Y_{0}=h^{-1}(v)$ be points in $\Lambda$ and $n_{r_{k}}$ the $k$th return of points $X_{0}$ and $Y_{0}$ to the set  $\Lambda \cap (R_{1} - \mathcal{Q}_{c_{0}})$, then we have  \begin{eqnarray}\label{somaHolder}
			|\varphi_{\rho}(w)-\varphi_{\rho}(v)| & \leq & \sum_{i=0}^{l_{0}-1} |\phi(F^{i}(X_{0}))-\phi(F^{i}(Y_{0}))|  \\
			&=& C\sum_{k=1}^{\bar{r}} ||F^{n_{r_{k}}}(X_{0}))-F^{n_{r_{k}}}(Y_{0}))||^{\xi}, \nonumber
		\end{eqnarray}
		where $\bar{r}$ is number of return of the points $X_{0}$ and $Y_{0}$ to the set $\Lambda \cap (R_{1} - \mathcal{Q}_{c_{0}})$.
		Note that the terms  of the sum \eqref{somaHolder} related to the points in $\mathcal{Q}_{c_{0}}$, are equal to zero, since $\phi$ is constant in this set. Moreover, by  \eqref{stableunstable}, it is enough to analyze the central direction.
		By using Lemma \ref{low}, we have that in each segment $[w]_{s_{n}}$, where  $s_{n}=\sum_{k=1}^{n-1} l_{-k}$, the words  $b_{1},b_{2},\dots,b_{N}$ appear at least with frequency $\tau s_{n}$. Then, by
		Lemma \ref{l.LOR} and Remark \ref{contracSIF}, we conclude
		\begin{eqnarray}\label{IV}
			|\Phi^{'}_{[w]_{s_{n}}^{-}}(y)|\leq
			C_{N} \theta^{n},
		\end{eqnarray}
		for $y\in [0,1]$ and constants  $C_{N}>0$ and $\theta\in (0,1)$ that depends only on the words $b_{1},b_{2},\dots,b_{N}$. Therefore, let $X^{\star}$ and $Y^{\star}$ be points in $\Lambda$  such that  $F^{s_{n}}(X^{\star})=X_{0}$ and $F^{s_{n}}(Y^{\star})=Y_{0}$, by the Lemma \ref{l.LOR} and the inequality  \eqref{IV} we have
		\begin{eqnarray}\label{est.1}
			||F^{n_{r_{k}}}(X_{0})-F^{n_{r_{k}}}(Y_{0})||_{c} & = & ||F^{s_{n}+n_{r_{k}}}(X^{\star}) - F^{s_{n}+n_{r_{k}}}(Y^{\star})||_{c}  \nonumber \\
			& \leq & |\Phi_{[\sigma^{n_{r_{k}}}(w)]_{s_{n}+n_{r_{k}}}^{-}}^{'}|\\
			& \leq & C_{N}\theta^{n} \, \prod_{j=1}^{r_{k}} \theta_{n_{j}} \nonumber \\
			& \leq & C_{N}\theta^{n} \, \gamma^{k} , \nonumber
		\end{eqnarray}
		where $1\leq k \leq \bar{r}$ and the constant $\gamma\in(0,1)$ depends only on word $b_{m}$. Hence, by using \eqref{est.1} in the expression \eqref{somaHolder}, we have
		\[
		|\varphi_{\rho}(w)-\varphi_{\rho}(v)|\leq \tilde{C}\Theta^{n},
		\]
		where $\Theta=\max\{\lambda,\beta, \theta\}$ and $\tilde{C}=C.C_{N}.\Upsilon$, for $\Upsilon=\sum_{k=0}^{\infty}\gamma^{k}$. Therefore, the potential $\phi$ satisfies the condition $(C_{1})$.
		
		Consider the family of the potentials  $\phi_{t}:\Lambda\rightarrow \mathbb{R}$ given by
		\[
		\phi_{t}(X)=t \phi(X),
		\]
		where $t\in\mathbb{R}$. By using an analogous argument 
		we can show that $\phi_{t}$ satisfies the condition $(C_{1})$ for every $t\in \mathbb{R}$. We denote the topological pressure of $\phi_{t}$ by $P(t)$. Note that the function  $t\mapsto P(t)$ is convex, so it is also a continuous function on $t\in\mathbb{R}$, since $h_{top}(F)>0$. Consider the set
		\[
		\varDelta=\{\vartheta ; \ t\in [0,\vartheta) \, , \, P(t)> t\phi(Q)\}.
		\]
		It is a non-empty set, since  $P_{top}(0)>0$.
		The supremum  of $\Delta$ is given by
		\[
		t_{1}=\sup_{\stackrel{\mu\in\mathcal{M}_{F}(\Lambda)}{\mu\neq \delta_{Q}}} \left\lbrace  \frac{h_{\mu}(F)}{\phi(Q)-\int \phi d \mu}\right\rbrace.
		\]
		Since the maximal entropy measure of $F$ is not the Dirac measure $\delta_{Q}$, we have
		\[
		t_{1}>\frac{h_{top}(F)}{\phi(Q)-\int \phi \, d\mu_{max}}.
		\] 
		By hypothesis on the potential $\phi$, its variation is given by
		
		\[
		Var(\phi_{t})=t\phi(Q)-t\inf\phi.
		\]
		Then, we take
		\[
		t_{0}=\sup \left\lbrace t \, ; \ Var(\phi_{t})<\frac{h_{top}(F)}{2} \right\rbrace.
		\]
		Since 
		\[
		t_{1}>\frac{h_{top}(F)}{\phi(Q)-\inf\phi },
		\]
		the interval $I=(t_{0}\, ,\, t_{1})$ is non-empty. Note that $Var(\phi_{t})\geq \frac{h_{top}(F)}{2}$, that is, $\phi_{t}$ does not have small variation.
		
		Moreover, by construction, for every $t\in (t_{0}\, , \, t_{1})$ the potential $\phi_{t}$ satisfies the condition $(C_{2})$ for $n=1$. 
		
		Therefore, for every $t\in (t_{0}\, , \, t_{1})$ the potential $\phi_{t}$ is an admissible potential.
	\end{proof}
	
	\subsection{Examples of Projective Hyperbolic Potentials}\label{Example Hyperbolic}

	As examples of projective hyperbolic potentials, we have the null one and the set of coboundaries. We will give other examples in this section. We need to find   potentials $\phi$ such that
	\begin{itemize}
		\item[$(D_{1})$] The restriction $\varphi:=\phi_{|\Omega}$ is H\"{o}lder and there exists a measurable function $u:\Omega \to \mathbb{R}$ such that  $\varphi \geq u - u \circ G$.
		\item[$(D_{2})$] If $\mathcal{E}$ denotes the set of $G$-invariant expanding measures on $\Omega$, we have that
		\[
		\sup_{\nu \in \mathcal{E}^{c}}\bigg{\{}\int \varphi d\nu\bigg{\}} < h(G) - \sup_{\nu \in \mathcal{E}^{c}}\{h_{\nu}(G)\}.
		\]  
	\end{itemize}
	
	\begin{example}\label{big}
		Let $u$ be a H\"{o}lder  function such that $\sup u - \inf u > h(G)$. Let $v$ be a H\"{o}lder  function such that $ v > u$ and $\sup (v - u \circ G)  < h(G) - \sup_{\nu \in \mathcal{E}^{c}}\{h_{\nu}(G)\}$. It implies that
		\[
		u - u \circ G < v - u \circ G \leq \sup (v - u \circ G)  < h(G) - \sup_{\nu \in \mathcal{E}^{c}}\{h_{\nu}(G)\}.
		\]
		By taking $\phi  = v - u \circ G$, we get a potential which is projective admissible.
	\end{example}
	
	In order to obtain projective hyperbolic potentials $\phi$ with big variation, that is, such that $\sup \phi - \inf \phi > h(F)$, we use the fact that $h(F) = h(G)$ (see \cite{RS}) and Example \ref{big}. We take a coboundary $u - u \circ F$ with big variation and, since the H\"{o}lder  functions are dense in the space of continuous functions, we can take $\phi$ such that $\phi \geq u - u \circ G$ and they are close enough to have the integrals close enough. Since the integrals of $u - u \circ G$ are all zero, we can take $\phi$ such that $\sup_{\nu \in \mathcal{E}^{c}}\{\int \phi d\nu\} < h(G) - \sup_{\nu \in \mathcal{E}^{c}}\{h_{\nu}(f)\}$. This finishes the proof.
	
	\bibliographystyle{amsplain}

\end{document}